\newtheorem{theorem}{Theorem}[section]
\newtheorem{corollary}[theorem]{Corollary}
\newtheorem{remark}[theorem]{Remark}
\newtheorem{example}[theorem]{Example}
\newtheorem{lemma}[theorem]{Lemma}
\newtheorem{proposition}[theorem]{Proposition}
\newtheorem{conjecture}[theorem]{Conjecture}
\newtheorem*{corollary*}{Corollary}
\newtheorem*{theorem*}{Theorem}
\newtheorem*{remark*}{Remark}
\newtheorem*{example*}{Example}
\newtheorem*{lemma*}{Lemma}
\newtheorem*{axiom*}{Axiom}
\newtheorem*{proposition*}{Proposition}
\newtheorem*{conjecture*}{Conjecture}
\theoremstyle{definition}
\newtheorem{definition}[theorem]{Definition}
\newtheorem*{definition*}{Definition}
\newcommand{\R}{\mathbb{R}}
\newcommand{\C}{\mathbb{C}}
\newcommand{\Z}{\mathbb{Z}}
\newcommand{\g}{\mathfrak{g}}
\newcommand{\p}{\mathfrak{p}}
\renewcommand{\o}{\mathsf{O}}
\renewcommand{\O}{\mathcal{O}}
\newcommand{\n}{\mathcal{N}}
\newcommand{\SF}{\mathcal{S}}
\newcommand{\Pin}{\text{Pin}}
\newcommand{\Hom}{\text{Hom}}
\newcommand{\GL}{\text{GL}}
\newcommand{\Spec}{\text{Spec}}
\newcommand{\Tw} {\widetilde{W}}
\newcommand{\Tsig}
{\widetilde{\sigma}}
\newcommand{\Min}{\text{Min}}
\newcommand{\Irr}{\text{Irr}}
\newcommand{\Irrg}{\text{Irr}_{\text{\footnotesize gen}}}
\newcommand{\Ind}{\text{Ind}}
\newcommand{\Res}{\text{Res}}
\newcommand{\sgn}{\text{\sffamily sgn}}
\newcommand{\triv}{\text{\sffamily triv}}
\newcommand{\gen}{\text{gen}}
\newcommand{\sol}{\text{sol}}
\title{Spin Representations of Finite Coxeter Groups and Generalisations of Saxl's Conjecture}
\author{
Yutong Chen\thanks{Centre for Mathematical Sciences, University of Cambridge, Cambridge CB3 0WA, UK} 
\and Felix Gu\thanks{Mathematical Institute, University of Oxford, Oxford OX2 6GG, UK} 
\and Will Osborne\footnotemark[2]}
\date{}
\begin{document}

\maketitle

\begin{abstract}
This paper presents a natural generalisation of Saxl’s conjecture from a Lie-theoretical perspective, which is verified for the exceptional types. For classical types, progress is made using spin representations, revealing connections to certain tensor product decomposition problems in symmetric groups. We provide an alternative uniform description of the cuspidal family (in the sense of Lusztig) through spin representations, offering an equivalent conjecture formulation. Additionally, we generalise Saxl’s conjecture to finite Coxeter groups and prove it for the non-crystallographic cases.
\end{abstract}

\section{Introduction}
It is an open problem in representation theory to decompose tensor products of irreducible characters of the symmetric group $S_n$. A well-known open conjecture of this form is the following introduced by Saxl:

\begin{conjecture*}[Saxl]
    Let $\lambda=(k,k-1,...,2,1)$ be the staircase partition of $n=k(k+1)/2$, and let $\sigma_\lambda$ be the irreducible representation of $S_n$ indexed by $\lambda$. Then $\sigma_\lambda \otimes \sigma_\lambda$ contains all the irreducible representations of $S_n$ as constituents.
\end{conjecture*}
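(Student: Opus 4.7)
The plan is to combine three ingredients: self-conjugacy of $\lambda$, closure properties of the constituent set under standard semigroup operations, and the spin-representation framework alluded to in the abstract. Since $\lambda = \lambda'$, we have $\sigma_\lambda \otimes \sgn \cong \sigma_\lambda$, so $\langle \sigma_\lambda \otimes \sigma_\lambda, \triv \rangle = \langle \sigma_\lambda \otimes \sigma_\lambda, \sgn \rangle = 1$, and the constituent set $\Delta := \{\mu \vdash n : \langle \sigma_\lambda \otimes \sigma_\lambda, \sigma_\mu \rangle > 0\}$ is closed under $\mu \mapsto \mu'$. First I would split the problem by combinatorial type of $\mu$: hook, two-row, and near-rectangular shapes admit direct Littlewood--Richardson arguments combined with Murnaghan--Nakayama applied to the rigid rim-hook structure of the staircase, whose row-sum cycle types produce explicitly nonzero character values $\chi_\lambda(\pi)$ to feed into the inner product.

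For general $\mu$, I would exploit semigroup and stability properties of Kronecker coefficients (in the spirit of Bessenrodt--Bowman and Ikenmeyer--Pak--Panova--Vallejo) to cut the problem down to a spanning family: once a suitable generating set of partitions is placed in $\Delta$, closure under tensoring with hooks or adding $1$'s propagates membership to everything else. In parallel, I would pull the computation into Schur's double cover $\widetilde S_n$. Because $\lambda$ has distinct parts, it is a strict partition and indexes an irreducible spin character $\zeta_\lambda$; the relation between $\zeta_\lambda \otimes \zeta_\lambda$ (viewed as an ordinary character of $\widetilde S_n$) and $\chi_\lambda \otimes \chi_\lambda$ yields Kronecker identities invisible at the symmetric-group level. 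Combined with the uniform description of the cuspidal family through spin representations promised in the abstract, this should reduce many remaining $\mu$ (especially those arising naturally from strict partitions) to a finite check.

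The hard part will be uniform sign control of the character-theoretic sum $\sum_g \chi_\lambda(g)^2\, \chi_\mu(g)/|C_g|$ for ``middle-range'' $\mu$. The squared factor $\chi_\lambda(g)^2$ is nonnegative, but the alternating contributions of $\chi_\mu(g)$ can still cancel in subtle ways, and it is exactly this delicate cancellation that has kept Saxl's conjecture open. I expect the spin-representation framework to enlarge $\Delta$ substantially and to clarify the obstruction geometrically via the cuspidal family, but a fully uniform resolution would likely require new positivity results for Kronecker coefficients beyond what is currently available; the realistic target is therefore a large explicit subset of $\Delta$ together with a reformulation of the residual problem entirely within the combinatorics of strict partitions.
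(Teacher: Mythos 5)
There is a genuine gap here, and it is the most basic one possible: the statement you are asked to prove is Saxl's conjecture itself, which is open, and the paper does not prove it --- it only states it, proves partial results, and generalises it. Your text is a research plan rather than a proof, and you concede as much in your final paragraph (``a fully uniform resolution would likely require new positivity results \dots the realistic target is therefore a large explicit subset of $\Delta$''). A proof cannot end by redefining its target. Two specific steps in your outline are unsubstantiated. First, the claim that ``once a suitable generating set of partitions is placed in $\Delta$, closure under tensoring with hooks or adding $1$'s propagates membership to everything else'' has no known justification: the semigroup property of Kronecker coefficients gives closure of $\Delta$ under certain horizontal/vertical additions, but no finite generating family together with these operations is known to exhaust $P(n)$, and this is precisely where all existing attacks stall. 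Second, the double-cover route you sketch is exactly Bessenrodt's argument and the one the paper carries out (Proposition \ref{saxlA-1}): since $\lambda$ is the minimal solvable (strict) partition and is self-dual, the only $S_n$-constituent of $\widetilde{\sigma}_\lambda\otimes\SF$ is $\sigma_\lambda$ itself, and $\SF\otimes\SF=\bigwedge^{\bullet}V$ forces $\sigma_\lambda\otimes\sigma_\lambda$ to contain all hooks $(n-m,1^m)$ --- but only the hooks. The spin framework, as currently developed, does not ``enlarge $\Delta$ substantially'' beyond this; in the paper it is used instead to reformulate the conjecture (Theorem \ref{getallfamily}) and to generalise it to other Coxeter groups, not to prove the type-$A$ case.

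What you get right is the inventory of known techniques (self-conjugacy giving $\triv$ and $\sgn$ with multiplicity one, closure of $\Delta$ under transpose, Littlewood--Richardson and Murnaghan--Nakayama arguments for hooks and two-row shapes, semigroup properties of Kronecker coefficients). These do produce large explicit subsets of $\Delta$, consistent with the literature you cite. But the ``middle-range'' partitions remain untouched, and no amount of sign analysis of $\sum_g \chi_\lambda(g)^2\chi_\mu(g)/|C_g|$ in your sketch actually establishes positivity for a single new $\mu$. As a referee I would have to say: this is a competent survey of why the conjecture is hard, not a proof that it is true.
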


Recently, a lot of research has been done to identify many constituents of the tensor product (e.g. see \cite{BessenrodtSaxlConjecture} \cite{ikenmeyer2015saxl} \cite{pak2016kronecker}). While previous methods used to attack Saxl's conjecture are purely combinatorial, we shed new light on it by viewing $S_n$ as a Weyl group of type $A$ and considering the Lie-theoretic perspective. As for the irreducible character $\sigma_\lambda$ corresponding to the partition $\lambda=(k,k-1,...,2,1)$, we view $\lambda$ as the the unique \textbf{self-dual solvable} nilpotent orbit of $\mathfrak{sl}_n$. Self-duality in this case means $\lambda=\lambda^t$, while solvable means any element in the orbit has solvable centralizer in the Lie algebra. The alignment between irreducible characters of $S_n$ and the nilpotent orbits of $\mathfrak{sl}_n$ is a special case of the Springer correspondence.\\

In this setting, we formulate a natural generalisation of Saxl's conjecture to arbitrary Weyl groups. Fix a Weyl group $W$, its associated Lie algebra $\g$ and Lie group $G$, the set of nilpotent orbits $G \backslash \mathcal{N}$ in $\g$, and the set of irreducible representations $\Irr(W)$ of $W$. In \cite{Spaltensteindmapbook}, Spaltenstein defined an order-reversing map $d: G \backslash \mathcal{N} \to G \backslash \mathcal{N}$ that is an involution on its range (i.e. $d=d^3$), which is a natural generalisation of the transpose map for type A. If $\mathcal{O} \in G \backslash \mathcal{N}$ is in the range of $d$, then we say it is \textit{special}, and in this case we also say the $\sigma \in \Irr(W)$ attached to $\mathcal{O}$ and a trivial local system, via the Springer correspondence, are special.\\

On the other hand, Lusztig defined a partition of $\Irr(W)$ into so-called Lusztig families, which play an essential role in the classification of representations of finite groups of Lie type (see section 13.2 of \cite{carter1993finite}). For symmetric groups, each character forms a family on its own, while the partition is much coarser in other types. We may assign a Lusztig family to each special orbit $\O$ by the following procedure. Let $\sigma \in \Irr(W)$ be a corresponding special character as above. Say $\sigma$ is in Lusztig family $F$. Then $\O$ is the unique special orbit attached to a character in $F$, and we write $\O = \O_{F}$.\\ 

We're now ready to state our generalisation of Saxl's conjecture: 

\begin{conjecture*}[Generalised Saxl Conjecture for Weyl Groups]
    Let $\mathfrak{g}$ be a simple Lie algebra with Weyl group $W$. Suppose $\mathfrak{g}$ has a solvable nilpotent orbit $\O$ corresponding to the special character $\sigma_F=\sigma_{\O,\triv}$ in Lusztig family $F$, with the property that $d(\O)=\O$. Then $(\bigoplus_{\sigma\in F} \sigma)^{\otimes 2} $ contains all irreducible characters of $W$.
\end{conjecture*}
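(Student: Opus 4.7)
The plan is to attack the conjecture case by case according to the type of the simple Lie algebra $\mathfrak{g}$, because Lusztig families have very different sizes and structure in each type, and the tools naturally split.

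The first reduction is to type $A_{n-1}$: here every Lusztig family is a singleton, Spaltenstein's $d$ reduces to partition transposition, and the only self-dual solvable nilpotent orbit is the one indexed by the staircase partition $(k,k-1,\dots,1)$. Thus for type $A$ the generalised conjecture collapses to the original Saxl conjecture, which I would take as given (or open) rather than attempt directly. For the exceptional types $G_2, F_4, E_6, E_7, E_8$, my strategy is a finite verification. Using the Bala-Carter classification, I would list all nilpotent orbits, read off those that are solvable (solvable centraliser in $\g$), self-dual under $d$, and special; for each I would locate the Lusztig family $F$ in the tables of Lusztig or Carter, and then compute the inner products $\langle (\bigoplus_{\sigma\in F}\sigma)^{\otimes 2},\tau\rangle$ for every $\tau\in\Irr(W)$, checking positivity. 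This is a finite computation (feasible by hand for $G_2, F_4$ and via CHEVIE for the $E$ series), and it suffices to prove the conjecture in these types outright.

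For the classical types $B_n, C_n, D_n$, families can be large, direct character computation is infeasible, and the natural tool is the spin representation theory sketched elsewhere in the paper. The plan is to:
\begin{enumerate}
    \item[(i)] show that, for a self-dual solvable $\O$ of classical type, the family sum $\Sigma_F := \bigoplus_{\sigma\in F}\sigma$ can be realised, up to a known sign/twist, as the restriction to $W$ of a (sum of) spin characters of the double cover $\widetilde W$;
    \item[(ii)] use the Frobenius-Schur indicator machinery for projective/spin characters to rewrite $\Sigma_F\otimes\Sigma_F$ as a manifestly nonnegative combination of ordinary characters, with coefficients given by Kronecker-type coefficients of spin characters of $S_m$ for an appropriate $m$;
    \item[(iii)] reduce the containment of each $\tau\in\Irr(W)$ in $\Sigma_F\otimes\Sigma_F$ to a statement about Kronecker products of basic spin characters of symmetric groups, and attack this via combinatorial techniques (semigroup/monotonicity arguments of Bessenrodt, the character polytope methods of Ikenmeyer, and the row/column bound arguments of Pak-Panova-Vallejo).
\end{enumerate}

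The main obstacle is step (iii) in the classical setting: even after the spin reduction, one is left asking that a square of a (sum of) spin characters of $S_m$ covers all ordinary irreducibles, which is strictly harder than partial results currently available. In particular, for type $A$ the reduction is trivial and one recovers Saxl verbatim, confirming that a complete proof for classical types must be at least as hard as Saxl's original conjecture; I therefore expect to prove in this direction only that large, explicit families of $\tau$ (e.g.\ those indexed by hooks, two-row shapes, and partitions close to the staircase) are constituents, leaving the full classical case conditional on progress on the symmetric-group Kronecker problem.
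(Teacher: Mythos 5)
Your proposal matches the paper's strategy essentially point for point: the statement is a conjecture which the paper also only establishes in part, namely by CHEVIE computation for the exceptional types, by observing that type $A$ is verbatim Saxl's conjecture (the staircase being the unique self-conjugate strict partition), and by using the Pin cover and the surjection $\Psi$ to realise the family sum via $\widetilde{\sigma}\otimes\SF$ (your step (i) is the paper's Theorem \ref{getallfamily}), rewrite the square as $\bigwedge^{\bullet}(V)\otimes\Tsig\otimes\Tsig$ (your step (ii)), and reduce the classical cases to Kronecker-type problems for square and near-square partitions of symmetric groups (your step (iii), the paper's Corollary \ref{tensorproddecompose}), with the full classical case left open in both. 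Your honest assessment of where the argument stops — partial families of constituents in types $C$ and $D$, conditional on progress on the symmetric-group tensor problem — coincides with what the paper actually achieves.
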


A solvable orbit satisfying the property $\O=d(\O)$ is called \textbf{self-dual}. The self-dual orbits are closely connected with the notion of of \say{cuspidal family} introduced by Lusztig \cite{lusztig1984characters} (explained in Remark \ref{cuspidal}).

Our main tool for attacking this conjecture is to use the projective representations of $W$. For each Weyl group $W$, there is a canonical degree 2 central extension $\Tw$, called the Pin cover of $W$. It is well-known that $\Irr(\Tw)$ is in 1-1 correspondence with the projective representations of $W$. Let $\Irrg(\Tw)$ be the set of all irreducible representations of $\Tw$ not inflated from $W$, we call these the \textbf{genuine} irreducible representations. Among $\Irrg(\Tw)$, there are 1 or 2 distinguished representations, called the spinor module(s). Ciubotaru \cite{ciubotaru2012spin} introduced a parameterisation of projective genuine representations of $W$ by the solvable orbits of $\g$ via a surjection: $$ \Psi: \Irr_{\gen}(\Tw) \twoheadrightarrow G \backslash \mathcal{N}_{\sol}$$ It has been proved that this parameterisation interacts nicely with the Springer correspondence when considering the tensor product decomposition of any genuine representation with the spinor module(s).\\

With the viewpoint of spin representations, we prove a theorem giving an alternative uniform description of the cuspidal family, see Theorem \ref{getallfamily}. This inspires the following equivalent version of the generalised Saxl conjecture.

\begin{conjecture*}
    Let $\mathfrak{g}$ be a simple Lie algebra with Weyl group $W$. Suppose $\mathfrak{g}$ has a solvable nilpotent orbit $\O$ such that $d(\O)=\O$. Then $$\bigwedge\nolimits^{\bullet}(V) \otimes \Big( \bigoplus_{\Tsig\in\Psi^{-1}(\O)} \Tsig \Big)^{\otimes 2}$$ contains all irreducible characters of $W$, where $V$ is the reflection representation of $W$.
\end{conjecture*}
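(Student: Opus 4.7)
The plan is to prove that this conjecture is \emph{equivalent} to the Generalised Saxl Conjecture for Weyl Groups stated earlier in the introduction, with the translation between the two provided by Theorem \ref{getallfamily} together with the Clifford-algebra identification of $\bigwedge^{\bullet}(V)$ as a tensor product of spinor modules. Equivalence then reduces the present statement to a conjecture whose verification is already discussed in the body of the paper (settled for exceptional and non-crystallographic types, open in the classical cases).

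First, I would invoke the classical Clifford-algebra isomorphism $\Cl(V) \cong \End(S)$, together with its $\Z/2$-graded modification when $\dim V$ is odd, to produce an isomorphism of $\Tw$-modules
$$ \bigwedge\nolimits^{\bullet}(V) \;\cong\; S \otimes S^{*}, $$
where $S$ denotes the spinor module, or the direct sum of the two half-spin modules when they both exist. Since $\Tw$ sits inside $\Pin(V)$, this intertwines the $\Tw$-actions on both sides, and because the central $\Z/2$ acts trivially on both factors, the isomorphism descends to an identity of $W$-representations.

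Second, Theorem \ref{getallfamily} supplies the spin-theoretic description of the Lusztig family $F$ attached to $\O$. Packaged in representation-theoretic form, it should yield the identity of $W$-representations
$$ S \otimes \Big( \bigoplus_{\Tsig \in \Psi^{-1}(\O)} \Tsig \Big) \;\cong\; \bigoplus_{\sigma \in F} \sigma, $$
which is the spin-theoretic counterpart of Ciubotaru's surjection $\Psi$ refining the Springer correspondence: tensoring by the spinor collapses the fiber $\Psi^{-1}(\O)$ onto the local systems on $\O$ indexing $F$. Combining the two steps and regrouping tensor factors,
$$ \bigwedge\nolimits^{\bullet}(V) \otimes \Big( \bigoplus_{\Tsig} \Tsig \Big)^{\otimes 2} \;\cong\; \Big( S \otimes \bigoplus_{\Tsig} \Tsig \Big) \otimes \Big( S^{*} \otimes \bigoplus_{\Tsig} \Tsig \Big) \;\cong\; \Big( \bigoplus_{\sigma \in F} \sigma \Big)^{\otimes 2}, $$
using the self-duality of every $W$-character (as they are $\Q$-valued). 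The two virtual representations therefore have the same irreducible $W$-constituents, and each form of the conjecture implies the other.

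The hard part is not the formal manipulation above but guaranteeing that Theorem \ref{getallfamily} gives a genuine isomorphism of representations with the correct multiplicities, rather than merely an equality in the Grothendieck group up to signs, and verifying that the involution $\Tsig \mapsto \Tsig^{*}$ preserves the fiber $\Psi^{-1}(\O)$ so that the regrouping is valid as an honest isomorphism. A secondary subtlety is the case of odd $\dim V$ (arising for example in type $A_{2k}$), where the unique spinor module is self-dual only up to a twist by $\sgn$; here one must track the $\Z/2$-grading on $\Cl(V)$ carefully and check that the resulting sign twist does not change the set of irreducible $W$-constituents of the final tensor product.
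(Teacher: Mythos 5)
Your route is the paper's route: the statement is a conjecture, and what is actually provable is its equivalence with Conjecture \ref{gen_Saxl_conj_ver_II}, obtained by combining Theorem \ref{getallfamily} with the identity $\SF\otimes\SF=(\bigwedge^{\bullet}V)^{\oplus a_V}$ of Remark \ref{SFotimesSF}; this is exactly how the paper derives it (as the corollary immediately following Theorem \ref{getallfamily}). The one genuine flaw in your write-up is that you package Theorem \ref{getallfamily} as an isomorphism $S\otimes\bigl(\bigoplus_{\Tsig\in\Psi^{-1}(\O)}\Tsig\bigr)\cong\bigoplus_{\sigma\in F}\sigma$. That isomorphism is false in general: the theorem asserts only that the two sides have the same set of irreducible constituents, and the appendix tables show multiplicities strictly greater than $1$ (e.g. $\phi_{16,5}$ appears with multiplicity $3$ in $\bigoplus_{\Tsig\in\Psi^{-1}(F_4(a_3))}(\Tsig\otimes\SF)$, and $\phi_{2,2}$ with multiplicity $2$ in type $G_2$). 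Fortunately the stronger claim is also unnecessary: since both conjectures are assertions about which irreducibles occur, not with what multiplicity, it suffices that if $A$ and $B$ have the same support then so do $A^{\otimes 2}$ and $B^{\otimes 2}$, and that $\bigwedge^{\bullet}(V)\otimes(\bigoplus\Tsig)^{\otimes 2}$ has the same support as $\bigl(\bigoplus\Tsig\otimes\SF\bigr)^{\otimes 2}$. Your closing worries about $\Tsig^{*}$ and the odd-dimensional case are likewise dissolved by working throughout with $\SF$ (which is by construction $\sgn$-stable and self-dual) instead of a single spinor module $S$; the paper's Remark \ref{SFotimesSF} is formulated precisely to absorb the factor $a_V$, which is harmless at the level of supports. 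So: correct reduction, same approach as the paper, but replace "isomorphism" by "equality of sets of constituents" throughout.
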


If $\mathfrak{g}$ is of classical type, then $\Psi^{-1}(\O)$ contains either only one element, or two elements that are $\sgn$-dual to each other. However, $\bigwedge^{\bullet}(V)$ is $\sgn$-dual to itself. We therefore only need to look at $\bigwedge^{\bullet}(V)\otimes\Tsig\otimes\Tsig$ for any $\Tsig\in\Psi^{-1}(\O)$, since including $\Tsig \otimes \sgn$ in the tensor product provides only the same irreducible representations in the decomposition.\\

In this paper, we actually work towards a slightly stronger version of the generalised Saxl conjecture, see Conjecture \ref{gen_Saxl_conj_ver_I}. We prove this conjecture for all exceptional types, notably for $E_7$ when the family only contains two characters $\phi_{512,11},\phi_{512,12}$ and the statement mimics the \say{actual} Saxl conjecture. For the classical types, we prove partial results case by case using combinatorial methods. We also relate the generalised Saxl conjecture for types C and D to certain \say{tensor product decomposition} problems for symmetric groups, which roughly says the square tensor of a square (or almost square) partition contains \say{almost all} irreducible characters of $S_n$ for appropriate $n$ (note that it cannot contain all irreducible characters, by numerical experiments). For a precise statement, see Corollary \ref{tensorproddecompose}.\\

Eventually, we state a version of the generalised Saxl conjecture for all Coxeter groups (including non-crystallographic groups) dependent on the notion of \say{good families}, see Definition \ref{good}. This definition is natural in the sense of \cite{chan2013spin}. We then show this formulation is compatible with previous generalisations of the Saxl conjecture for Weyl groups, and prove it in the case of non-crystallographic Coxeter groups.

\subsection*{Acknowledgements}

We owe many thanks to our summer project supervisor Prof. Dan Ciubotaru for his invaluable mentorship over the summer. Dan guided us through the theory of spin representations, met with us regularly to discuss various ideas, directed us to many helpful resources, and gave us the initial motivation for this paper. We are grateful for the time spent sharing his great knowledge and precious advice throughout.

We would also like to thank the University of Oxford for providing partial funding for this project. In addition, the first author thanks the LMS Undergraduate Research Bursary, the second author thanks St Peter's College for the Simpson Fund Award, and the third author thanks the Crankstart Scholarship and Martingale Foundation, for their financial support.

\newpage
\section{Preliminaries}

\subsection{Clifford Algebras and Pin Groups}
More details on the following can be found in \cite{Meinrenken}, though we give all the necessary preliminaries here. Let $V$ be a vector space over a field $\C$, with a symmetric bilinear form $B:V \times V \to \C$. The \textbf{Clifford algebra} $C(V)$ is defined to be the quotient of the tensor algebra $T(V)$ by the ideal generated by:
$$ v \otimes v'+v' \otimes v+2B(v,v')=0, \; v, v' \in V $$

$C(V)$ is in fact a $\Z/2\Z$-graded algebra: $C(V)=C(V)^{[0]}\bigoplus C(V)^{[1]}$. This $\Z/2\Z$-grading inherits the $\Z$-grading on the tensor algebra $T(V)$: the even part $C(V)^{[0]}$ is generated by products of the form $v_1...v_k$ for $k$ even, whereas the odd part $C(V)^{[1]}$ is generated by products of the form $v_1...v_k$ for $k$ odd.

The representation theory of the Clifford algebra is particularly easy: it has only one or two irreducible representations, depending on the parity of $\text{dim}(V)$. If $\dim V$ is even, the Clifford algebra has a unique complex simple module $S$, with $\dim(S)=2^{\frac{\dim V}{2}}$. When $\dim V$ is odd, there are two different modules $S^{+}$ and $S^{-}$ of dimension $2^{\lfloor \frac{\dim V}{2} \rfloor}$. In the rest of the article, we will refer to any one of $S,S^+,S^-$ as a \textbf{spinor module}. In particular, $S$ is $\sgn$-dual to itself, whereas $S^{\pm} \otimes \sgn = S^{\mp}$. 

We now look into a distinguished subgroup of the Clifford algebra $C(V)$. Define an automorphism $\epsilon$ on $C(V)$ that is equal to $1$ on $C(V)^{[0]}$ and $-1$ on $C(V)^{[1]}$. Let $^t$ be the transpose anti-automorphism on $C(V)$ induced by $v^t=-v, v \in V$.

The \textbf{Pin group} is defined to be
$$ \Pin(V):=\{a \in C(V)^*: \epsilon(a)Va^{-1} \subseteq V, \; a^t=a^{-1} \} $$

The Pin group sits in a short exact sequence 
$$
\{1\} \rightarrow \Z/2\Z \rightarrow \Pin(V) \xrightarrow[]{p} \o(V) \rightarrow \{1\}
$$
where $p$ is defined to be $p(a).v=\epsilon(a)va^{-1}$.

Since $\Pin(V)$ generates $C(V)$, we know any spinor module $S$ is also a simple $\Pin(V)$-module. In addition, there's a well-known formula (see section II.6 of \cite{ctscohom}) 

$$ S \otimes S=\bigwedge\nolimits^\bullet V, \; \text{when dim V is even} $$
and
$$ S \otimes S=\bigoplus_{i=0}^{ \lfloor \dim V/2 \rfloor} \bigwedge\nolimits^{2i} V, \; \text{when dim V is odd} $$
as $\Pin(V)$-modules.

\begin{remark}[(5.5.5) of \cite{ciubotaru2012spin}]
\label{SFotimesSF}
 Let $\SF=S^{+}+S^{-}$ if $dim(V)$ is odd, or $\SF=S$ if $dim(V)$ is even. Then $$
 \SF \otimes \SF = \Big( \bigwedge\nolimits^\bullet V \Big)^{ \oplus a_{V}}  
 $$ as $\o(V)$-representations, where $a_{V} = 2$ in the odd case, and $a_{V} = 1$ in the even case.
\end{remark}

\subsection{Finite Coxeter Groups and Pin Covers}

This section is a quick reminder of several important constructions appearing in \cite{barbasch2012dirac}. We fix a finite Coxeter group $W$ associated with root system $(V_0,R)$, where $V_0$ is a finite dimensional real vector space. Choose a set of positive roots $R^+$, and let $\Pi\subseteq R^+$ be the corresponding set of simple roots. $W$ thus admits the following canonical presentation:
$$ W=\langle s_{\alpha}, \alpha \in \Pi: s_{\alpha}^2=1, (s_{\alpha}s_{\beta})^{m(\alpha,\beta)}=1, \; \text{for} \; \alpha, \beta \in \Pi \rangle, $$
where $m(\alpha,\beta) \in \mathbb{N}$ and $m(\alpha, \alpha) = 1$.

Let $V=V_0\otimes_\R \C$ be the complexified vector space. We may then view $W$ as a finite subgroup of $\o(V)$. From the degree 2 covering map between Lie groups $p:\Pin(V)\rightarrow \o(V)$, we can define a \say{double cover} of $W$:

$$ \Tw := p^{-1}(\o(V)) \subseteq \Pin(V) $$

This makes $\Tw$ a degree 2 central extension of $W$:
$$ \{1\} \rightarrow \Z/2\Z \rightarrow \Tw \rightarrow W \rightarrow \{1\} $$

One can see that $\Tw$ has the following presentation:
$$ \Tw=\big\langle z,f_\alpha, \alpha \in \Pi: z^2=1, f_\alpha^2=z, (f_\alpha f_\beta)^{m(\alpha,\beta)}=z, \; \text{for} \; \alpha \neq \beta \in \Pi \big\rangle. $$ $\Tw$ is called the \textbf{Pin cover} of the finite Coxeter group $W$.

This presentation is canonical in the sense that it's compatible with the embedding $\Tw \hookrightarrow \Pin(V)$, since 
there is a unique group homomorphism $\psi:\Tw \to \Pin(V)$ satisfying
$\psi(z)=-1$, and $\psi(f_{\alpha})=\alpha / |\alpha|$ for $\alpha \in \Pi$. Moreover, $\psi$ is injective, and we have $\psi(f_{\beta})=\beta / |\beta|$ for all $\beta \in R^+$.

A remarkable tool to study the representation theory of $\Tw$ is the following Casimir-type element introduced in \cite{barbasch2012dirac}:
$$\Omega_{\Tw}=z\sum_{\alpha, \beta>0, s_{\alpha}(\beta)<0}f_{\alpha}f_{\beta}.$$
It can be shown that $\Omega_{\Tw}$ is a central element of $\C[\Tw]$, see \cite{ciubotaru2012spin}.

\subsection{Representations of Weyl Groups and their Pin Covers}

Let $\mathfrak{g}$ be a complex semisimple Lie algebra with Cartan subalgebra $\mathfrak{h}$. Let $G$ be the simply connected Lie group with Lie algebra $\mathfrak{g}$. Let $W$ be the Weyl group of $\mathfrak{g}$, and thus we may view $W\subseteq \Pin(\mathfrak{h})$. Let $\mathcal{N}$ be the set of nilpotent elements of $\mathfrak{g}$. 

It is known from \cite{SpringerCorrespondence} that there is a parameterisation of irreducible representations of $W$ called the $\textbf{Springer correspondence}$, stated as follows.

Let $e \in \mathcal{N}$ be a nilpotent element. Let $Z_G(e)$ be the centraliser of $e$ in $G$ and $Z_G(e)^0$ be its identity component. Let $A(e)=Z_G(e)/Z_G(e)^0$ be its component group. Note that $A(e)$ actually only depends on the $G$-orbit of $e$. Let $\mathscr{B}^G$ be the variety of Borel subgroups of $G$. Let $\mathscr{B}^G_e$ be the closed subvariety of $\mathscr{B}^G$ consisting of Borel subgroups whose Lie algebra contains $e$. Let $d=\dim \mathscr{B}^G_e$.

In \cite{SpringerCorrespondence}, Springer showed that the top cohomology
$V_e=H^{2d}(\mathscr{B}^G_e,\C)$ admits two commuting actions of $A(e)$ and $W$. For $\phi \in \Irr(A(e))$, let $V_{e, \phi} := \text{Hom}_{A(e)}(\phi,V_e)$ be the isotypic component of $\phi$ in $V_{e}$ and let $\Irr_0(A(e)):=\{\phi\in \Irr(A(e)): V_{e,\phi} \neq 0\}$ be the set of such $\phi$ with non-zero isotypic components. 

The datum for the Springer correspondence is $\Sigma :=\{(e,\phi): e\in \mathcal{N}, \phi \in \Irr_0(A(e))\}$. Note that $G$ acts naturally on the set of nilpotent elements, and thus acts on $\Sigma$.

Finally, the Springer correspondence is the following bijection: $$\xi:G\backslash\Sigma \to \Irr_{}(W); \text{  } (e,\phi) \mapsto V_{e,\phi}$$

The representation theory of the Pin cover of $W$ is tied to that of $W$ in the following way. For every irreducible character $\chi$ of $\Tw$, the element $z$ acts by either $1$ or $-1$. If $z$ acts by $-1$, we say $\chi$ is a \textbf{genuine} irreducible representation of $\Tw$, for example the spinor module. Otherwise $\chi$ is non-genuine, and may be considered as a representation of $W$. Note, for example, that the tensor product of two genuine representations is non-genuine.

Let $\Irr_{\gen}(\Tw)$ be the set of genuine irreducible representations of $\Tw$. In \cite{ciubotaru2012spin}, the author constructed the following surjection:
$$ \Psi: \Irr_{\gen}(\Tw) \rightarrow G \backslash \mathcal{N}_{\sol}, $$
where $\mathcal{N}_{\sol}\subseteq\mathcal{N}$ is defined to be
$$ \mathcal{N}_{\sol}:=\{ e \in \mathcal{N}: \text{the centraliser of } e \text{ in } \g \text{ is a solvable Lie algebra} \} $$ The adjoint action of $G$ on $\mathfrak{g}$ gives rise to the sets $G \backslash \mathcal{N}_{\text{sol}} \subseteq G \backslash \mathcal{N}$.

\begin{definition}
    A nilpotent orbit $\O \in G \backslash \mathcal{N}$ is \textbf{solvable} if $\O \in G \backslash \mathcal{N}_{\text{sol}}$.
\end{definition}

By a version of Schur's lemma, we know the central element $\Omega_\Tw$ acts as scalar on each irreducible representation $\Tsig$ of $\Tw$. We denote this scalar as $\Tsig(\Omega_\Tw)$. Remarkably, this scalar has strong ties with the map $\Psi$.
\begin{theorem}[Theorem 1.0.1 of \cite{ciubotaru2012spin}, Theorem 5.1 of \cite{barbasch2012dirac}]
\label{generalisedSpringer}
The surjection $\Psi$ has the following nice properties:
\begin{enumerate}
    \item If $\Psi(\Tsig)=G\cdot e$, then $\Tsig(\Omega_\Tw)=\langle h_e,h_e\rangle$, where $h_e$ is the neutral element when extending $e$ into a Jacobson-Morozov triple.
    \item For a fixed spinor module $S$ of $C(V)$, if $e\in \mathcal{N}_{\text{sol}}$ and $\phi\in\Irr_0(A(e))$, then there exists $\Tsig\in \Psi^{-1}(G\cdot e)$ such that 
    \[
\Hom_W(\sigma_{e,\phi}, \Tsig\otimes S)\neq 0;
    \]
    \item If $\Psi(\Tsig)=G\cdot e$, then there exists $\phi\in\Irr_0(A(e))$ and a spinor module $S$ of $C(V)$ such that 
    \[
\Hom_W(\sigma_{e,\phi}, \Tsig\otimes S)\neq 0;
    \]
\end{enumerate}
\end{theorem}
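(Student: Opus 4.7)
My plan is to pass to the graded affine Hecke algebra $\mathbb{H}$ attached to $W$ and work within the Dirac operator formalism. Define the Dirac element
\[
D = \sum_i \xi_i \otimes \overline{\xi_i} \in \mathbb{H} \otimes C(V),
\]
where $\{\xi_i\}$ is a basis of $V$ and $\{\overline{\xi_i}\}$ is the dual basis viewed inside the Clifford algebra. The algebraic backbone is an identity of the shape $D^2 = -\Omega_{\mathbb{H}} \otimes 1 + \Omega_{\Tw}$, up to a fixed normalisation, relating the square of $D$ to the Casimir $\Omega_{\mathbb{H}}$ of $\mathbb{H}$ and the central element $\Omega_{\Tw}$ of $\C[\Tw]$. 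For any $\mathbb{H}$-module $X$ and any spinor module $S$ of $C(V)$, tensoring gives an $\mathbb{H} \otimes C(V)$-module $X \otimes S$, which restricts to a genuine $\Tw$-module via the canonical embedding $\Tw \hookrightarrow \mathbb{H} \otimes C(V)$. The Dirac cohomology $H^D(X) := \ker D / (\ker D \cap \operatorname{im} D)$ is thereby a genuine $\Tw$-representation, and this is the vehicle by which $\Psi$ will be constructed and understood.

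The first key step is to establish Vogan's conjecture for $\Tw$: if $\Tsig$ occurs in $H^D(X)$ for some irreducible $\mathbb{H}$-module $X$ with central character $\chi$, then $\Tsig(\Omega_{\Tw}) = \langle \chi, \chi \rangle$. Formally this follows from the square identity once one proves that the centre of a suitable $\Tw$-invariant subalgebra of $\mathbb{H} \otimes C(V)$ is generated, modulo the left ideal $D \cdot (\mathbb{H} \otimes C(V))$, by the image of the centre of $\mathbb{H}$ together with $D^2$. Granting this, part (1) is immediate: the map $\Psi$ is defined by sending $\Tsig$ to $G \cdot e$ exactly when $\Tsig$ occurs in $H^D$ of a tempered module with central character $h_e/2$, and the asserted formula $\Tsig(\Omega_{\Tw}) = \langle h_e, h_e \rangle$ is a rescaling of Vogan's identity.

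For parts (2) and (3), I would invoke the Kazhdan--Lusztig--Springer classification of tempered $\mathbb{H}$-modules. Each pair $(e, \phi)$ with $e \in \mathcal{N}_{\sol}$ and $\phi \in \Irr_0(A(e))$ determines a distinguished discrete series module $X_{e,\phi}$ whose restriction to $W$ contains the Springer representation $\sigma_{e,\phi}$ as its lowest $W$-type. Solvability of $e$ is precisely the condition ensuring that $X_{e,\phi}$ is not properly parabolically induced, which in turn can be shown to be equivalent to $H^D(X_{e,\phi}) \neq 0$. Any $\Tsig$ occurring in $H^D(X_{e,\phi})$ then satisfies $\Psi(\Tsig) = G \cdot e$ by Vogan's conjecture, and by construction embeds into $X_{e,\phi}|_W \otimes S$; since $\sigma_{e,\phi}$ is a $W$-type of $X_{e,\phi}$, Frobenius reciprocity gives $\Hom_W(\sigma_{e,\phi}, \Tsig \otimes S) \neq 0$, yielding part (2). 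For part (3), starting from $\Tsig \in \Psi^{-1}(G \cdot e)$ and running the same chain in reverse produces some $\phi \in \Irr_0(A(e))$ and spinor module for which the Hom is non-zero.

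The principal obstacle is Vogan's conjecture itself: controlling the centre of $(\mathbb{H} \otimes C(V))^{\Tw}$ modulo $D \cdot (\mathbb{H}\otimes C(V))$ is the technical heart of Barbasch--Ciubotaru--Trapa, and requires a careful interplay between the PBW-type filtration of $\mathbb{H}$ and Clifford algebra identities. A secondary but nontrivial difficulty is pinning down the image of $\Psi$ as exactly $G \backslash \mathcal{N}_{\sol}$: one must verify that non-solvability of $e$ forces the Dirac cohomology of every tempered module attached to $G \cdot e$ to vanish, which typically proceeds via unitarity of $\mathbb{H}$-modules and a Casimir-positivity argument on $D^\ast D$.
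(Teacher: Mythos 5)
This statement is not proved in the paper at all: it is imported verbatim from the cited references (Theorem~1.0.1 of Ciubotaru's spin--Springer paper and Theorem~5.1 of Barbasch--Ciubotaru--Trapa), so there is no internal proof to compare against. Your sketch does correctly identify the machinery used in those references --- the Dirac element $D\in\mathbb{H}\otimes C(V)$, the formula for $D^2$ in terms of $\Omega_{\mathbb{H}}$ and $\Omega_{\Tw}$, Vogan's conjecture for graded affine Hecke algebras, and the role of tempered modules with lowest $W$-type $\sigma_{e,\phi}$ --- and part (1) is indeed a consequence of exactly the chain you describe.

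There is, however, a genuine error in your route to parts (2) and (3). You assert that solvability of the centraliser of $e$ is ``precisely the condition ensuring that $X_{e,\phi}$ is not properly parabolically induced.'' That is false: tempered modules that are not properly parabolically induced are the discrete series, and by the Kazhdan--Lusztig classification these correspond to \emph{distinguished} nilpotent orbits, which form a strictly smaller set than the solvable ones in general. Already in type $A_{n-1}$ the only distinguished orbit is the regular one, whereas $G\backslash\mathcal{N}_{\sol}$ consists of all strict partitions of $n$; yet $\Psi$ must surject onto all of the latter. Consequently the surjectivity of $\Psi$ onto $G\backslash\mathcal{N}_{\sol}$, and the existence statements in (2)--(3), cannot be extracted from a discrete-series/Dirac-cohomology dichotomy alone. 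In the cited sources these points are established by combining the Dirac formalism with explicit combinatorial constructions of $\Irr_{\gen}(\Tw)$ for the classical types (Schur's and Read's parametrisations by strict partitions, matching the parametrisation of solvable orbits) and with case-by-case verification for the exceptional types. Your closing paragraph compounds the same issue: non-solvability of $e$ does not force vanishing of Dirac cohomology of every tempered module attached to $G\cdot e$ by a positivity argument in the way you suggest; the identification of the image of $\Psi$ is one of the genuinely case-dependent parts of the original proof.
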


The Springer correspondence and the representations of Pin covers of Weyl groups can both be phrased in terms of combinatorial data for classical types, which we will do so for the rest of this article whenever needed. More information on the Springer correspondence can be found in \cite{carter1993finite}.

\subsection{Parameterisations of \texorpdfstring{$\Irr(W), \ G \backslash \mathcal{N}, \text{ and } G \backslash \mathcal{N}_{sol} $}{}}

A full explanation of the parameterisation of $\Irr(W)$ may be found in section 3 of \cite{CIUBOTARU20221}, although we restate the essentials here for convenience. Let $P(n)$ be the partitions of $n \in \mathbb{N}$, $BP(n)$ the bi-partitions of $n$, and $DP(n)$ the strict partitions of $n$.

\begin{theorem}[Parameterisation of $\Irr(W)$ for Classical Types, \cite{CIUBOTARU20221}]

Let $\mathfrak{g}$ be a simple Lie algebra of classical type with Weyl group $W$. Then $\Irr(W)$ are parameterised by:

\renewcommand{\arraystretch}{1.6}
\begin{center}
\begin{tabular}{ |p{2cm}|p{6cm}|p{6cm}| }
\hline
$\mathfrak{g}$ & Parameterisation & Notes \\
\hline
$A_{n-1}$ & $\sigma = \sigma_{\lambda} $ for $\lambda \in P(n)$ & (via Young diagrams \cite{fulton1991representation}) \\
\hline
$B_{n}$ / $C_{n}$ & $\sigma = \sigma_{\lambda, \mu} $ for $(\lambda \times \mu) \in BP(n)$ & (by induction from type $A$) \\
\hline
$D_{n}$ & $\sigma = \sigma_{\lambda, \mu} = \sigma_{\mu, \lambda}$ for $\lambda \neq \mu$, & (by restriction from type B/C) \\
& otherwise $\sigma = \sigma_{\lambda, \lambda}^{+}$ or $\sigma = \sigma_{\lambda, \lambda}^{-}$ & \\
\hline
\end{tabular}
\end{center}
\renewcommand{\arraystretch}{1.0}
\end{theorem}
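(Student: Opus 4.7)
The plan is to treat each classical type separately, exploiting the well-known description of each Weyl group as a familiar finite group. For type $A_{n-1}$, where $W=S_n$, I would invoke Young's classical construction of Specht modules (as in \cite{fulton1991representation}): for each partition $\lambda\vdash n$ one builds an explicit module $\sigma_\lambda$, verifies irreducibility and pairwise non-isomorphism, and closes the count via $\sum_{\lambda\vdash n}(\dim\sigma_\lambda)^2=n!=|S_n|$. This also serves as the input for the higher types.

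For types $B_n$ and $C_n$, whose Weyl groups are abstractly isomorphic to the hyperoctahedral group $W(B_n)\cong(\Z/2\Z)^n\rtimes S_n$, I would apply Clifford/Mackey theory for a semidirect product with abelian normal subgroup, equivalently the standard wreath-product recipe. For each bi-partition $(\lambda,\mu)\in BP(n)$, one takes the $(\Z/2\Z)^n$-character $\triv^{\otimes|\lambda|}\boxtimes\sgn^{\otimes|\mu|}$, whose stabiliser inside $W(B_n)$ is the Young-type subgroup $W(B_{|\lambda|})\times W(B_{|\mu|})$; one extends this character to that stabiliser, twists by the exterior product $\sigma_\lambda\boxtimes\sigma_\mu$ inflated from the $S_{|\lambda|}\times S_{|\mu|}$ quotient (using the type $A$ result), and induces up to $W(B_n)$ to obtain $\sigma_{\lambda,\mu}$. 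Mackey theory delivers irreducibility and mutual distinctness, and the identity $\sum_{(\lambda,\mu)\in BP(n)}(\dim\sigma_{\lambda,\mu})^2=2^n n!=|W(B_n)|$ shows the list is exhaustive.

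For type $D_n$, I would exploit the fact that $W(D_n)$ is an index-$2$ normal subgroup of $W(B_n)$ and apply Clifford theory to $\Res^{W(B_n)}_{W(D_n)}\sigma_{\lambda,\mu}$. Conjugation by any element of the non-trivial coset swaps the two slots of a bi-partition, sending $(\lambda,\mu)\mapsto(\mu,\lambda)$. When $\lambda\neq\mu$, the characters $\sigma_{\lambda,\mu}$ and $\sigma_{\mu,\lambda}$ lie in a single $W(B_n)$-orbit and restrict to the same $W(D_n)$-irreducible, explaining the identification $\sigma_{\lambda,\mu}=\sigma_{\mu,\lambda}$. When $\lambda=\mu$ (forcing $n$ even), the bi-partition is fixed under the swap, and Clifford theory then forces a splitting $\Res\sigma_{\lambda,\lambda}=\sigma_{\lambda,\lambda}^{+}\oplus\sigma_{\lambda,\lambda}^{-}$ into two inequivalent pieces. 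I expect this fixed-point case to be the main obstacle: producing the two summands concretely (e.g.\ via eigenspaces of an outer-coset representative, or via explicit character values distinguishing them) and verifying their inequivalence is where the real work sits. The dimension bookkeeping $|W(D_n)|=2^{n-1}n!$ against the sum of squares over the prescribed parameter set then confirms completeness of the list.
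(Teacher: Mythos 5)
The paper does not prove this statement at all: it is quoted as a known parameterisation with a citation to the literature, so there is no internal argument to compare against. Your outline is the standard textbook proof (Specht modules for $S_n$; Clifford--Mackey theory for the wreath product $(\Z/2\Z)^n\rtimes S_n$ in types $B_n/C_n$; Clifford theory for the index-$2$ subgroup in type $D_n$) and is essentially sound, matching how the cited sources establish the result. One small point of care in type $D_n$: the criterion for whether $\Res^{W(B_n)}_{W(D_n)}\sigma_{\lambda,\mu}$ stays irreducible is whether $\sigma_{\lambda,\mu}\cong\sigma_{\lambda,\mu}\otimes\epsilon$, where $\epsilon$ is the linear character of $W(B_n)$ with kernel $W(D_n)$; since $\sigma_{\lambda,\mu}\otimes\epsilon\cong\sigma_{\mu,\lambda}$, the restriction is irreducible exactly when $\lambda\neq\mu$ and splits into two inequivalent pieces when $\lambda=\mu$ (forcing $n$ even). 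Your phrasing attributes the swap $(\lambda,\mu)\mapsto(\mu,\lambda)$ to conjugation by an outer-coset element, which acts on $\Irr(W(D_n))$ rather than on $\Irr(W(B_n))$; the tensoring-by-$\epsilon$ formulation is the one you need for the splitting criterion, after which the conjugation action correctly explains why the two summands $\sigma^{\pm}_{\lambda,\lambda}$ are swapped by the outer coset and hence inequivalent.
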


For the exceptional types, we follow Carter's notation \cite{carter1993finite}.

We now recall the parameterisations of $G \backslash \n$ and $G \backslash \n_{sol}$ for classical types. The second column is from Chapter 5 of \cite{collingwood1993nilpotent}, and the third column is from Section 3 of \cite{CIUBOTARU20221}.

\begin{theorem}[Parameterisations of $G \backslash \mathcal{N}$ and $G \backslash \mathcal{N}_{sol}$ for Classical Types, \cite{collingwood1993nilpotent}, \cite{CIUBOTARU20221}] 

Let $\g$ be a simple Lie algebra of classical type, with nilpotent orbits $G\backslash\mathcal{N}$ and solvable nilpotent orbits $G\backslash\mathcal{N}_{\text{sol}}$. Then we have the following parameterisation: 

\renewcommand{\arraystretch}{1.6}
\begin{center}
\begin{tabular}{ |p{1cm}|p{7cm}|p{7cm}| }
\hline
$\mathfrak{g}$ & $G \backslash \mathcal{N}$ & $G \backslash \mathcal{N}_{sol}$ \\
\hline
$A_{n-1}$ & $P(n)$ & $DP(n)$ \\
\hline
$B_{n}$ & $P_1(2n+1) \subseteq P(2n+1)$, partitions whose even parts have even multiplicity & Elements of $P_1(2n+1)$ where all parts are odd, and distinct parts have multiplicity $\leq 2$ \\
\hline
$C_{n}$ & $P_{-1}(2n) \subseteq P(2n)$, partitions whose odd parts have even multiplicity & Elements of $P_{-1}(2n)$ where all parts are even, and distinct parts have multiplicity $\leq 2$ \\
\hline
$D_{n}$ & $P_1(2n) \subseteq P(2n)$, partitions whose even parts have even multiplicity - except that the partitions with all even parts parameterise two distinct orbits & Elements of $P_1(2n)$ where all parts are odd, and distinct parts have multiplicity $\leq 2$ \\
\hline
\end{tabular}
\end{center}
\renewcommand{\arraystretch}{1.0}

\end{theorem}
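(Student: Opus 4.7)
The plan is to prove the two columns of the table separately for each classical type, reducing everything to Jordan normal form together with a careful analysis of the reductive part of the centralizer $Z_{\mathfrak{g}}(e)$.

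First I would dispatch $G \backslash \mathcal{N}$. For type $A_{n-1}$ with $G = SL_n(\C)$, a nilpotent $e \in \mathfrak{sl}_n$ is a nilpotent matrix, and conjugation under $G$ reduces it to Jordan form, so orbits biject with $P(n)$. For types $B_n, C_n, D_n$ I would invoke the Springer--Steinberg/Gerstenhaber classification: a nilpotent element preserving a non-degenerate symmetric (respectively skew) form is classified up to $G$-conjugacy by its Jordan type, subject to the parity restriction that even parts (respectively odd parts) occur with even multiplicity. The only subtlety is the type $D_n$ case of \emph{very even} partitions (all parts even), where the stabiliser in $O(2n)$ is not contained in $SO(2n)$, so the $O(2n)$-orbit splits into two $SO(2n)$-orbits; this accounts for the "except" clause in the table.

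Next I would prove the solvable classification by computing the reductive quotient of $Z_{\mathfrak{g}}(e)$. Using the standard description (e.g.\ Chapter 6 of \cite{collingwood1993nilpotent}), if $e \in \mathfrak{sl}_n$ has Jordan type $\lambda$ with part $i$ of multiplicity $m_i$, then the reductive part of $\mathfrak{z}_{\mathfrak{g}}(e)$ is $\bigoplus_i \mathfrak{gl}_{m_i}$ intersected with $\mathfrak{sl}_n$. This is solvable precisely when each $m_i \le 1$, giving exactly $DP(n)$. For types $B, C, D$, the reductive quotient becomes $\bigoplus_i \mathfrak{g}_i$ where $\mathfrak{g}_i$ is $\mathfrak{so}_{m_i}$ or $\mathfrak{sp}_{m_i}$ according to the parity of the part $i$ and the type of the form. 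Solvability then forces each $\mathfrak{g}_i$ to be abelian, i.e.\ $m_i \le 2$ with no $\mathfrak{sp}_{2k}$ ($k \ge 1$) factor and no $\mathfrak{so}_k$ ($k \ge 3$) factor. Matching these conditions type by type yields precisely the right-hand column: in type $B$ (and $D$) one must eliminate $\mathfrak{sp}$-factors, forcing all parts to be odd; in type $C$ one must eliminate $\mathfrak{so}$-factors, forcing all parts to be even; in every case distinct parts must appear with multiplicity at most $2$.

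The main obstacle I anticipate is the bookkeeping of which multiplicities $m_i$ contribute orthogonal versus symplectic factors, since this depends both on the parity of $i$ and on the type of $\mathfrak{g}$, and because one must simultaneously respect the global parity constraint inherited from the $G \backslash \mathcal{N}$ classification (even parts of even multiplicity in $B/D$, odd parts of even multiplicity in $C$). A secondary subtlety is that in type $D_n$ the very even partitions parameterise two orbits, but none of these are solvable (they contain parts of even multiplicity $\geq 2$, violating the solvability condition), so the two-orbit phenomenon does not propagate into the $G\backslash \mathcal{N}_{\text{sol}}$ column. Once these combinatorial checks are carried out uniformly across the four families, both columns of the table follow.
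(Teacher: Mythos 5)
This statement is quoted in the paper as background (the two columns are cited verbatim from Chapter 5--6 of Collingwood--McGovern and Section 3 of Ciubotaru's paper), so the paper itself supplies no proof; your sketch is the standard argument from those sources, namely the Gerstenhaber/Springer--Steinberg classification for the orbit column and Theorem 6.1.3 of Collingwood--McGovern (reductive part of the centralizer) for the solvable column, combined with the observation that a centralizer is solvable iff its reductive part is abelian. In outline this is correct, and your remarks about the very even partitions in type $D$ (both that they split the orbit and that they never appear in the solvable column) are right.

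There is, however, a concrete slip in exactly the bookkeeping you flagged as the main obstacle. In type $C$ the reductive part of the centralizer is $\bigoplus_{i\ \mathrm{odd}} \mathfrak{sp}_{m_i} \oplus \bigoplus_{i\ \mathrm{even}} \mathfrak{so}_{m_i}$: the \emph{odd} parts contribute symplectic factors, and since odd parts in $P_{-1}(2n)$ have even multiplicity, any odd part forces a factor $\mathfrak{sp}_{m_i}$ with $m_i \geq 2$, which is never solvable. That is what forces all parts to be even; the surviving even parts contribute $\mathfrak{so}_{m_i}$, whence the condition $m_i \leq 2$. Your sentence ``in type $C$ one must eliminate $\mathfrak{so}$-factors, forcing all parts to be even'' has the assignment backwards, and taken literally it would permit odd parts of multiplicity $2$ (since $\mathfrak{so}_2$ is abelian), contradicting the table. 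The analogous statement for types $B$ and $D$ (even parts give $\mathfrak{sp}_{m_i}$ with $m_i$ even, hence must be absent; odd parts give $\mathfrak{so}_{m_i}$, hence $m_i \leq 2$) you state correctly. With that one correction the argument goes through uniformly.
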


For future use, we will need a particular partial order on $G \backslash \n$: the closure ordering.

\begin{definition}
    Let $\g$ be a Lie algebra of classical types, and $\lambda, \mu$ be two partitions parameterising nilpotent orbits $\O_{\lambda}, \O_{\mu} \in G \backslash \n$. We define a relation $\leq$ on $G \backslash \n$, where $\O_{\lambda} \leq \O_{\mu}$ if and only if $\overline{\O_{\lambda}} \subseteq \overline{\O_{\mu}}$. This relation is a partial order, called the \textbf{closure ordering} on $G \backslash \n$.
\end{definition}

The closure ordering can be described combinatorially in classical types. In fact, it almost coincides with the dominance order on associated partitions.

\begin{definition}
    There exists a partial order on the set of partitions of $n$, called the \textbf{dominance order}, $\leq_{\text{dom}}$. For $\lambda, \mu \in P(n)$, set $\lambda=(x_1,...,x_n)$ and $\mu=(y_1,...,y_n)$, with the parts in non-increasing order, and allowing zeroes. We say $\lambda \leq_{\text{dom}} \mu$ if and only if $\sum_{i=1}^{k} x_i \leq \sum_{i=1}^{k} y_i$ for any $1 \leq k \leq n$.
\end{definition}

\begin{theorem}[Theorem 6.2.5 of \cite{collingwood1993nilpotent}]
    Let $\g$ be simple Lie algebra of classical type, and $\O_\lambda,\O_\mu$ be nilpotent orbits attached to partitions $\lambda,\mu$. Then
    $\O_{\lambda} < \O_{\mu}$ if and only if $\lambda <_{\text{dom}} \mu$.
\end{theorem}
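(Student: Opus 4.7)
The strategy is to prove both implications via a combinatorial description of orbit closures in terms of Jordan type. The key invariant is the sequence of ranks of powers: for a nilpotent element $X$ of Jordan type $\lambda = (\lambda_1 \geq \lambda_2 \geq \cdots)$, one has $\dim \ker(X^k) = \lambda_1^t + \cdots + \lambda_k^t$, and since rank is upper semi-continuous on the Lie algebra, $\O_\mu \subseteq \overline{\O_\lambda}$ forces $\dim\ker(Y^k) \geq \dim\ker(X^k)$ for every $k$, i.e.\ $\mu^t \geq_{\text{dom}} \lambda^t$. Because transposing partitions reverses the dominance order, this is equivalent to $\mu \leq_{\text{dom}} \lambda$. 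So the forward direction (closure $\Rightarrow$ dominance) in type $A$ is almost immediate from linear algebra; in types $B,C,D$ the same rank argument works after checking that the Jordan type of a nilpotent element in $\mathfrak{so}$ or $\mathfrak{sp}$ is computed exactly as in $\mathfrak{gl}$, which is a standard fact.

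For the reverse direction (dominance $\Rightarrow$ closure), the plan is to reduce to covering relations in the dominance order. The dominance order on $P(n)$ is generated by elementary moves that push a single box from the end of one row down to the top of a shorter row (subject to the resulting sequence still being a partition). For such a covering pair $\mu \lessdot_{\text{dom}} \lambda$, one can exhibit an explicit one-parameter family of nilpotent matrices $X_t$ in $\O_\lambda$ for $t \neq 0$ whose limit $X_0$ lies in $\O_\mu$: working in a Jordan basis adapted to $\lambda$, the family is constructed by shrinking one Jordan block and lengthening another. Iterating covering relations along any chain from $\mu$ to $\lambda$ in the dominance poset then yields $\O_\mu \subseteq \overline{\O_\lambda}$ by transitivity of closure inclusion. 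This handles type $A$ completely.

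For types $B$, $C$, $D$ the same skeleton applies, but each elementary degeneration must be performed inside the isometry group of the relevant bilinear form. The main obstacle is that the minimal-box-move covering relations in $P_1(2n+1)$, $P_{-1}(2n)$, and $P_1(2n)$ are coarser than those in the ambient $P(N)$: one may have to move two boxes simultaneously to preserve the parity constraints on multiplicities. So I would first classify the covers within each restricted poset (a standard combinatorial exercise), and then, for each cover type, write down an explicit symmetric- or symplectic-compatible one-parameter degeneration. The delicate point is type $D$ with very even partitions, where a single partition labels two distinct orbits $\O_\lambda^{\pm}$; here one must argue that $\overline{\O_\lambda^+}$ and $\overline{\O_\lambda^-}$ contain exactly the same strictly smaller orbits (a symmetry argument using an outer automorphism), and that $\O_\lambda^+$ and $\O_\lambda^-$ are incomparable, which is where the statement as phrased needs the convention that $\leq_{\text{dom}}$ be interpreted on the underlying partitions.

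The hard step is undoubtedly the construction of explicit degenerations preserving the bilinear form in types $B,C,D$ (especially across the parity-forced two-box covers). I expect this to be the bulk of the proof, whereas the forward direction, the Jordan-rank formula, and the lattice-theoretic reduction to covering relations are essentially routine.
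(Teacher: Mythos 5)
This statement is quoted verbatim from Theorem 6.2.5 of Collingwood--McGovern and the paper offers no proof of its own, so there is nothing internal to compare against; your sketch should be measured against the standard argument in the cited source, and it matches it. Your strategy is the classical Gerstenhaber--Hesselink--Kraft--Procesi one: the forward direction via $\dim\ker(X^k)=\lambda_1^t+\cdots+\lambda_k^t$ and semicontinuity (note: rank is \emph{lower} semicontinuous, equivalently $\dim\ker$ is upper semicontinuous --- the inequality you draw is the right one, only the label is off), and the reverse direction via one-parameter degenerations realizing the covering relations, with the covers of $P_1(2n+1)$, $P_{-1}(2n)$, $P_1(2n)$ being coarser than single-box moves and the very even type $D$ partitions handled by the outer automorphism. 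You have correctly located where the real work lies --- the explicit form-compatible degenerations for each cover type in types $B$, $C$, $D$ --- but that step is only announced, not executed, so as written this is an accurate roadmap rather than a complete proof; for the purposes of this paper, citing Collingwood--McGovern (or Kraft--Procesi for the minimal degenerations) is the appropriate resolution.
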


\begin{remark}
In fact we can replace $<$ by $\leq$ in the above theorem, except for type D. Extra care must be taken in type D since there can be 2 orbits attached to partition that are incomparable.
\end{remark}

For $\tilde{\sigma} \in \Irr_{\text{gen}}(\Tw)$, the ordering of scalars $\tilde{\sigma}(\Omega_\Tw)$ is compatible through $\Psi$ with the closure ordering on the corresponding solvable  orbits.

\begin{remark}
\label{lengthofh}
     Let $G \cdot e_1, G \cdot e_2$ be two nilpotent orbits such that $e_1 \geq e_2$ in the closure ordering. Then 
    $ \langle h_{e_1}, h_{e_1} \rangle \geq \langle h_{e_2},h_{e_2} \rangle. $ See Corollary 3.3 of \cite{gunnells2002characterization} for a uniform proof.

    In particular, we have the following as an easy corollary from Theorem \ref{generalisedSpringer} (1). Let $\tilde{\sigma}_{1}, \tilde{\sigma}_{2} \in \Irr_{\text{gen}}(W)$ such that $\Psi(\tilde{\sigma}_{1}) \geq \Psi(\tilde{\sigma}_{2})$ in the closure ordering. Then 
    $ \tilde{\sigma}_{1}(\Omega_\Tw) \geq \tilde{\sigma}_{2}(\Omega_\Tw) $.
\end{remark}

\subsection{The Spaltenstein Map}

In this section, we give a brief review of the Spaltenstein map between nilpotent orbits, which is a natural generalisation of the transpose map between partitions for type A. We first give the formal definition of Spaltenstein map, which is essentially the same as Spaltenstein's original definition in \cite{Spaltensteindmapbook}.

\begin{definition}
Fix a semisimple Lie algebra $\g$, its connected reductive Lie group $G$, and $G \backslash \n$ the ordered (natural ordering based on closure relations) set of nilpotent orbits of $\g$. Let $\p$ be a parabolic subalgebra of $\g$. Let $\O_\p=\Ind_{\p}^{\g}(\O_0) \in G \backslash \n$ be the `Richardson orbit' corresponding to $\p$. We also attach to $\p$ a nilpotent orbit $\hat{\O_\p} \in G \backslash \n$ containing the regular nilpotent elements of a Levi subalgebra $\mathfrak{l}$ of $\p$. The \textbf{Spaltenstein map} $d:G \backslash \n \to G \backslash \n$ is a map satisfying the following conditions: \begin{enumerate}
\item For all parabolic subalgebras $\p$ of $\g$, $d(\hat{\O_\p})=\O_\p$.
\item $x \leq d^2(x)$ for all $x \in G \backslash \n$.
\end{enumerate}
\end{definition}

\begin{remark}
The above properties uniquely determine the map for classical types and $E_7$. For the remaining exceptional types, a choice can be made, see Section 9 of \cite{Spaltensteindmapbook} for details.
\end{remark}

In type A, the map sending the nilpotent orbit $\mathcal{O}_{\lambda}$ to $\mathcal{O}_{\lambda^t}$ satisfies the conditions above, because we can see that $\hat{\O}_{\p(d)}=\O_d$, and $\O_{\p(d)}=\O_{d^t}$. The uniqueness of this map is confirmed by Theorem 1.5, Chapter 3 of \cite{Spaltensteindmapbook}.

The explicit description of Spaltenstein's map in other classical types relies on the notion of B (resp. C,D)-collapses.

\begin{definition}[Lemma 6.3.3 of \cite{collingwood1993nilpotent}]
\label{CollapseOfPartition}
    Let $\lambda=(a_1,...,a_{2n+1}) \in P(2n+1)$ be any partition of $2n+1$, allowing zeroes. There exists a unique largest partition in $P_1(2n+1)$ dominated by $\lambda$, called $\lambda_B$, the \textbf{B-collapse} of $\lambda$. We obtain $\lambda_B$ from $\lambda$ in the following way. If $\lambda$ is not in $P_1(2n+1)$, then it must have even parts occurring with odd multiplicity. Let the largest such part be $q$. Replace the last occurrence of $q$ by $q-1$, and replace the first subsequent part $r$ strictly less than $q-1$ by $r+1$. Note it may be the case that $r=0$. Repeat this process until we obtain a partition in $P_1(2n+1)$. We obtain the \textbf{C-collapse and D-collapse} of some $\lambda \in P(2n)$ is the same way. They are also unique largest partitions in $P_{-1}(2n)$ and $P_1(2n)$ respectively that are dominated by $\lambda \in P(2n)$.
\end{definition}

\begin{proposition}[Theorem 6.3.5 in \cite{collingwood1993nilpotent}]
\label{SpaltensteinTypeBCD}
The maps $d:\lambda \mapsto (\lambda^t)_{B}$ (resp. $\lambda \mapsto (\lambda^t)_{C}, \lambda \mapsto (\lambda^t)_{D}$ are the unique Spaltenstein maps for type B,C, and D.
\end{proposition}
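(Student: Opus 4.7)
The plan is to verify the two defining axioms of a Spaltenstein map for the proposed $d_X : \lambda \mapsto (\lambda^t)_X$ (with $X \in \{B, C, D\}$), and then to address uniqueness. Axiom (2) is essentially formal: the $X$-collapse is by definition the maximum element of $P_X$ dominated by its argument, so $(\lambda^t)_X \leq_{\mathrm{dom}} \lambda^t$; transposing reverses the dominance order, yielding $\lambda \leq_{\mathrm{dom}} ((\lambda^t)_X)^t$; and since $\lambda \in P_X$, the maximality clause of the collapse applied to $((\lambda^t)_X)^t$ then forces $\lambda \leq_{\mathrm{dom}} (((\lambda^t)_X)^t)_X = d_X^2(\lambda)$, as required.

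The main work is axiom (1), which demands explicit descriptions of both $\hat{\O_\p}$ and $\O_\p$. A parabolic subalgebra $\p$ of $\g$ in type B, C, or D has a Levi factor of the form $\gl(d_1) \oplus \cdots \oplus \gl(d_k) \oplus \g'$, where $\g'$ is a classical Lie algebra of the same type and smaller rank. Embedded in the standard representation of $\g$, the regular nilpotent of this Levi has Jordan partition $\hat\mu = (d_1, d_1, \ldots, d_k, d_k) \cup \mu'$, with $\mu'$ the principal partition of $\g'$ (a single Jordan block of size $2m+1$ or $2m$, or $(2m-1,1)$, depending on type). The Richardson orbit $\O_\p = \Ind_\p^\g(0)$ is described in classical types by the Kempken--Spaltenstein induction formula, which expresses the induced partition as an explicit $X$-collapse built from the Levi data. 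The heart of the proof is then the combinatorial identity $\O_\p = ((\hat\mu)^t)_X$, which I would verify by running the collapse algorithm on $(\hat\mu)^t$ and matching its output with the induction formula. I expect this combinatorial verification, with its case analysis according to type and to parities of the $d_i$ and parts of $\mu'$, to be the main obstacle.

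For uniqueness, types B, C, D are subtler than type A because the set $\{\hat{\O_\p}\}$ does not exhaust $G \backslash \n$, so axiom (1) alone does not determine $d$. To handle this I would first observe (or cite) that any Spaltenstein map is order-reversing on $G \backslash \n$ and that its fibres coincide with the ``special pieces'' of $G \backslash \n$, which are intervals in the closure ordering whose endpoints are both of the form $\hat{\O_{\mathfrak{q}}}$ for suitable parabolics $\mathfrak{q}$. Then, for any orbit $\lambda$, one sandwiches $\lambda$ in such a special piece bounded by some $\hat{\O_\p}, \hat{\O_{\mathfrak{q}}}$ with $\O_\p = \O_{\mathfrak{q}} = d_X(\lambda)$; axiom (1) applied at both endpoints, combined with order-reversing, then pins $d(\lambda)$ to $d_X(\lambda)$ and completes the proof.
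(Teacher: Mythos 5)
The paper offers no proof of this proposition: it is imported wholesale from Theorem 6.3.5 of \cite{collingwood1993nilpotent}, so there is nothing internal to compare your argument against. On its own terms, your verification of axiom (2) is correct, and your plan for axiom (1) follows the standard route (the identity $\O_\p=((\hat\mu)^t)_X$ is the Kempken--Spaltenstein induction formula for Richardson orbits, essentially Corollary 7.3.3 of the same reference); but you leave that identity, which is the entire content of axiom (1), as an unexecuted ``combinatorial verification,'' so that half of the proof is a plan rather than an argument.

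The uniqueness step contains a genuine error. First, order-reversal is \emph{not} a consequence of the two axioms as stated, so it cannot simply be ``observed'': in $\g=\mathfrak{sp}(6)$ the orbits of the form $\hat{\O_\p}$ are exactly those with partitions $(6)$, $(4,1,1)$, $(3,3)$, $(2,2,2)$, $(2,2,1,1)$, $(2,1,1,1,1)$, $(1^6)$ --- every element of $P_{-1}(6)$ except $(4,2)$ --- and the map agreeing with $d_C$ everywhere but sending $(4,2)$ to itself satisfies both axioms (one checks $x\le d^2(x)$ case by case) while differing from $d_C$ and failing to be order-reversing. Hence order-reversal must be imposed as an independent hypothesis, as the paper implicitly does when it later lists it among the ``properties'' of $d$. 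Second, the special-piece sandwich breaks down precisely where it is needed: $(4,2)$ is special (its transpose $(2,2,1,1)$ lies in $P_{-1}(6)$), so it is the maximal element of its own special piece, yet it is not of the form $\hat{\O_{\mathfrak{q}}}$ for any parabolic $\mathfrak{q}$; the claim that both endpoints of a special piece have this form is false. Moreover, identifying the fibres of an arbitrary map satisfying the axioms with the special pieces is essentially the uniqueness assertion itself, so invoking it is circular. A correct uniqueness argument should instead combine axiom (1) on the orbits $\hat{\O_\p}$ with order-reversal and axiom (2) to squeeze $d(\lambda)$ at the remaining orbits (in the example above these constraints do force $d((4,2))=(2,2,1,1)$). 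Finally, in type $D$ the partition-level statement does not resolve the two orbits attached to a very even partition, a point you do not address.
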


\begin{definition}
    We call a nilpotent orbit $\O$ \textit{special} if it lies in the range of the Spaltenstein map $d$. We also call a character $\sigma \in\Irr(W)$ special if $\sigma =\sigma_{\O,\triv}$ for some special orbit $\O$.
\end{definition}

To conclude this section, we summarise the properties of Spaltenstein's map, special orbits, and special characters for future use:
\begin{enumerate}
    \item $d^2(\O)\ge\O$.
    \item $d$ is a order-reversing map: $\O_1\ge \O_2\implies d(\O_1)\le d(O_2)$.
    \item $d$ is an involution on the special orbits.
    \item $d$ corresponds to tensoring with $\sgn$ on the level of special characters via the Springer correspondence. Namely, if $\O$ is a special orbit, then $\sigma_{\O,\triv}\otimes\sgn=\sigma_{d(\O),\triv}$.

\end{enumerate}

\section{Lusztig Families and Restriction on \texorpdfstring{$W$}{}-types}

Let $\mathfrak{g}$ be a simple Lie algebra with Cartan subalgebra $\mathfrak{h}$ and Weyl group $W$. Fix a choice of the unique spinor modules $S^{+}$ and $S^{-}=S^{+}\otimes \sgn$ (resp. $S$) of $\Pin(\mathfrak{h})$, if dim$(\mathfrak{h})$ is even (resp. odd). Set $\SF=S^{+}+S^{-}$ (resp. $\SF=S$) if dim($\mathfrak{h})$ is even (resp. odd).

Our approach uses the following theorem on restricting the $\widetilde{W}$-types that can occur in $\sigma \otimes \SF$, for fixed $\sigma \in \Irr(W)$. 

\begin{theorem}[Corollary 6.10 of \cite{CIUBOTARU20151}]
\label{controlwtype}
Let $e\in \mathcal{N}$, $\phi\in \Irr_0(A(e))$, and $\widetilde{\sigma} \in \Irrg(\Tw)$. If $\langle\sigma_{e,\phi}\otimes \SF,\widetilde{\sigma}\rangle_\Tw\neq 0$, then $\widetilde{\sigma}$ corresponds via $\Psi$ to some $e'\in \mathcal{N}_{\text{sol}}$ for $e'\ge e$ in the closure ordering.
\end{theorem}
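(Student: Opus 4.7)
The strategy is to pass to the graded affine Hecke algebra $\mathbb{H}$ attached to the root system of $\mathfrak{g}$ and apply the Dirac operator machinery of \cite{barbasch2012dirac}. In broad strokes, the plan is to embed the problem into a Hecke-algebra tensor product calculation, use a Dirac inequality to control the scalar action of $\Omega_{\Tw}$, and then translate this scalar information back into the closure ordering via Theorem \ref{generalisedSpringer}(1).

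First I would invoke the Kazhdan--Lusztig/Springer parameterisation of tempered $\mathbb{H}$-modules to realise $\sigma_{e,\phi}$ as a $W$-subrepresentation of the restriction $X(e,\phi)|_W$ of a tempered $\mathbb{H}$-module $X(e,\phi)$ whose central character is represented by the neutral element $h_e$ of a Jacobson--Morozov triple through $e$. Any $\Tsig$ occurring in $\sigma_{e,\phi} \otimes \SF$ then also occurs in $X(e,\phi)|_W \otimes \SF$, so it suffices to prove the conclusion for the larger tensor product.

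Next I would apply the Dirac inequality. The Dirac element $D \in \mathbb{H} \otimes C(V)$ satisfies an identity of the form $D^2 = \Omega_{\Tw} - \Omega_{\mathbb{H}} \otimes 1$, up to a normalisation. Because $X(e,\phi)$ is unitary and tempered, $D$ acts as a self-adjoint operator on $X(e,\phi) \otimes \SF$, so $D^2 \geq 0$; this forces
\[
\Tsig(\Omega_{\Tw}) \;\geq\; \langle h_e, h_e \rangle
\]
for every $\Tw$-type $\Tsig$ appearing in $X(e,\phi)|_W \otimes \SF$, with equality precisely when $\Tsig$ contributes to the Dirac cohomology $H^D(X(e,\phi))$. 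Combined with Theorem \ref{generalisedSpringer}(1), which evaluates $\Tsig(\Omega_{\Tw}) = \langle h_{e'}, h_{e'}\rangle$ whenever $\Psi(\Tsig) = G\cdot e'$, this yields $\langle h_{e'}, h_{e'}\rangle \geq \langle h_e, h_e\rangle$.

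Finally I would upgrade this norm inequality to the closure ordering $G\cdot e' \geq G\cdot e$. Since the converse of Remark \ref{lengthofh} can fail---incomparable orbits may share the same value of $\langle h,h\rangle$---the scalar comparison alone is insufficient. The remedy is to use that $\Tsig$ actually lies in the Dirac cohomology of some irreducible $\mathbb{H}$-module $Y$ whose central character is a $W$-orbit through $h_{e'}$; a Langlands-classification argument then compares the central characters of $X(e,\phi)$ and $Y$ and promotes the scalar comparison to the closure ordering on nilpotent orbits. The main obstacle will be exactly this final step: passing from a comparison of $h$-norms to the strictly finer closure ordering requires the geometric content of the Springer correspondence together with careful bookkeeping in the Langlands/Kazhdan--Lusztig classification of $\mathbb{H}$-modules, rather than purely character-theoretic manipulations.
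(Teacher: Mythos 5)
The paper does not actually prove this statement: it is imported wholesale as Corollary~6.10 of \cite{CIUBOTARU20151}, so there is no internal argument to compare against. Judged on its own terms, your sketch correctly identifies the framework used in that reference: realising $\sigma_{e,\phi}$ inside a tempered graded-Hecke-algebra module $X(e,\phi)$ with central character determined by $h_e$, and applying the Dirac inequality $D^2\ge 0$ together with Theorem~\ref{generalisedSpringer}(1) to obtain $\langle h_{e'},h_{e'}\rangle\ge\langle h_e,h_e\rangle$ for $\Psi(\Tsig)=G\cdot e'$. Those two steps are sound and are genuinely part of the machinery behind the cited result.

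The gap is exactly where you locate it, but your proposed remedy does not close it. Knowing that $\Tsig$ lies in the Dirac cohomology of some module $Y$ with central character through $h_{e'}$, and comparing the central characters of $X(e,\phi)$ and $Y$, only reproduces the norm comparison $\langle h_{e'},h_{e'}\rangle\ge\langle h_e,h_e\rangle$: the Langlands/Kazhdan--Lusztig classification attaches central characters to individual modules but supplies no order relation between the two nilpotent orbits, and since incomparable orbits can share or interleave $h$-norms, no amount of bookkeeping at the level of central characters can recover the closure ordering. The missing input is geometric and combinatorial rather than classification-theoretic: one uses that $X(e,\phi)|_W$ is the \emph{full} cohomology $H^{\bullet}(\mathscr{B}^G_e)_\phi$, whose $W$-constituents are exclusively of the form $\sigma_{e'',\phi''}$ with $e''\ge e$ (upper-triangularity of the Green-polynomial matrix with respect to the closure order), and then an induction along the closure ordering combined with the characterisation of $\Psi(\Tsig)$ as the maximal orbit $e''$ for which $\Tsig$ occurs in some $\sigma_{e'',\phi''}\otimes\SF$. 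Without this triangularity your argument proves only the scalar inequality, which is strictly weaker than the theorem.
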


\begin{remark}
Since $\SF=\SF^*$, we always have $\langle\sigma_{e,\phi}\otimes \SF,\widetilde{\sigma}\rangle_\Tw=\langle\sigma_{e,\phi},\widetilde{\sigma}\otimes \SF\rangle_W$ by duality. So the theorem can also interpreted as restricting the $W$-types that can occur in $\widetilde{\sigma}\otimes\SF$ for a fixed $\widetilde{\sigma}\in \Irrg(\Tw)$.
\end{remark}

The above theorem only gives us an upper bound for $W$-types appearing in $\Tsig\otimes\SF$. However, a lower bound can also be achieved using Spaltenstein's d-map and the so-called Lusztig families, which gives us control over the possible $W$-types that can appear in the tensor product.

We now briefly describe Lusztig's construction of a partition of $\Irr(W)$ into \textbf{Lusztig families} as in \cite{lusztig1982class} (see also \cite{carter1993finite}). First, Lusztig introduced a notion of $\Tilde{j}$-induction, which is roughly speaking a truncated induced character from a parabolic subgroup. One can then define the notion of \say{cells} inductively. We can assign an integer called the \textbf{a-value} to each cell.

\begin{definition}
    The set of cells is the smallest set of (not necessarily irreducible) representations of $W$ satisfying the following conditions:
    \begin{enumerate}
        \item for $W=\{1\}$, the unit representation is a cell.
        \item if $\phi$ is a cell of a parabolic subgroup $W_J$, then $\Tilde{j}(\phi)$ and $\Tilde{j}(\phi)\otimes\sgn$ are cells of $W$.
    \end{enumerate}
\end{definition}

The cells have the property that each irreducible character of $W$ is a component of at least one cell; each cell contains a unique special character as component with multiplicity 1; two cells for which the special components are distinct have no common irreducible components.
We say two characters $\phi,\phi'$ are equivalent if there exists cells $c,c'$ such that:
\begin{enumerate}
    \item $\phi$ appears in $c$, $\phi'$ appears in $c'$.
    \item the unique special characters in $c$ and $c'$ coincide.
\end{enumerate}
This equivalence relation induces the partition of $\Irr(W)$ into Lusztig families. For the classical types, there are combinatorial descriptions of the Lusztig families using so-called \say{symbols}. We will refer to this description whenever needed.

From the construction above, one easily sees that each family $F$ contains a special character $\sigma_F$. Via the Springer correspondence, we can associate a special orbit $\mathcal{O}_F$ with $\sigma_F$ for each family $F$. Moreover, the notion of family interacts well with the Spaltenstein map as follows. Let $\sigma$ be in Lusztig family $F$, and $\O_\sigma$ the associated nilpotent orbit via the Springer correspondence. Then $d^2(\O_\sigma)$ is the unique minimal special orbit above $\O_\sigma$, which is precisely $\O_F$. In addition, $\sigma\otimes\sgn$ lies in the Lusztig family $F'$ such that $\O_{F'}=d(\O_F)$.

An important observation is the following. According to our definition of $\SF$, we see that $\SF \otimes \sgn=\SF$. Consider $\sigma \in F$. We have the identity $\sigma \otimes \SF =\sigma \otimes \sgn \otimes \SF$, and we may view $\sigma\otimes \sgn$ as a $W$-type in Lusztig family $F'$ such that $\mathcal{O}_{F'}=d(\mathcal{O}_F)$. Hence the next corollary follows:

\begin{corollary}
\label{enhancedwtype}
Let $\widetilde{\sigma}\in \Irrg(\Tw)$ be attached to a solvable nilpotent orbit $\mathcal{O}$. Suppose $\langle\sigma ,\widetilde{\sigma}\otimes \SF\rangle_W\neq0$ and $\sigma$ belongs to Lusztig family $F$. Then $\mathcal{O} \ge \mathcal{O}_F \ge d(\mathcal{O})$.
\end{corollary}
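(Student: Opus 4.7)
The plan is to apply Theorem \ref{controlwtype} twice, once to $\sigma$ and once to $\sigma\otimes\sgn$, and then leverage the structure of Lusztig families together with the Spaltenstein map. The observation immediately preceding the corollary gives the identity $\sigma\otimes\SF=(\sigma\otimes\sgn)\otimes\SF$ as $\Tw$-representations (using $\sgn\otimes\SF=\SF$), and duality then yields
\[
\langle\sigma,\Tsig\otimes\SF\rangle_W=\langle\sigma\otimes\sgn,\Tsig\otimes\SF\rangle_W,
\]
so both pairings are nonzero. Writing $\sigma=\sigma_{e,\phi}$ via the Springer correspondence and setting $\mathcal{O}_\sigma=G\cdot e$, Theorem \ref{controlwtype} applied to each of $\sigma$ and $\sigma\otimes\sgn$ produces the closure-ordering inequalities $\mathcal{O}\ge\mathcal{O}_\sigma$ and $\mathcal{O}\ge\mathcal{O}_{\sigma\otimes\sgn}$.

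The next step is to promote these to inequalities involving the special orbits $\mathcal{O}_F$ and $d(\mathcal{O}_F)$. The properties summarised in the preceding subsection give $\mathcal{O}_\sigma\le\mathcal{O}_F=d^2(\mathcal{O}_\sigma)$ with $\mathcal{O}_F$ the unique minimal special orbit above $\mathcal{O}_\sigma$, while $\sigma\otimes\sgn$ lies in the family $F'$ with $\mathcal{O}_{F'}=d(\mathcal{O}_F)$, which is likewise the minimal special orbit above $\mathcal{O}_{\sigma\otimes\sgn}$. The key supplementary input required is that every solvable nilpotent orbit is special; this can be read off the combinatorial classifications in the preliminaries (in classical types, solvable orbits are exactly those whose partitions have all odd, respectively even, parts with multiplicities at most two, which coincide with the Bala--Carter distinguished orbits) and verified by inspection for the exceptional types. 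Granting this, $\mathcal{O}$ itself is special, so the minimality statements upgrade the two inequalities to $\mathcal{O}\ge\mathcal{O}_F$ and $\mathcal{O}\ge d(\mathcal{O}_F)$.

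To conclude, I would apply the order-reversing map $d$ to $\mathcal{O}\ge d(\mathcal{O}_F)$; using that $\mathcal{O}_F$ is special so $d^2(\mathcal{O}_F)=\mathcal{O}_F$, this gives $d(\mathcal{O})\le\mathcal{O}_F$. Combining with $\mathcal{O}\ge\mathcal{O}_F$ yields the desired chain $\mathcal{O}\ge\mathcal{O}_F\ge d(\mathcal{O})$. I expect the main obstacle to be justifying that solvable nilpotent orbits are special, since without this fact neither inequality follows from Theorem \ref{controlwtype} alone; the argument reduces to a case-by-case check against the combinatorial descriptions listed earlier in the paper, together with the observation that any special orbit above both $\mathcal{O}_\sigma$ and $\mathcal{O}_{\sigma\otimes\sgn}$ must dominate $\mathcal{O}_F$ and $d(\mathcal{O}_F)$ respectively.
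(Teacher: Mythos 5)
Your argument is correct and is essentially the paper's own proof: the paper obtains $\mathcal{O}_F=d^2(\mathcal{O}_\sigma)\le d^2(\mathcal{O})=\mathcal{O}$ (implicitly using, as you rightly flag, that the solvable orbit $\mathcal{O}$ is special so that $d^2(\mathcal{O})=\mathcal{O}$) and then applies the order-reversing involution $d$ to $\mathcal{O}_{F'}\le\mathcal{O}$, which is exactly your chain of deductions phrased via the minimality of $\mathcal{O}_F$ among special orbits above $\mathcal{O}_\sigma$. One small correction: solvable orbits are not the Bala--Carter distinguished orbits (distinguished partitions have distinct parts, whereas solvable ones allow multiplicity two), but the fact you actually need --- that every solvable orbit is special --- does hold and can be checked from the combinatorial descriptions as you propose.
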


\begin{proof}
Let $\O_\sigma$ be the nilpotent orbit corresponding to $\sigma$ via Springer correspondence. By Theorem \ref{controlwtype}, we know $\mathcal{O}_\sigma \leq \O$. Since $d^2$ preserves ordering, we see that $\O_F=d^2(\O_\sigma)\leq d^2(\O)=\O$. Similarly, by the above discussion we know $\O_{F'}\leq \O$. It follows that $\O_F=d(\O_{F'}) \ge d(\O)$, since $d$ is an order reversing involution on the set of special orbits.
\end{proof}

\section{Saxl Conjecture and its Generalisation}

\subsection{Motivation and the Generalised Saxl Conjecture}

Corollary \ref{enhancedwtype} is particularly powerful when $\widetilde{\sigma}$ corresponds to a \say{small} solvable orbit $\O$ with respect to the closure ordering. As shown in \cite{CIUBOTARU20221}, there exists a unique minimal solvable orbit for each simple Lie algebra. Therefore, it's natural to wonder about its consequences regarding the minimal solvable orbit of a simple Lie algebra.

As a first example, Corollary \ref{enhancedwtype} can already be used to efficiently attack Saxl's conjecture, which we restate here.

\begin{conjecture}[Saxl]
    Let $\lambda=(k,k-1,\cdots,1)$ be the staircase partition of $n=k(k+1)/2$. Then $\sigma_\lambda \otimes \sigma_\lambda$ contains all irreducible characters of $S_n$ as constituents.
\end{conjecture}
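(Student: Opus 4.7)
The plan is to invoke Corollary \ref{enhancedwtype} to reformulate Saxl's conjecture as a statement about spin representations, and then attempt that spin version. First I would check the Lie-theoretic data attached to $\lambda=(k,k-1,\ldots,1)$: it is a strict partition of $n=k(k+1)/2$, so by the parameterisation of Section 2 the associated orbit $\mathcal{O}_\lambda\subseteq\mathfrak{sl}_n$ lies in $G\backslash\n_{\sol}$, and since the Spaltenstein map in type $A$ is $\mathcal{O}_\mu\mapsto\mathcal{O}_{\mu^t}$ and $\lambda=\lambda^t$, the orbit is self-dual: $d(\mathcal{O}_\lambda)=\mathcal{O}_\lambda$. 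In type $A$ every Lusztig family is a singleton, hence the family $F$ containing $\sigma_\lambda$ is $\{\sigma_\lambda\}$ and $\mathcal{O}_F=\mathcal{O}_\lambda$. Now choose $\Tsig\in\Psi^{-1}(\mathcal{O}_\lambda)$ as in Theorem \ref{generalisedSpringer}(2), so that $\sigma_\lambda=\sigma_{e,\triv}$ occurs in $\Tsig\otimes\SF$. Corollary \ref{enhancedwtype} then forces any $W$-type $\sigma'$ appearing in $\Tsig\otimes\SF$ to lie in a family $F'$ with $\mathcal{O}_\lambda\geq\mathcal{O}_{F'}\geq d(\mathcal{O}_\lambda)=\mathcal{O}_\lambda$, so $F'=F$ and $\sigma'=\sigma_\lambda$. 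Therefore, as $W$-modules,
\[
\Tsig\otimes\SF\;=\;m\,\sigma_\lambda\qquad\text{for some }m\geq 1.
\]

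Squaring this identity and invoking Remark \ref{SFotimesSF} ($\SF\otimes\SF=a_V\bigwedge\nolimits^\bullet V$) yields
\[
m^2\,\sigma_\lambda^{\otimes 2}\;=\;(\Tsig\otimes\SF)^{\otimes 2}\;=\;a_V\,\Tsig^{\otimes 2}\otimes\bigwedge\nolimits^\bullet V,
\]
so $\sigma_\lambda^{\otimes 2}$ and $\Tsig^{\otimes 2}\otimes\bigwedge\nolimits^\bullet V$ share the same irreducible support. Hence Saxl's conjecture is \emph{equivalent} to the spin statement: for every $\mu\vdash n$, $\sigma_\mu$ is a constituent of $\Tsig^{\otimes 2}\otimes\bigwedge\nolimits^\bullet V$. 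This is precisely the spin reformulation of the generalised Saxl conjecture recorded in the introduction.

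To attack this spin version, for each $\mu\vdash n$ I would seek an integer $0\leq i\leq n-1$ and a genuine irreducible constituent $\widetilde{\tau}$ of $\Tsig\otimes\sigma_\mu$ that also occurs in $\Tsig\otimes\bigwedge\nolimits^i V$. In type $A$ the exterior powers $\bigwedge\nolimits^i V$ are the hook characters $\sigma_{(n-i,1^i)}$, and the genuine characters of $\widetilde{S}_n$ (supported on classes of strict-partition cycle type) are computable via Morris's recursion, so the question reduces to explicit, if still intricate, combinatorics on Young tableaux and strict partitions. Corollary \ref{enhancedwtype} additionally restricts $\widetilde{\tau}$ to lie over an orbit $\geq\mathcal{O}_\lambda$ in the closure order, cutting the candidate set sharply.

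The main obstacle is that this reformulation does not actually dissolve the core difficulty of Saxl: it exchanges one intractable tensor square $\sigma_\lambda^{\otimes 2}$ for another, $\Tsig^{\otimes 2}$, twisted by $\bigwedge\nolimits^\bullet V$. Although $\Tsig^{\otimes 2}$ has sparser support than $\sigma_\lambda^{\otimes 2}$ (spin characters vanish on classes with any repeated even part) and admits sharper Murnaghan–Nakayama-type formulas, showing that every single $\sigma_\mu$ appears still requires delicate combinatorial input. I would expect this approach to yield Saxl-type results for large explicit families of $\mu$ --- mirroring what the authors later achieve for the exceptional types via the same Corollary --- but a complete proof by this route would require a genuinely new identity relating spin squares of $\widetilde{S}_n$ to hook-character tensors, which, as the authors' own partial results in type $A$ suggest, is not yet available.
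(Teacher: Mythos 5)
The statement you were asked to prove is Saxl's conjecture itself, which is open; the paper does not prove it either, but merely restates it and extracts partial results. Your reduction is correct and is in fact \emph{exactly} the paper's strategy: Corollary \ref{enhancedwtype} applied to the self-dual minimal solvable orbit $\mathcal{O}_\lambda$ (with each type-$A$ family a singleton) forces $\Tsig\otimes\SF=m\,\sigma_\lambda$, and squaring via Remark \ref{SFotimesSF} shows $\sigma_\lambda^{\otimes 2}$ and $\Tsig^{\otimes 2}\otimes\bigwedge^{\bullet}V$ have the same irreducible support --- this is precisely the content of Proposition \ref{saxlA-1} and of the type-$A$ case of Theorem \ref{getallfamily} with its corollary. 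Where the paper stops is also where you stop: from this identity the paper extracts only the hook constituents (by noting $\Tsig\otimes\Tsig^{*}$ contains the trivial representation, so all of $\bigwedge^{\bullet}V$ appears), and the full conjecture would require showing every $\sigma_\mu$ occurs in $\Tsig^{\otimes 2}\otimes\bigwedge^{\bullet}V$, which neither you nor the authors establish. So there is no error in your argument, but it is a reformulation plus a partial result, not a proof; your own closing assessment of the obstacle is accurate and matches the state of the paper. One minor point worth making explicit if you write this up: the multiplicity $m\geq 1$ follows from Theorem \ref{generalisedSpringer}(3), which guarantees $\Tsig\otimes\SF\neq 0$.
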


We take the viewpoint that $S_n$ is the Weyl group of Lie algebra $\mathfrak{g}=\mathfrak{sl}_{n}$. The representation theory of $S_n$ and its Pin cover is phrased in terms of partitions of $n$, and a nilpotent orbit is solvable if and only if it corresponds to a strict partition. Each $S_{n}$-type is its own Lusztig family, and the map $d$ is given by transpose $^t$ of the Young diagrams.

The observation is that in the setup of Saxl's conjecture, $\lambda$ corresponds to the minimal solvable orbit $\O$ of $\mathfrak{g}$. Additionally, $\lambda=\lambda^t$ and therefore $\O=d(\O)$. Corollary \ref{enhancedwtype} implies that any $W$-types appearing in $\widetilde{\sigma}\otimes \SF$ must lie in the Lusztig family $F$ corresponding to $\O$, which consists of a single element $\lambda$. The following result was proved by Bessenrodt in \cite{BessenrodtSaxlConjecture} by considering spin representations. However, we still provide a short proof that fits well into our setting.

\begin{proposition}
    \label{saxlA-1}
    Let $\lambda=(k,k-1,\cdots,1)$ be the staircase partition of $n=k(k+1)/2$. Then $\sigma_\lambda \otimes \sigma_\lambda$ contains all irreducible characters of $S_n$ corresponding to hooks, i.e. partitions of the form $(n-m,1^m)$.
\end{proposition}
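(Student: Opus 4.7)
The plan is to exploit the rigidity of the spin tensor $\Tsig\otimes\SF$ when $\Tsig$ is attached to the minimum solvable orbit. The staircase $\lambda=(k,k-1,\ldots,1)$ is the unique minimum element in the dominance order on strict partitions of $n$, so it indexes the unique minimum solvable orbit $\O_\lambda$ of $\mathfrak{sl}_n$. Moreover $\lambda=\lambda^t$ gives $d(\O_\lambda)=\O_\lambda$, and in type $A$ every Lusztig family is a singleton, so the family $F$ with $\O_F=\O_\lambda$ is exactly $\{\sigma_\lambda\}$.

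I would first pin down $\Tsig\otimes\SF$ for a suitable $\Tsig$. Applying Theorem \ref{generalisedSpringer}(2) to $e\in\O_\lambda$ and $\phi=\triv$ (using that $A(e)$ is trivial in type $A$) produces some $\Tsig\in\Psi^{-1}(\O_\lambda)$ together with a spinor $S$ for which $\sigma_\lambda$ occurs in $\Tsig\otimes S$, hence in $\Tsig\otimes\SF$. For this $\Tsig$, Corollary \ref{enhancedwtype} says that any $W$-constituent $\sigma$ of $\Tsig\otimes\SF$, lying in some Lusztig family $F'$, must satisfy $\O_\lambda\ge\O_{F'}\ge d(\O_\lambda)=\O_\lambda$, forcing $\O_{F'}=\O_\lambda$ and hence $\sigma=\sigma_\lambda$. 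Together these yield the rigid identity
\[
\Tsig\otimes\SF=m\cdot\sigma_\lambda \quad \text{as } W\text{-representations, for some } m\ge 1.
\]

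To extract the hooks, I would dualise. Since $\sigma_\lambda$ has real character and $\SF$ is self-dual as a $\Tw$-representation (a standard fact about the spinor modules of $\Pin(\mathfrak{h})$), the identity above also gives $\Tsig^*\otimes\SF=m\,\sigma_\lambda$. Multiplying the two identities, regrouping factors, and invoking Remark \ref{SFotimesSF} to replace $\SF\otimes\SF$ by $a_V\bigwedge^{\bullet}V$, I obtain
\[
m^2\,\sigma_\lambda^{\otimes 2}=a_V\,(\Tsig\otimes\Tsig^*)\otimes\bigwedge\nolimits^{\bullet}V.
\]
The trivial $W$-character appears in $\Tsig\otimes\Tsig^*$ with multiplicity one (by irreducibility of $\Tsig$), so $\bigwedge^{\bullet}V$ embeds as a $W$-subrepresentation of the right-hand side. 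Since the hook characters of $S_n$ are precisely $\sigma_{(n-j,1^j)}=\bigwedge^{j}V$ for $0\le j\le n-1$, every hook then occurs as a constituent of $\sigma_\lambda^{\otimes 2}$.

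The only minor subtlety is the self-duality of $\SF$; if one prefers to sidestep it, one can square $\Tsig\otimes\SF=m\sigma_\lambda$ directly to obtain $a_V\Tsig^{\otimes 2}\otimes\bigwedge^{\bullet}V=m^2\sigma_\lambda^{\otimes 2}$, and then verify self-duality of the specific $\Tsig$ attached to the staircase orbit to conclude that $\triv_W$ embeds in $\Tsig^{\otimes 2}$. Either way, the heart of the argument is the rigid identity from the first step, and the hook case falls out simply because $\bigwedge^\bullet V$ is produced automatically by $\SF\otimes\SF$.
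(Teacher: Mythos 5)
Your proposal is correct and follows essentially the same route as the paper: use Corollary \ref{enhancedwtype} to show $\Tsig\otimes\SF$ is $\sigma_\lambda$-isotypic, apply Remark \ref{SFotimesSF} to produce $\bigwedge^{\bullet}V$, find the trivial character in the endomorphism algebra of $\Tsig$, and identify the hooks with the exterior powers of the reflection representation. Your only variation — working with $\Tsig\otimes\Tsig^{*}$ directly rather than first deducing $\Tsig\cong\Tsig^{*}$ from reality of characters, as the paper does — is cosmetic.
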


\begin{proof}
Let $\widetilde{\sigma}_\lambda$ be the unique genuine representation of $\widetilde{S_n}$ attached to $\lambda$ by the map $\Psi$. By Corollary \ref{enhancedwtype}, we see the only $S_{n}$-type inside $\widetilde{\sigma}_\lambda \otimes \SF$ is $\sigma_\lambda$. Therefore ${\sigma_\lambda}^{\otimes 2}$ and $(\widetilde{\sigma}_\lambda\otimes \SF)^{\otimes 2}$ have the same constituents. But $(\widetilde{\sigma}_\lambda\otimes \SF)^{\otimes 2}=(\widetilde{\sigma}_\lambda\otimes \widetilde{\sigma}_\lambda)\otimes (\SF \otimes \SF)=a_V(\widetilde{\sigma}_\lambda\otimes \widetilde{\sigma}_\lambda)\otimes \bigwedge^{\bullet}(V)$ using Remark \ref{SFotimesSF}.

Since $\sigma_\lambda$ has real character, so does $(\widetilde{\sigma}_\lambda \otimes \SF)$. But $\SF$ also has real character, so $\widetilde{\sigma}_\lambda$ does also. Hence $\widetilde{\sigma}_\lambda = \widetilde{\sigma}_\lambda^{*}$ is its own contragredient, so $\widetilde{\sigma}_\lambda\otimes \widetilde{\sigma}_\lambda = \text{Hom}(\widetilde{\sigma}_\lambda, \widetilde{\sigma}_\lambda)$ contains a copy of the trivial representation. 

On the other hand, each graded component of $\bigwedge^{\bullet}(V)$ is an irreducible $S_{n}$-type:
$\bigwedge^m(V)$ corresponds to the irreducible character indexed by the hook partition $(n-m,1^m)$.
\end{proof}

With this setup, we can naturally generalise Saxl's conjecture to Weyl groups. 

\begin{conjecture}[Generalised Saxl Conjecture for Weyl Groups]
\label{gen_Saxl_conj_ver_I}
    Let $\mathfrak{g}$ be a simple Lie algebra with Weyl group $W$. Let $\O$ be the unique minimal solvable orbit of $\mathfrak{g}$.
    Then $$\Big(\bigoplus_{\sigma\in F: \ \O \ge \O_F \ge d(\O)} \sigma \ \Big)^{\otimes 2} $$ contains all irreducible characters of $W$, where $\O_F$ is the special orbit attached to family $F$.
\end{conjecture}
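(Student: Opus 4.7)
My plan is to translate the conjecture into a statement about genuine spin representations of $\Tw$, so that Corollary \ref{enhancedwtype} and the identity $\SF\otimes\SF=a_V\bigwedge^\bullet V$ from Remark \ref{SFotimesSF} give substantial leverage. Fix the minimal solvable orbit $\O$ and choose any $\Tsig\in\Psi^{-1}(\O)$; when $|\Psi^{-1}(\O)|=2$ the two elements are $\sgn$-dual and $\bigwedge^\bullet V$ is $\sgn$-self-dual, so it is enough to analyse a single $\Tsig$ at a time.

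The first step is a reduction. By Corollary \ref{enhancedwtype}, every irreducible $W$-constituent of $\Tsig\otimes\SF$ lies in a Lusztig family $F$ with $\O\ge\O_F\ge d(\O)$. Consequently every constituent of $(\Tsig\otimes\SF)^{\otimes 2}$ also appears in $M^{\otimes 2}$, where $M$ is the representation in the conclusion of the conjecture. Using Remark \ref{SFotimesSF},
\[
(\Tsig\otimes\SF)^{\otimes 2}=a_V\,(\Tsig\otimes\Tsig)\otimes\bigwedge\nolimits^\bullet(V),
\]
so it suffices to prove that $(\Tsig\otimes\Tsig)\otimes\bigwedge^\bullet(V)$ contains every irreducible character of $W$; this is the equivalent spin formulation highlighted in the introduction.

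The second step is case analysis by Cartan type. For the exceptional types the character tables of $\Tw$ are available and the Lusztig families are explicitly classified, so one performs a finite computation verifying the full decomposition of $(\Tsig\otimes\Tsig)\otimes\bigwedge^\bullet(V)$ directly; this should succeed for each of $G_2,F_4,E_6,E_7,E_8$. For classical types B, C, D I would parameterise $\Irr_{\gen}(\Tw)$ by strict (bi)partitions, decompose $\Tsig\otimes\Tsig$ using Morris/Stembridge-style spin character formulas, and pair the result with the well-known decomposition of $\bigwedge^\bullet V$ into $W$-types. Via Clifford theory this should translate the conjecture into a Kronecker-coefficient problem for square or near-square bipartitions of symmetric groups, consistent with the announced Corollary \ref{tensorproddecompose}.

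The main obstacle is the type A case: there $\Psi^{-1}(\O)$ is a singleton, each Lusztig family is a singleton, and the conjecture specialises to the original Saxl conjecture, so the best one can do uniformly is partial results such as the hooks of Proposition \ref{saxlA-1}. Even in types B, C, D the reduction to Kronecker coefficients of near-square bipartitions is itself a notoriously hard problem, so the realistic outcome of this strategy is to produce explicit large families of $W$-types (hooks, two-row partitions, staircase bipartitions, \ldots) case by case, and to complete only the exceptional and non-crystallographic situations in full.
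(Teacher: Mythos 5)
The statement you are proving is a \emph{conjecture} that the paper does not (and cannot at present) establish in full, since in type $A$ it specialises to Saxl's original conjecture; your proposal correctly recognises this and outlines essentially the paper's own strategy for the partial results it does obtain, namely reducing via Corollary \ref{enhancedwtype} and Remark \ref{SFotimesSF} to the spin formulation $(\Tsig\otimes\Tsig)\otimes\bigwedge^\bullet(V)$, verifying the exceptional (self-dual) cases by explicit computation, and treating the classical types combinatorially. There is no gap in your reasoning beyond the openness of the conjecture itself, which you acknowledge.
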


Recall that each exterior power of the reflection representation is an irreducible character of the Weyl group. One can thus easily generalise the argument in Proposition \ref{saxlA-1} to obtain the following:

\begin{proposition}
    Let $\mathfrak{g}$ be a simple Lie algebra with Weyl group $W$. Let $\O$ be the unique minimal solvable orbit of $\mathfrak{g}$.
    Then $(\bigoplus_{\sigma\in F:\O \ge \O_F \ge d(\O)} \sigma)^{\otimes 2} $ contains all irreducible characters of $W$ of the form $\bigwedge^m(V)$, where $V$ represents the reflection representation of $W$.
\end{proposition}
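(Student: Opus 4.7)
The plan is to imitate the proof of Proposition \ref{saxlA-1} by applying it to any $\Tsig \in \Psi^{-1}(\O)$, where $\O$ is the unique minimal solvable orbit of $\g$. First, I would fix such a $\Tsig$ and consider the $W$-representation $\Tsig \otimes \SF$. By Corollary \ref{enhancedwtype}, every irreducible constituent of $\Tsig \otimes \SF$ is a $W$-type lying in some Lusztig family $F$ with $\O \ge \O_F \ge d(\O)$. Writing
\[
\Sigma := \bigoplus_{\sigma \in F:\, \O \ge \O_F \ge d(\O)} \sigma,
\]
this means every such constituent appears as a summand of $\Sigma$.

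Next I would expand, using Remark \ref{SFotimesSF},
\[
(\Tsig \otimes \SF)^{\otimes 2} \;=\; (\Tsig \otimes \Tsig) \otimes (\SF \otimes \SF) \;=\; a_V \cdot (\Tsig \otimes \Tsig) \otimes \bigwedge\nolimits^{\bullet}(V).
\]
To ensure the trivial $W$-type appears in $\Tsig \otimes \Tsig$, it suffices to show $\Tsig \cong \Tsig^*$. This will follow exactly as in Proposition \ref{saxlA-1}: every irreducible representation of a Weyl group has real-valued character, so $\Tsig \otimes \SF$ has real character; since $\SF$ has real character too, so does $\Tsig$, whence $\Tsig \cong \Tsig^*$. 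Then $\Tsig \otimes \Tsig \cong \Hom(\Tsig,\Tsig)$ contains the trivial representation, and consequently $(\Tsig \otimes \SF)^{\otimes 2}$ contains every exterior power $\bigwedge^m(V)$.

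To finish, I would observe that any $\bigwedge^m(V)$ occurring in $(\Tsig \otimes \SF)^{\otimes 2}$ arises as a constituent of some $\sigma_1 \otimes \sigma_2$ with $\sigma_1, \sigma_2$ constituents of $\Tsig \otimes \SF$; by the first paragraph, both $\sigma_i$ appear in $\Sigma$, so $\sigma_1 \otimes \sigma_2$ is a summand of $\Sigma^{\otimes 2}$, and hence $\bigwedge^m(V)$ is too.

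The only point requiring some care is the self-duality of $\Tsig$: deducing that $\chi_{\Tsig}$ is real from $\chi_{\Tsig}\cdot\chi_{\SF}$ and $\chi_{\SF}$ being real uses implicitly that $\chi_{\SF}$ does not vanish on any genuine conjugacy class of $\Tw$. This is the same subtlety that is implicit in Proposition \ref{saxlA-1}; as a fallback one may replace $\Tsig$ throughout by $\Tsig \oplus \Tsig^*$, provided one verifies that the $W$-constituents of $\Tsig^* \otimes \SF$ still lie in the allowed family range, which reduces to the compatibility of the Spaltenstein involution with $\sgn$-duality of Lusztig families recorded in Section~3.
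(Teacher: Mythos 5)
Your argument is correct and is essentially the paper's own proof, which simply picks any $\Tsig\in\Psi^{-1}(\O)$, notes that all characters of $W$ are real so $\Tsig$ is self-dual, and imitates Proposition \ref{saxlA-1}. The self-duality subtlety you flag is equally present in the paper's argument, and your fallback via $\Tsig\oplus\Tsig^*$ costs nothing: since $\SF=\SF^*$ and every $W$-representation is self-dual, $\Tsig^*\otimes\SF\cong(\Tsig\otimes\SF)^*\cong\Tsig\otimes\SF$ has exactly the same $W$-constituents, so they still lie in the allowed range of families.
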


\begin{proof}
Pick any $\widetilde{\sigma}\in \Irrg(\Tw)$ corresponding to orbit $\O$, and thus the constituents of $\widetilde{\sigma}\otimes \SF$ all lie in the summand $\bigoplus_{\sigma\in F:\O \ge \O_F \ge d(\O)} \sigma$. Observe that all characters of $W$ are integral (in particular real), and thus $\widetilde{\sigma}$ has real character as before. We conclude the proof by imitating the arguments in the proof of Proposition \ref{saxlA-1}.
\end{proof}

\begin{example}
    In general, the summand $\bigoplus_{\sigma\in F:\O \ge \O_F \ge d(\O)} \sigma$ may be quite large. For example, consider $W=S_9$. The minimal solvable orbit is given by $\lambda=(432)$. There are 5 partitions between $\lambda=(432)$ and $\lambda^t=(3321)$, namely $(432),(4311),(4221),(333),(3321)$. One can easily verify that $((432)+(3321))^{\otimes 2}$ already contains all irreducible characters of $S_9$, and therefore the generalised Saxl conjecture is quite weak in this case.
\end{example}

However, there is a particularly interesting case when the minimal solvable orbit is \textbf{self-dual}: $\O=d(\O)$. In this case, we just sum over all characters in family $F$ corresponding to $\O$. Remarkably, we have a large supply of minimal solvable orbits that are self-dual: 

\begin{enumerate}
    \item $n=k(k+1)/2$ for type A, $\O=(k,k-1,\cdots,1)$.
    \item $n=k(k+1)$ for type C, $\O=(2k,2k,\cdots,4,4,2,2)$.
    \item $n=k^2$ for type D, $\O=(2k-1,2k-1,\cdots,3,3,1,1)$.
    \item Every exceptional type: $G_2(a_1)$ for $G_2$; $F_4(a_3)$ for $F_4$; $D_4(a_1)$ for $E_6$; $A_4+A_1$ for $E_7$; $E_8(a_7)$ for $E_8$.
\end{enumerate}

We thus highlight the following special case of the generalised Saxl conjecture, which is particularly interesting as the direct sum is \say{small}:

\begin{conjecture}[Generalised Saxl Conjecture for Weyl Groups, Ver. II]
\label{gen_Saxl_conj_ver_II}
Let $\mathfrak{g}$ be a simple Lie algebra with Weyl group $W$. Let $\O_F$ be the unique minimal solvable orbit of $\mathfrak{g}$, where $\O_{F}$ is attached to Lusztig family $F$. Suppose also that $\O_F=d(\O_F)$.
    Then $(\bigoplus_{\sigma\in F} \sigma)^{\otimes 2} $ contains all irreducible characters of $W$.
\end{conjecture}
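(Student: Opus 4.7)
The plan is to pivot to the spin side at the outset. Pick any $\Tsig \in \Psi^{-1}(\O)$; such a $\Tsig$ exists since $\Psi$ is surjective. By Corollary \ref{enhancedwtype}, every irreducible $W$-constituent of $\Tsig \otimes \SF$ lies in some Lusztig family $F'$ satisfying $\O \geq \O_{F'} \geq d(\O)$. The self-duality hypothesis $\O = d(\O)$ collapses this double inequality to $\O_{F'} = \O = \O_F$, forcing $F' = F$. Hence $\Tsig \otimes \SF = \bigoplus_{\sigma \in F} m_\sigma\, \sigma$ with multiplicities $m_\sigma \geq 0$, and in particular the set of irreducible $W$-constituents of $(\Tsig \otimes \SF)^{\otimes 2}$ is contained in that of $\bigl(\bigoplus_{\sigma \in F} \sigma\bigr)^{\otimes 2}$. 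It therefore suffices to show that $(\Tsig \otimes \SF)^{\otimes 2}$ contains every irreducible $W$-character.

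By Remark \ref{SFotimesSF},
\[
(\Tsig \otimes \SF)^{\otimes 2} \;=\; \Tsig^{\otimes 2} \otimes (\SF \otimes \SF) \;=\; a_V\, \Tsig^{\otimes 2} \otimes \bigwedge\nolimits^\bullet V ,
\]
so the task reduces to the equivalent spin formulation: every irreducible character of $W$ appears in $\Tsig^{\otimes 2} \otimes \bigwedge^\bullet V$. Because $\SF$ is $\sgn$-self-dual (hence real) and every $W$-character is integral, $\Tsig \otimes \SF$ is real; hence $\Tsig$ itself is self-dual by the same character-theoretic argument used in Proposition \ref{saxlA-1}, so a trivial summand sits inside $\Tsig^{\otimes 2}$. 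Consequently $\bigwedge^\bullet V$ embeds as a sub-representation of $\Tsig^{\otimes 2} \otimes \bigwedge^\bullet V$ for free, which recovers the exterior-power (hook-shaped) $W$-types automatically and already gives partial progress even for type $A$ (the original Saxl conjecture).

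The remaining irreducibles must be captured type by type. For the exceptional Lie algebras I would identify the representative $\Tsig$ using the Casimir scalar $\Tsig(\Omega_{\Tw}) = \langle h_e, h_e \rangle$ from Theorem \ref{generalisedSpringer}(1), and then decompose $\Tsig^{\otimes 2} \otimes \bigwedge^\bullet V$ directly from the character tables of $\Tw$; the most delicate case is $E_7$, whose relevant family is $\{\phi_{512,11}, \phi_{512,12}\}$ and whose assertion closely mimics the statement of the original Saxl conjecture. For the non-crystallographic Coxeter groups the character tables are small enough that direct verification of the analogous spin statement succeeds. For classical types $B$, $C$, $D$, the self-duality of $\O$ forces the bipartition (or strict partition) attached to $\Tsig$ to be near-rectangular, and I would translate the problem, via Corollary \ref{tensorproddecompose} and the Morris--Stembridge combinatorics of shifted tableaux and spin characters, into a tensor-product decomposition problem for ordinary symmetric-group characters indexed by (nearly) square partitions.

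The principal obstacle is exactly this last translation. The resulting $S_n$-question — whether the tensor square of the character of a square or near-square partition contains almost every irreducible of $S_n$ — empirically cannot hold in full strength (it fails for some irreducibles) and what survives appears to be of the same order of difficulty as the original Saxl conjecture itself. My expectation, consistent with the authors' own strategy, is therefore that a uniform proof is feasible in the exceptional and non-crystallographic cases via direct character computation, while the classical types $B$, $C$, $D$ will admit only case-by-case partial results pending progress on the underlying combinatorial tensor-product problem.
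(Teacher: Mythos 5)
First, be clear about the status of the statement: it is a conjecture, and the paper does not prove it in full either. What the paper actually establishes is (i) a case-by-case computational verification for the exceptional types, (ii) partial results for types $C$ and $D$ (only the constituents of the $\mathbb{S}_{[(2^r,1^s)]}V$ are captured), and (iii) the observation that for type $A$ the statement is the original Saxl conjecture, which remains open. Your overall strategy --- pass to the spin side via Corollary \ref{enhancedwtype}, use self-duality of $\O$ to confine the constituents of $\Tsig\otimes\SF$ to the single family $F$, extract $\bigwedge^{\bullet}V$ from Remark \ref{SFotimesSF} together with the self-duality of $\Tsig$, verify the exceptional types by direct computation, and reduce the classical types to symmetric-group tensor-product problems --- is essentially the paper's, and your closing assessment of what is and is not currently provable is accurate.

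There is, however, one concrete flaw in your opening reduction. You replace the conjecture by the stronger claim that $(\Tsig\otimes\SF)^{\otimes 2}$ contains every irreducible character for a \emph{single} $\Tsig\in\Psi^{-1}(\O)$. This is harmless for the classical types, where $\Psi^{-1}(\O)$ is a single $\sgn$-orbit and hence the constituents of $\Tsig\otimes\SF$ already exhaust $F$ (Theorem \ref{getallfamily}). But for the exceptional types $\Psi^{-1}(\O_F)$ can be large (four elements for $F_4$, six for $E_8$), and a single $\Tsig\otimes\SF$ provably cannot contain all of $F$: for $F_4$ the largest such product has dimension $12\cdot 4=48$, while the eleven characters of $F$ have total dimension $72$. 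Your ``sufficient'' statement therefore asks for the square of a \emph{proper} subsum of $F$ to contain all of $\Irr(W)$, which is strictly stronger than the conjecture and is precisely the kind of strengthening the paper shows can fail (Remark \ref{fail_over_min_solv_orb} and the $F_4$ example, where the square of a four-character subsum of $F$ misses nine of the twenty-five characters). The honest spin-side equivalence uses the full sum $\bigoplus_{\Tsig\in\Psi^{-1}(\O)}\Tsig$ and rests on Theorem \ref{getallfamily}; for the exceptional cases one should simply compute $(\bigoplus_{\sigma\in F}\sigma)^{\otimes 2}$ directly, as the paper does.
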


\begin{remark}
\label{cuspidal}
The notion of (1-stable) cuspidal family was introduced by Lusztig in Chapter 8 of \cite{lusztig1984characters} via $\Tilde{j}$-induction from parabolic subgroups. It can be shown that there is at most one cuspidal family for each Weyl group. The following remarkable fact can be seen by a case-by-case analysis. Except for type $A$, a cuspidal family $\widetilde{F}$ exists if and only if there is a self-dual solvable orbit $\O$. Suppose that $\O = \O_{F}$ for Lusztig family $F$. Then in fact $\widetilde{F} = F$, i.e. the cuspidal family is equal to the Lusztig family attached to the unique solvable self-dual orbit. 
\end{remark}

We first prove Conjecture \ref{gen_Saxl_conj_ver_II} for the exceptional types by direct calculation.

\begin{proposition}
Conjecture \ref{gen_Saxl_conj_ver_II} holds for the Weyl groups of exceptional types.
\end{proposition}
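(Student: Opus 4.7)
The plan is to verify Conjecture \ref{gen_Saxl_conj_ver_II} by direct computation for each of the five exceptional types. For every type the minimal self-dual solvable orbit $\O_F$ is explicitly listed above, namely $G_2(a_1)$, $F_4(a_3)$, $D_4(a_1)$, $A_4+A_1$, $E_8(a_7)$ for $G_2, F_4, E_6, E_7, E_8$ respectively. The Lusztig family $F$ attached to each of these orbits can be read off Lusztig's classification in Chapter 4 of \cite{lusztig1984characters} or from the tables in \cite{carter1993finite}; in each case $F$ is an explicit finite list of irreducible characters of $W$, whose sizes vary considerably across types (only two characters for $E_7$, and more for the others).

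The strategy is then, for each type, to compute the character $\chi_F = \sum_{\sigma \in F} \chi_\sigma$ on every conjugacy class of $W$ and evaluate the multiplicities
\[
\langle \chi_F^{\,2}, \chi_\tau \rangle_W = \frac{1}{|W|}\sum_{w \in W} \chi_F(w)^2\,\overline{\chi_\tau(w)}
\]
for every $\tau \in \Irr(W)$. The conjecture holds precisely when all such multiplicities are strictly positive. For $G_2, F_4, E_6$ this can be done by hand from printed character tables; for $E_7$ and $E_8$ it is most conveniently carried out in a computer algebra system such as CHEVIE or GAP, where character tables, conjugacy class data, and Lusztig family decompositions are all built in. One could alternatively phrase the calculation via spin representations by fixing $\Tsig \in \Psi^{-1}(\O_F)$ and using that $\Tsig \otimes \SF$ is supported in $F$, so that Corollary \ref{enhancedwtype} translates the question into decomposing $\Tsig^{\otimes 2} \otimes \bigwedge^{\bullet} V$; but since the Weyl groups are finite and well-tabulated, the direct character-theoretic computation is cleanest.

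The main obstacle, as flagged in the introduction, will be $E_7$: the family consists of only the two $512$-dimensional characters $\phi_{512,11}$ and $\phi_{512,12}$, so their tensor square has to cover all $60$ irreducibles of $W(E_7)$ with essentially no room to spare, mimicking the original Saxl conjecture for $S_n$. This is the tightest and least forgiving of the five cases, and is the one which makes the proposition genuinely non-trivial; the four remaining types are expected to fall out as routine verifications once their families have been enumerated and the multiplicities tabulated.
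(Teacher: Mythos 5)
Your proposal is correct and matches the paper's proof: the paper likewise identifies the self-dual minimal solvable orbits ($G_2(a_1)$, $F_4(a_3)$, $D_4(a_1)$, $A_4+A_1$, $E_8(a_7)$), reads off the corresponding Lusztig families, and verifies the decomposition of $(\bigoplus_{\sigma\in F}\sigma)^{\otimes 2}$ case-by-case with GAP~3 and CHEVIE, displaying the multiplicity tables (with $E_7$ as the featured tight case). The only difference is that the paper actually records the resulting multiplicities in tables, which your plan would need to do to complete the verification.
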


\begin{proof}
By Proposition 5.5 of \cite{CIUBOTARU20221}, the minimal solvable orbits for type $G_2,F_4,E_6,E_7,E_8$ are those corresponding to unipotent classes $G_2(a_1),F_4(a_3),D_4(a_1),A_4+A_1,E_8(a_7)$ respectively, and they're all self-dual. The statement can be verified case-by-case using GAP 3 \cite{GAP3} and package CHEVIE \cite{CHEVIE}. We demonstrate this using the particularly nice example $E_7$, for which the family $F$ of the minimal solvable orbit only has two characters, $F = \{ \phi_{512,11},\phi_{512,12} \}$.\\ 

Let $\phi\in\Irr(W(E_7))$. Suppose $\phi$ occurs with multiplicity $m$ in $(\phi_{512,11}+\phi_{512,12})^{\otimes2}$, then we list the data as: $\phi \ (m)$. The decomposition is as follows:

\renewcommand{\arraystretch}{1.2}
\begin{center}
\begin{tabular}{ l l l l l l }
$\phi_{1,0}\ (2)  $
& $\phi_{1,63}\ (2)  $
& $\phi_{7,46}\ (4)  $
& $\phi_{7,1}\ (4)  $
& $\phi_{15,28}\ (6)  $
& $\phi_{15,7}\ (6)  $
\\
$\phi_{21,6}\ (10)  $
& $\phi_{21,33}\ (10)  $
& $\phi_{21,36}\ (10)  $
& $\phi_{21,3}\ (10)  $
& $\phi_{27,2}\ (12)  $
& $\phi_{27,37}\ (12)  $
\\
$\phi_{35,22}\ (14)  $
& $\phi_{35,13}\ (14)  $
& $\phi_{35,4}\ (14)  $
& $\phi_{35,31}\ (14)  $
& $\phi_{56,30}\ (24)  $
& $\phi_{56,3}\ (24)  $
\\
$\phi_{70,18}\ (26)  $
& $\phi_{70,9}\ (26)  $
& $\phi_{84,12}\ (30)  $
& $\phi_{84,15}\ (30)  $
& $\phi_{105,26}\ (40)  $
& $\phi_{105,5}\ (40)  $
\\
$\phi_{105,6}\ (40)  $
& $\phi_{105,21}\ (40)  $
& $\phi_{105,12}\ (40)  $
& $\phi_{105,15}\ (40)  $
& $\phi_{120,4}\ (46)  $
& $\phi_{120,25}\ (46)  $
\\
$\phi_{168,6}\ (62)  $
& $\phi_{168,21}\ (62)  $
& $\phi_{189,10}\ (70)  $
& $\phi_{189,17}\ (70)  $
& $\phi_{189,22}\ (70)  $
& $\phi_{189,5}\ (70)  $
\\
$\phi_{189,20}\ (70)  $
& $\phi_{189,7}\ (70)  $
& $\phi_{210,6}\ (80)  $
& $\phi_{210,21}\ (80)  $
& $\phi_{210,10}\ (72)  $
& $\phi_{210,13}\ (72)  $
\\
$\phi_{216,16}\ (76)  $
& $\phi_{216,9}\ (76)  $
& $\phi_{280,18}\ (106)  $
& $\phi_{280,9}\ (106)  $
& $\phi_{280,8}\ (98)  $
& $\phi_{280,17}\ (98)  $
\\
$\phi_{315,16}\ (112)  $
& $\phi_{315,7}\ (112)  $
& $\phi_{336,14}\ (122)  $
& $\phi_{336,11}\ (122)  $
& $\phi_{378,14}\ (134)  $
& $\phi_{378,9}\ (134)  $
\\
$\phi_{405,8}\ (146)  $
& $\phi_{405,15}\ (146)  $
& $\phi_{420,10}\ (152)  $
& $\phi_{420,13}\ (152)  $
& $\phi_{512,12}\ (182) $
& $\phi_{512,11}\ (182)  $
\end{tabular}
\end{center}
The tables bearing the proof for the other exceptional types can be found in the appendix.
\end{proof}

\begin{remark}
\label{fail_over_min_solv_orb}
One might wonder whether it suffices to sum just over the characters attached to the minimal solvable orbit (via the Springer correspondence), instead of everything within the corresponding Lusztig family, before tensoring with itself to get all irreducible characters. However, this fails for all exceptional types except $E_7$ (in which case, every character in the Lusztig family corresponds to the unipotent class $A_4+A_1$.)
\end{remark}

\begin{example} We simultaneously show the verification of Conjecture \ref{gen_Saxl_conj_ver_II} and the failure of Remark \ref{fail_over_min_solv_orb} for $\mathfrak{g} = F_{4}$. The self-dual minimal solvable orbit is $F_{4}(a_{3})$. We use Carter's notation \cite{carter1993finite} for the characters. The corresponding family $F$ (which has 11 out of 25 characters) and $F_{4}(a_{3})$ are: $$F = \{
\phi_{12,4},\
\phi_{16,5}, \
\phi_{6,6}', \
\phi_{6,6}'', \
\phi_{9,6}', \
\phi_{9,6}'', \
\phi_{4,7}', \
\phi_{4,7}'', \ 
\phi_{4,8},  \
\phi_{1,12}', \ 
\phi_{1,12}''
\}$$
$$F_{4}(a_{3}) = \{
\phi_{12,4},\
\phi_{6,6}'', \
\phi_{9,6}', \
\phi_{1,12}'
\} $$

Let $\phi \in \Irr(W(F_{4}))$. Suppose $\phi$ occurs with:

\begin{enumerate}
    \item multiplicity $m_{1}$ in $(\bigoplus_{\sigma\in F} \sigma)^{\otimes 2}$ as in Conjecture \ref{gen_Saxl_conj_ver_II}
    
    \item multiplicity $m_{2}$ in $(\bigoplus_{\sigma\in F_{4}(a_{3})} \sigma)^{\otimes 2}$ as in Remark \ref{fail_over_min_solv_orb}
    
\end{enumerate}

Then we list the data as: $\phi \; (m_{1})\; (m_{2})$. The decomposition of each is as follows:

\renewcommand{\arraystretch}{1.2}
\begin{center}
\begin{tabular}{ l l l l l }
$\phi_{1,0}\ (11)\ (4)$
& $\phi_{1,12}''\ (5)\ (1)  $
& $\phi_{1,12}'\ (5)\ (1)  $
& $\phi_{1,24}\ (11)\ (2)  $
& $\phi_{2,4}''\ (9)\ (4)  $
\\
$\phi_{2,16}'\ (9)\ (3)  $
& $\phi_{2,4}'\ (9)\ (4)  $
& $\phi_{2,16}''\ (9)\ (3)  $
& $\phi_{4,8}\ (25)\ (7)  $
& $\phi_{9,2}\ (46)\ (15)  $
\\
$\phi_{9,6}''\ (40)\ (11)  $
& $\phi_{9,6}'\ (40)\ (11)  $
& $\phi_{9,10}\ (46)\ (11)  $
& $\phi_{6,6}'\ (35)\ (10)  $
& $\phi_{6,6}''\ (39)\ (6)  $
\\
$\phi_{12,4}\ (57)\ (16)  $
& $\phi_{4,1}\ (24)\ (0)  $
& $\phi_{4,7}''\ (16)\ (0)  $
& $\phi_{4,7}'\ (16)\ (0)  $
& $\phi_{4,13}\ (24)\ (0)  $
\\
$\phi_{8,3}''\ (28)\ (0)  $
& $\phi_{8,9}'\ (28)\ (0)  $
& $\phi_{8,3}'\ (28)\ (0)  $
& $\phi_{8,9}''\ (28)\ (0)  $
& $\phi_{16,5}\ (68)\ (0)  $

\end{tabular}
\end{center}
 
\end{example}

\subsection{Generalised Saxl Conjecture for Type C}

In this section, we look into Saxl's conjecture for type C. Let $W=W(C_n)=(\Z/2\Z)^n \rtimes S_n$ be the Weyl group for $\mathfrak{sp}(2n)$. Recall that the Springer correspondence in this setup can be phrased by the following combinatorial data (e.g. Chapter 13 of \cite{carter1993finite}):

\[\begin{tikzcd}
	{\text{Irr}(W)} &&& {\mathcal{N}} \\
	\\
	{\text{BP}(n)} &&& {P_{-1}(2n)}
	\arrow[two heads, from=1-1, to=1-4]
	\arrow["\sim"', from=1-1, to=3-1]
	\arrow["\sim", from=1-4, to=3-4]
	\arrow[two heads, from=3-1, to=3-4]
	\arrow["\psi"', shift left, bend right, from=3-4, to=3-1],
\end{tikzcd}\]
Let BP($n$) be the set of bi-partitions of $n$, and $P_{-1}(2n)$ be the partitions of $2n$ in which odd parts appear an even number of times. The section map $\psi$ gives us a recipe for constructing a bi-partition from an element in $P_{-1}(2n)$, such that the image of $\psi$ precisely corresponds to the characters of $W$ of the form $\sigma_{e,\triv}$ for $\triv \in \Irr_{0}(A(e))$ the trivial character in the component group of $Z_G(e)$. 

A nilpotent orbit is solvable if and only if the corresponding partition consists of only even parts, and all parts have multiplicity at most two. The Spaltenstein map $d:P_{-1}(2n) \rightarrow P_{-1}(2n)$ is given by $\nu \mapsto \nu^t_C$, where the subscript $C$ represents the $C$-collapse. It is clear from the definition that an orbit $\nu\in P_{-1}(2n)$ is special if and only if $\nu^t\in P_{-1}(2n)$.

We now state the following explicit description of the genuine representations of $\widetilde{W}$:

\begin{theorem} [Theorem 5.1 of \cite{ReadProjectiveReps}]
    \label{Tw_parameterisation}
    Let $\Tw$ be the Pin cover of the Weyl group of type $C_n$. There is a one-to-one correspondence
    \[
    \Irrg(\Tw) / \sim  \quad \leftrightarrow \quad P(n)
    \]
    where $\sim$ is the equivalence relation generated by $\widetilde{\sigma}\sim \widetilde{\sigma}\otimes \sgn$.

For each $\lambda \in P(n)$, there is exactly:
\begin{enumerate}
    \item one irreducible $\Tw$-type $\sigma_{(\lambda,\emptyset)} \otimes S$, if $n$ is even;
    \item two associate irreducible $\Tw$-types $\sigma_{(\lambda,\emptyset)} \otimes S^{\pm}$, if $n$ is odd.
\end{enumerate}
\end{theorem}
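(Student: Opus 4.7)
The plan is to realise $\Tw$ as a subgroup of $\Pin(V)$ and then apply Clifford theory with respect to the normal $2$-subgroup $\tilde{N}:=p^{-1}(N)$, where $N=(\Z/2\Z)^n$ is the kernel of the natural projection $W(C_n) \twoheadrightarrow S_n$. First I would identify $\tilde{N}$ explicitly as the subgroup of $\Tw\subset\Pin(V)$ generated by $z$ and the lifts $f_{2e_i}=e_i/|e_i|$ of the short-root reflections; the Clifford relations $e_i^2=-1$ and $e_ie_j=-e_je_i$ force $\tilde{N}$ to be an extra-special $2$-group of order $2^{n+1}$, with centre $\langle z\rangle$ when $n$ is even and $\langle z,c\rangle$ when $n$ is odd, where $c=f_{2e_1}\cdots f_{2e_n}$ is the chirality element.

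Next I would invoke the standard representation theory of extra-special $2$-groups (equivalently, of the Clifford algebra $C(V)$) to see that $\tilde{N}$ admits a unique genuine irreducible of dimension $2^{n/2}$ when $n$ is even, and exactly two inequivalent ones of dimension $2^{(n-1)/2}$ when $n$ is odd, distinguished by the value of $\chi(c)$. These are precisely the restrictions $S|_{\tilde{N}}$ and $S^{\pm}|_{\tilde{N}}$. A short but essential computation in $C(V)$ then shows that for any lift $\tilde{s}_{ij}\in\Tw$ of a transposition, conjugation sends each $e_k\mapsto z\cdot e_{s_{ij}(k)}$; combining these factors with the reordering required to recover $c$ yields $\tilde{s}_{ij}\,c\,\tilde{s}_{ij}^{-1}=z^{n+1}c$, which is $c$ when $n$ is odd. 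Consequently $S_n=\Tw/\tilde{N}$ fixes every central character, so every genuine $\tilde{N}$-irrep is $\Tw$-stable in both parities.

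Clifford theory now furnishes a bijection between $\Irrg(\Tw)$ and pairs consisting of an extension of a genuine $\tilde{N}$-irrep to $\Tw$ and an element of $\Irr(S_n)$. The canonical extensions are already supplied by the embedding $\Tw\hookrightarrow\Pin(V)$ acting on $S$ (respectively $S^{\pm}$), so tensoring with the pullback $\sigma_{(\lambda,\emptyset)}$ of $\sigma_\lambda\in\Irr(S_n)$ along $W\twoheadrightarrow S_n$ produces the claimed families $\sigma_{(\lambda,\emptyset)}\otimes S$ and $\sigma_{(\lambda,\emptyset)}\otimes S^{\pm}$. A dimension count $\sum_{\lambda\in P(n)}(\dim\sigma_\lambda)^2\cdot 2^n=2^n\,n!$ confirms these exhaust the total dimension-squared of $\Irrg(\Tw)$. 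Finally, in the odd case the identity $(\sigma_{(\lambda,\emptyset)}\otimes S^+)\otimes\sgn=\sigma_{(\lambda,\emptyset)}\otimes S^-$ shows the two associates collapse into a single $\sim$-class, recovering the advertised bijection with $P(n)$. The main obstacle is precisely the chirality calculation: without careful tracking of Clifford signs one would erroneously conclude that odd permutations swap $\chi_+$ and $\chi_-$, forcing the inertia subgroup down to $A_n$ and destroying the clean classification.
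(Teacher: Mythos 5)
The paper does not prove this statement: it is quoted verbatim as Theorem 5.1 of Read's paper on projective representations, so there is no internal proof to compare against. Your Clifford-theoretic argument is, however, a correct and essentially standard derivation of the cited result, and all the key computations check out: with the paper's convention $v\otimes v'+v'\otimes v+2B(v,v')=0$ one indeed has $f_{2e_i}^2=z$ and $f_{2e_i}f_{2e_j}=zf_{2e_j}f_{2e_i}$, so $\widetilde{N}$ is the finite Clifford group of order $2^{n+1}$; its genuine irreducibles are the restrictions of the spinor module(s); the conjugation formula $\widetilde{s}_{ij}e_k\widetilde{s}_{ij}^{-1}=ze_{s_{ij}(k)}$ and the resulting $\widetilde{s}_{ij}c\widetilde{s}_{ij}^{-1}=z^{n+1}c$ are right; and Gallagher's theorem plus the count $\sum_\lambda(\dim\sigma_\lambda)^2\cdot 2^n=2^n n!=|\Tw|-|W|$ gives exhaustion. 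Three small remarks. First, in type $C_n$ the roots $\pm 2e_i$ are the \emph{long} roots (they are short only in the $B_n$ normalisation); this is purely terminological. Second, for $n$ odd the stability of the two genuine $\widetilde{N}$-types is immediate without any sign-tracking, because the chirality element $c=e_1\cdots e_n$ is central in the entire Clifford algebra $C(V)$ when $\dim V$ is odd, hence central in $\Pin(V)\supseteq\Tw$; your computation recovers this, but the shortcut is worth noting. Third, you only address the $\sim$-collapse in the odd case; for completeness you should also record that for $n$ even each class is a singleton because $S\otimes\sgn\cong S$ (as stated in the preliminaries for $\dim V$ even), so $(\sigma_{(\lambda,\emptyset)}\otimes S)\otimes\sgn\cong\sigma_{(\lambda,\emptyset)}\otimes S$, which is what makes the correspondence with $P(n)$ a genuine bijection in both parities.
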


Let $\mu$ be the unique minimal solvable orbit constructed in Proposition 5.5 of \cite{CIUBOTARU20221}, and let $\lambda$ be a partition of $n$ corresponding to the genuine representation associated with $\mu$ in the above setting. For a special orbit $\nu \in P_{-1}(2n)$, denote $F_\nu$ as its corresponding Lusztig family. Combined with the the characterisation of closure ordering in Theorem 6.2.5 of \cite{collingwood1993nilpotent}, the above discussion leads us to the following nice consequence:

\begin{corollary}
\label{exterior_power_tensor_appears_in_type_C}
    Let $W$ be the Weyl group of $\mathfrak{sp}(2n)$, and $\mu$ be the minimal solvable orbit. Then all constituents of $\bigwedge^i(V) \otimes \bigwedge^j(V)$ also appear in $$\Big(\bigoplus_{\sigma\in F_\nu: \ \mu \ge \nu \ge \mu^t_C} \sigma \ \Big)^{\otimes 2} $$ where $V$ represents the reflection representation of $W$, and $0\leq j\leq i\leq n$.
\end{corollary}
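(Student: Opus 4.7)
The plan is to mimic the strategy of Proposition \ref{saxlA-1}, but to track the spinor-tensor decomposition of $\widetilde{\sigma}_\mu$ coming from Theorem \ref{Tw_parameterisation} so that the squared tensor produces a full copy of $\bigwedge^{\bullet}(V)\otimes\bigwedge^{\bullet}(V)$, rather than just $\bigwedge^{\bullet}(V)$ as in the earlier generalisation.

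First I would pick any $\widetilde{\sigma}_\mu \in \Psi^{-1}(\mu)$. By Corollary \ref{enhancedwtype}, every irreducible $W$-constituent of $\widetilde{\sigma}_\mu \otimes \SF$ lies in some Lusztig family $F_\nu$ with $\mu \ge \nu \ge \mu^t_C$, so it appears as a summand of $X := \bigoplus_{\sigma \in F_\nu:\,\mu \ge \nu \ge \mu^t_C} \sigma$. Consequently every irreducible constituent of $(\widetilde{\sigma}_\mu \otimes \SF)^{\otimes 2}$ appears in $X^{\otimes 2}$, and the task reduces to showing that every constituent of $\bigwedge^{i}(V) \otimes \bigwedge^{j}(V)$ appears in $(\widetilde{\sigma}_\mu \otimes \SF)^{\otimes 2}$.

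By Theorem \ref{Tw_parameterisation}, I may write $\widetilde{\sigma}_\mu = \sigma_{(\lambda,\emptyset)} \otimes S_0$ for the partition $\lambda \in P(n)$ associated to $\mu$, with $S_0 = S$ if $n$ is even and $S_0 \in \{S^+, S^-\}$ if $n$ is odd. The central identity I would establish is
\[
S_0 \otimes \SF \;\cong\; \bigwedge\nolimits^{\bullet}(V) \qquad \text{as $W$-representations.}
\]
For $n$ even this is immediate from $S_0 = \SF = S$ together with $S \otimes S = \bigwedge^{\bullet}(V)$. For $n$ odd, say with $S_0 = S^{+}$, one expands $S_0 \otimes \SF = S^{+}\otimes S^{+} + S^{+}\otimes S^{-}$ and uses the $W$-isomorphisms $S^{+}\otimes S^{+} \cong S^{-}\otimes S^{-}$ and $S^{+}\otimes S^{-} \cong S^{-}\otimes S^{+}$ coming from $S^{-} = S^{+}\otimes \sgn$; comparing with $\SF \otimes \SF = 2\bigwedge^{\bullet}(V)$ from Remark \ref{SFotimesSF} then forces $S^{+}\otimes S^{+} + S^{+}\otimes S^{-} = \bigwedge^{\bullet}(V)$. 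Substituting back gives
\[
(\widetilde{\sigma}_\mu \otimes \SF)^{\otimes 2} \;=\; \sigma_{(\lambda,\emptyset)}^{\otimes 2} \otimes \bigwedge\nolimits^{\bullet}(V) \otimes \bigwedge\nolimits^{\bullet}(V).
\]

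Finally, since every irreducible character of a Weyl group is real-valued, $\sigma_{(\lambda,\emptyset)}$ is self-dual and $\sigma_{(\lambda,\emptyset)}^{\otimes 2}$ contains the trivial character. Thus $(\widetilde{\sigma}_\mu \otimes \SF)^{\otimes 2}$ contains $\bigwedge^{\bullet}(V) \otimes \bigwedge^{\bullet}(V) = \bigoplus_{a,b} \bigwedge^{a}(V) \otimes \bigwedge^{b}(V)$ as a direct summand. In particular $\bigwedge^{i}(V) \otimes \bigwedge^{j}(V)$ sits inside $(\widetilde{\sigma}_\mu \otimes \SF)^{\otimes 2}$ for every $0 \le j \le i \le n$, and combining with the first step finishes the argument.

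I expect the main obstacle to be the identity $S_0 \otimes \SF \cong \bigwedge^{\bullet}(V)$ in the odd case; it is not formally forced by Remark \ref{SFotimesSF} alone, but depends crucially on the $\sgn$-duality between $S^{+}$ and $S^{-}$ as $W$-representations. Everything else is a routine upgrade of the argument in Proposition \ref{saxlA-1}.
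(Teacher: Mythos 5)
Your proposal is correct and follows essentially the same route as the paper: control the constituents of the spinor tensor via Corollary \ref{enhancedwtype}, identify the square with $\sigma_{(\lambda,\emptyset)}^{\otimes 2}\otimes\bigwedge^{\bullet}(V)\otimes\bigwedge^{\bullet}(V)$, and extract the trivial character from $\sigma_{(\lambda,\emptyset)}^{\otimes 2}$ by self-duality. The only (harmless) variation is that you work with a single $\widetilde{\sigma}_\mu=\sigma_{(\lambda,\emptyset)}\otimes S_0$ and verify $S_0\otimes\SF\cong\bigwedge^{\bullet}(V)$ by hand in the odd case, whereas the paper tensors $\sigma_{(\lambda,\emptyset)}$ with all of $\SF$ at once and quotes Remark \ref{SFotimesSF} directly.
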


\begin{proof}
We know $\sigma_{(\lambda,\emptyset)} \otimes \SF$ corresponds to the solvable nilpotent orbit $\mu$. Using Remark \ref{SFotimesSF}, $\sigma_{(\lambda,\emptyset)} \otimes \SF \otimes \SF =a_V\sigma_{(\lambda,\emptyset)} \otimes \bigwedge^{\bullet}(V)$, so by Corollary \ref{enhancedwtype} all constituents of $a_V\sigma_{(\lambda,\emptyset)} \otimes \bigwedge^{\bullet}(V)$ also lie in $\bigoplus_{\sigma\in F_\nu: \mu \ge \nu \ge \mu^t_C} \sigma$. Therefore, all constituents of $a_V^2\sigma_{(\lambda,\emptyset)} \otimes \sigma_{(\lambda,\emptyset)} \otimes (\bigwedge^{\bullet}(V) \otimes \bigwedge^{\bullet}(V) )$ also lie in $(\bigoplus_{\sigma\in F_\nu: \mu \ge \nu \ge \mu^t_C} \sigma)^{\otimes 2}$. We conclude the proof by noting that $\sigma_{(\lambda,\emptyset)} \otimes \sigma_{(\lambda,\emptyset)}$ contains a copy of the trivial representation, similarly to above.
\end{proof}

We provide a more explicit exposition of this result. By Exercise 6.13 of \cite{fulton1991representation}, we have an explicit decomposition 
\[\bigwedge\nolimits^i(V) \otimes \bigwedge\nolimits^j(V)=\bigoplus_{\alpha\in P(i+j)}K_{\alpha,(i,j)}\mathbb{S}_{\alpha^{t}}V\] where $K_{\alpha,(i,j)}$ is the Kostka number, and $\mathbb{S}_{\alpha^{t}}V$ is the Weyl module. Recall that the Kostka number $K_{\alpha, \beta}$ is nonzero iff $\alpha\ge\beta$ in the dominance ordering. Therefore, $\bigwedge^i(V) \otimes \bigwedge^j(V)$ and $\bigoplus_{i+j\ge k\ge i} \mathbb{S}_{(2^{i+j-k},1^{2k-i-j})}V$ have the same constituents. Together with the fact that $\bigwedge^i(V)\otimes \sgn = \bigwedge^{n-i}(V)$, and collecting the constituents for all choices of $(i,j)$, we obtain the following set of irreducible $\GL(V)$-modules: $\{\mathbb{S}_{(2^r,1^s)}V, \ \mathbb{S}_{(2^r,1^s)}V\otimes \sgn : 0\leq2r+s\leq n\}$.

Fix $r,s$ with $0 \leq 2r+s\leq n$. We now decompose the irreducible $\GL(V)$-module $\mathbb{S}_{(2^r,1^s)}V$ into irreducible $\text{O}(V)$-modules of the form $\mathbb{S}_{[(2^{r'},1^{s'})]}V$ as follows. By applying Theorem 19.22 to (25.37) both of \cite{fulton1991representation}, we have the following formula. Given $\lambda=(\lambda_1\ge\cdots\ge\lambda_n\ge 0)$,

\[\text{Res}^{\text{GL}(V)}_{\text{O}(V)} \mathbb{S}_\lambda V=\bigoplus_{\Bar{\lambda}} N_{\lambda\Bar{\lambda}}\mathbb{S}_{[\Bar{\lambda}]}V\]

where we sum over all $\Bar{\lambda}=(\Bar{\lambda_1}\ge\cdots\ge\Bar{\lambda_n}\ge 0)$, where $N_{\lambda\Bar{\lambda}}=\sum_{\delta}N_{\delta\Bar{\lambda}}^\lambda$, and where $N_{\delta\Bar{\lambda}}^\lambda$ are Littlewood-Richardson coefficients. Specialising to $\lambda=(2^r,1^s)$, we see if $N_{\lambda\Bar{\lambda}}>0$ then $\Bar{\lambda}$ must be of the same form $(2^{r'},1^{s'})$ with $ 0\leq 2r'+s'\leq n$. Hence we obtain the following:

\begin{corollary}
 Let $W$ be the Weyl group of $\mathfrak{sp}(2n)$, and $\mu$ be the
 minimal solvable orbit.
Then $(\bigoplus_{\sigma\in F_\nu: \mu \ge \nu \ge \mu^t_C} \sigma)^{\otimes 2} $ contains all the irreducible characters of $W$ occurring in $\mathbb{S}_{[\lambda]} V$, where $\lambda=(2^r,1^s)$ for $ 0\leq 2r+s\leq n$ , and $V$ is the reflection representation of W.
\end{corollary}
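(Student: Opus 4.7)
The plan is to chain Corollary~\ref{exterior_power_tensor_appears_in_type_C} with the Kostka-number expansion and the $\GL(V)$-to-$\text{O}(V)$ branching rule already recalled above, so that the only genuine work is to exhibit, for each target shape $(2^r, 1^s)$ with $2r+s \le n$, a pair $(i,j)$ satisfying $0 \le j \le i \le n$ such that $\mathbb{S}_{[(2^r,1^s)]} V$ occurs as a constituent of $\bigwedge^i V \otimes \bigwedge^j V$. Once this is done, all $W$-constituents of $\mathbb{S}_{[(2^r,1^s)]} V$ are automatically placed in the target tensor square.

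For each $\lambda = (2^r, 1^s)$ I would set $i = r+s$ and $j = r$, so that $0 \le j \le i \le n$ and $i+j = 2r+s$. Since $(r+s, r)^{t} = (2^r, 1^s)$, the expansion $\bigwedge^i V \otimes \bigwedge^j V = \bigoplus_{\alpha} K_{\alpha,(i,j)} \mathbb{S}_{\alpha^{t}} V$ contains $\mathbb{S}_\lambda V$ with multiplicity $K_{(r+s, r), (r+s, r)} = 1$, which is the diagonal Kostka number counting the unique semistandard tableau of shape $(r+s,r)$ and content $(r+s,r)$ (ones fill the first row, twos fill the second). Next, applying the branching rule $\text{Res}^{\GL(V)}_{\text{O}(V)} \mathbb{S}_\lambda V = \bigoplus_{\bar\lambda} N_{\lambda\bar\lambda} \mathbb{S}_{[\bar\lambda]} V$ with $N_{\lambda\bar\lambda} = \sum_\delta N^\lambda_{\delta\bar\lambda}$, I would specialise to $\bar\lambda = \lambda$ and $\delta = \emptyset$; the Littlewood--Richardson coefficient $N^{\lambda}_{\emptyset, \lambda}$ equals $1$, so $\mathbb{S}_{[\lambda]} V$ appears as an $\text{O}(V)$-summand of $\mathbb{S}_\lambda V$ and hence also of $\bigwedge^i V \otimes \bigwedge^j V$.

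Because $W \subset \text{O}(V)$, every $W$-irreducible constituent of $\mathbb{S}_{[\lambda]} V$ is then a $W$-constituent of $\bigwedge^i V \otimes \bigwedge^j V$, and Corollary~\ref{exterior_power_tensor_appears_in_type_C} deposits all such characters into $\bigl(\bigoplus_{\sigma \in F_\nu: \mu \ge \nu \ge \mu^{t}_C} \sigma\bigr)^{\otimes 2}$, which is the desired conclusion.

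I do not expect a serious obstacle: the strategy is simply to collect one ingredient per section and read off the answer. The only delicate point is confirming that $N^{\lambda}_{\emptyset, \lambda} = 1$ in the branching formula (via Theorem~19.22 and~(25.37) of \cite{fulton1991representation}), which reduces to the tautological fact that the Littlewood--Richardson coefficient associated to the trivial skew shape is $1$; after this, the remainder is pure bookkeeping with no further computation required.
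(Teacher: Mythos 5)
Your proposal is correct and follows essentially the same route as the paper: Corollary \ref{exterior_power_tensor_appears_in_type_C}, the Kostka-number expansion of $\bigwedge^i V \otimes \bigwedge^j V$, and the $\GL(V)\to\mathsf{O}(V)$ branching rule, with the only (harmless) difference that you exhibit the explicit witness pair $(i,j)=(r+s,r)$ and the coefficients $K_{(r+s,r),(r+s,r)}=N^{\lambda}_{\emptyset\lambda}=1$, whereas the paper enumerates all constituents over all $(i,j)$ via the dominance criterion for Kostka numbers. No gaps.
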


We remark on the case when $\mu$ is self-dual:

\begin{corollary}
 Let $n=k(k+1)$ and $W$ be the Weyl group of $\mathfrak{sp}(2n)$. 
 Suppose $\mu=(2k,2k,\cdots,2,2)$ is the unique self-dual minimal solvable orbit.
Then $(\bigoplus_{\sigma\in F_\mu} \sigma)^{\otimes 2} $ contains all the irreducible characters of $W$ occurring in $\mathbb{S}_{[\lambda]} V$, where $\lambda=(2^r,1^s)$ for $ 0\leq 2r+s\leq n$, and $V$ is the reflection representation of W.
\end{corollary}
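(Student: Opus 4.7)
The plan is to deduce this corollary directly from the preceding one by verifying that the self-duality hypothesis collapses the indexing set of the direct sum to a single Lusztig family. First, I would confirm that $\mu = (2k, 2k, 2k-2, 2k-2, \ldots, 2, 2)$ is self-dual under the Spaltenstein map $d$ for type $C$. Taking the transpose via the standard Young-diagram recipe gives $\mu^{t} = (2k, 2k, 2k-2, 2k-2, \ldots, 2, 2) = \mu$, and since every part of $\mu$ is even, $\mu^{t}$ already lies in $P_{-1}(2n)$, so no $C$-collapse is required. Hence $\mu^{t}_{C} = \mu^{t} = \mu$, and $\mu = d(\mu)$ as required (this also agrees with the enumeration of self-dual minimal solvable orbits preceding Conjecture \ref{gen_Saxl_conj_ver_II}).

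Next, I would apply the previous corollary with this $\mu$. The indexing set of the direct sum there runs over special orbits $\nu$ with $\mu \ge \nu \ge \mu^{t}_{C}$. Under the self-duality $\mu = \mu^{t}_{C}$, antisymmetry of the closure order forces $\nu = \mu$, so the only Lusztig family contributing is $F_{\mu}$. Substituting this into the statement of the previous corollary yields exactly the desired containment: $(\bigoplus_{\sigma \in F_{\mu}} \sigma)^{\otimes 2}$ contains every irreducible character of $W$ appearing in $\mathbb{S}_{[\lambda]}V$ for $\lambda = (2^{r}, 1^{s})$ with $0 \le 2r + s \le n$.

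Since all the heavy lifting, namely the spin-representation argument combining Corollary \ref{enhancedwtype}, Remark \ref{SFotimesSF}, the branching from $\GL(V)$ to $\o(V)$, and the Littlewood--Richardson bookkeeping, has already been carried out in the preceding corollary, there is no substantive obstacle here; the only point worth verifying carefully is that the closure-ordering interval $[\mu^{t}_{C}, \mu]$ is a singleton, which is immediate once self-duality is established.
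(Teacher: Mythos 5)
Your proposal is correct and matches the paper's (implicit) argument: the paper states this corollary as an immediate specialization of the preceding one, precisely because self-duality $\mu = \mu^{t}_{C}$ collapses the closure-ordering interval to the singleton $\{\mu\}$, leaving only the family $F_{\mu}$ in the direct sum. Your verification that $\mu^{t} = \mu$ lies in $P_{-1}(2n)$ (so no $C$-collapse is needed, and $\mu$ is special) is exactly the check required.
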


\subsection{Generalised Saxl Conjecture for Type D}

In this section, we look into Saxl's conjecture for type D. Let $W=W(D_n)$ be the Weyl group of type D, which is an index 2 subgroup of $W(C_n)$. We describe the representations of $W$ in terms of the representations of $W(C_n)$ as follows: 

\begin{proposition}[Proposition 11.4.4 of \cite{carter1993finite}]
The representation $\sigma_{(\alpha,\beta)}$ remains irreducible on restriction to $W(D_n)$ if $\alpha \neq \beta$, and $\sigma_{(\alpha,\beta)},\sigma_{(\beta,\alpha)}$ coincide on restriction to $W(D_n)$. If $\alpha=\beta$, then $\sigma_{(\alpha,\alpha)}$ decomposes into two irreducible characters $\sigma_{(\alpha,\alpha)}',\sigma_{(\alpha,\alpha)}''$. This gives an exact list of irreducible representations of $W(D_n)$.
\end{proposition}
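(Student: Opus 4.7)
The plan is to exploit the fact that $W(D_n) \trianglelefteq W(C_n)$ has index $2$, so the restriction fits into the classical Clifford-theoretic dichotomy for normal subgroups of index $2$. Let $\epsilon \colon W(C_n) \to \{\pm 1\}$ be the non-trivial character whose kernel is $W(D_n)$; concretely, $\epsilon$ is trivial on the $S_n$ factor and equals the product of signs on the $(\Z/2\Z)^n$ factor. First I would recall the general fact: for any $\sigma \in \Irr(W(C_n))$, either $\sigma \otimes \epsilon \not\cong \sigma$, in which case $\Res^{W(C_n)}_{W(D_n)} \sigma$ is irreducible and $\Res \sigma = \Res(\sigma \otimes \epsilon)$, or $\sigma \otimes \epsilon \cong \sigma$, in which case $\Res \sigma$ splits as a sum of two distinct irreducibles of $W(D_n)$. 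This is standard and follows from Mackey's criterion together with Frobenius reciprocity.

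The core computation is then to identify the action of $\otimes \epsilon$ on the bi-partition parameterisation, namely to show
\[
\sigma_{(\alpha,\beta)} \otimes \epsilon \;\cong\; \sigma_{(\beta,\alpha)}.
\]
I would do this by unwinding the construction of $\sigma_{(\alpha,\beta)}$: if $\alpha \vdash k$ and $\beta \vdash n-k$, then $\sigma_{(\alpha,\beta)}$ is obtained by inducing from $W(C_k) \times W(C_{n-k}) \hookrightarrow W(C_n)$ the representation $\widehat{\sigma}_\alpha \boxtimes \widehat{\sigma}_\beta$, where $\widehat{\sigma}_\alpha$ is $\sigma_\alpha$ extended trivially along the $(\Z/2\Z)^k$ factor and $\widehat{\sigma}_\beta$ is $\sigma_\beta$ extended by the sign character along the $(\Z/2\Z)^{n-k}$ factor. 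Applying the projection formula $\Ind_H^G(V) \otimes W = \Ind_H^G(V \otimes \Res^G_H W)$ and observing that $\Res \epsilon$ is the sign character on both copies of $(\Z/2\Z)^\ast$, the tensor product swaps the two roles of ``trivial extension'' and ``sign extension''; conjugating by a suitable element of $W(C_n)$ that interchanges the two blocks then reassembles this as $\sigma_{(\beta,\alpha)}$.

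With the identity $\sigma_{(\alpha,\beta)} \otimes \epsilon = \sigma_{(\beta,\alpha)}$ in hand, the dichotomy immediately gives the first three assertions: when $\alpha \neq \beta$, the restriction is irreducible, the restrictions of $\sigma_{(\alpha,\beta)}$ and $\sigma_{(\beta,\alpha)}$ coincide; when $\alpha = \beta$, the representation is fixed by $\otimes \epsilon$ and so splits as $\sigma_{(\alpha,\alpha)}^+ \oplus \sigma_{(\alpha,\alpha)}^-$. To check completeness, I would count: unordered pairs $\{(\alpha,\beta) : \alpha \neq \beta, \ |\alpha|+|\beta|=n\}$ contribute one character each, and bi-partitions with $\alpha = \beta$ contribute two each, and this total matches the known number of conjugacy classes of $W(D_n)$ (which can be indexed by signed cycle types subject to the even-sign condition, with ``very even'' classes splitting in two).

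The main obstacle is Step 3, i.e.\ verifying $\sigma_{(\alpha,\beta)} \otimes \epsilon \cong \sigma_{(\beta,\alpha)}$ cleanly from the induction recipe. The bookkeeping for the embedding $W(C_k) \times W(C_{n-k}) \hookrightarrow W(C_n)$ and for which $(\Z/2\Z)$-character is attached to which partition is easy to get wrong, and the required reindexing via an element of $W(C_n)$ that swaps the two blocks must be shown to preserve the induced module up to isomorphism. Everything else (the dichotomy in Step 2 and the counting in Step 4) is formal.
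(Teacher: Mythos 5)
The paper does not prove this statement; it simply quotes Proposition 11.4.4 of Carter, whose proof is exactly the argument you outline: Clifford theory for the index-two normal subgroup $W(D_n)\trianglelefteq W(C_n)$ together with the identification $\sigma_{(\alpha,\beta)}\otimes\epsilon\cong\sigma_{(\beta,\alpha)}$, which does follow from the projection formula applied to the inductive construction of $\sigma_{(\alpha,\beta)}$ as you describe. Your proposal is correct; the only simplification worth noting is that exhaustiveness needs no counting of conjugacy classes, since every $\tau\in\Irr(W(D_n))$ is a constituent of $\Res_{W(D_n)}\sigma$ for any irreducible constituent $\sigma$ of $\Ind_{W(D_n)}^{W(C_n)}\tau$ by Frobenius reciprocity, and Clifford theory shows $\Res\sigma\cong\Res\sigma'$ only when $\sigma'\in\{\sigma,\sigma\otimes\epsilon\}$.
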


Let $P_1(2n)$ be the partitions of $2n$ in which even parts occur with even multiplicity. The nilpotent orbits in $\mathfrak{so}(2n)$ are parameterised by $P_1(2n)$, except that "very even" partitions (those with only even parts, each having even multiplicity) correspond to two orbits, denoted as $I,II$. The Spaltenstein map $d:P_1(2n) \rightarrow P_1(2n)$ is given by $\nu \mapsto \nu^t_D$, where the subscript $D$ represents the $D$-collapse. It induces order-reversing maps to the corresponding sets of nilpotent orbits, if we decree that the numerals $I$ and $II$ attached to a very even partition in $P_1(2n)$ get interchanged by $d$ if $n$ is odd, or preserved if $n$ is even. It is clear from the definition that an orbit $\nu\in P_{1}(2n)$ is special if and only if $\nu^t\in P_{1}(2n)$.

A nilpotent orbit is solvable if and only if the corresponding partition where all parts are odd and appear with multiplicity at most two. We now recall the explicit construction of irreducible genuine representations of $\Tw$:

We define an equivalence relation $\sim$ on $P(n)$ given by $\lambda\sim \lambda^t$. As before, we also define an equivalence relation $\widetilde{\sigma}\sim \widetilde{\sigma}\otimes \sgn$ on $\Irrg(\Tw)$.

\begin{theorem}[\cite{ReadProjectiveReps}, Chapter 7 and 8]
Let $\Tw$ be the Pin cover of Weyl group of type $D_n$. There is a one-to-one correspondence
     \[
    \Irrg(\Tw) / \sim  \quad \leftrightarrow \quad P(n) / \sim
    \]

More precisely, recall the irreducible representations $\sigma_\lambda$ of $S_n$ and $\widetilde{\sigma}_\lambda$ of $\widetilde{W(C_n)}$. We have:

\begin{enumerate}
    \item if $n$ is odd, every irreducible genuine $\widetilde{W(C_n)}$ representation restricts to a genuine irreducible $\Tw$-representation, and this gives a complete set of inequivalent irreducible genuine representations. Moreover $\widetilde{\sigma}_\lambda \sim \widetilde{\sigma}_{\lambda^t}$.
    
    \item if $n$ is even, and if $\lambda=\lambda^t$, then $\widetilde{\sigma}_\lambda$ restricts to a sum of two associate irreducible $\Tw$-representations, namely $\Tsig_\lambda'$ and $\Tsig_\lambda''$; otherwise, $\widetilde{\sigma}_\lambda$ restricts to an irreducible $\Tw$-representation.
\end{enumerate}
\end{theorem}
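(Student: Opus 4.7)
The plan is to apply Clifford--Mackey theory to the index-$2$ normal inclusion $\Tw \subset \widetilde{W(C_n)}$. Write $\epsilon_D$ for the one-dimensional character of $\widetilde{W(C_n)}$ inflated from the sign character of $W(C_n)/W(D_n) \cong \Z/2\Z$; concretely, $\epsilon_D$ takes the value $-1$ on the sign-change reflections in the $(\Z/2\Z)^n$ factor of $W(C_n)$ and $+1$ on the transpositions from the $S_n$ factor, and its kernel in the Pin cover is precisely $\Tw$. Clifford theory then implies: for each $\rho \in \Irrg(\widetilde{W(C_n)})$, the restriction $\rho|_{\Tw}$ is irreducible exactly when $\rho \otimes \epsilon_D \not\cong \rho$ (in which case $\rho|_{\Tw} \cong (\rho \otimes \epsilon_D)|_{\Tw}$), while otherwise $\rho|_{\Tw}$ splits as a sum of two non-isomorphic associate irreducibles.

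The heart of the proof is therefore the computation of $\widetilde{\sigma}_\lambda \otimes \epsilon_D$ in canonical form. The key ingredients are the standard twist identities for the bi-partition characters of $W(C_n)$ (cf.\ Section 11.4 of \cite{carter1993finite}), namely $\sigma_{(\alpha,\beta)} \otimes \epsilon_D = \sigma_{(\beta,\alpha)}$ and $\sigma_{(\alpha,\beta)} \otimes \sgn = \sigma_{(\beta^t,\alpha^t)}$, combined with the parity-dependent behavior $S \otimes \sgn = S$ when $n$ is even and $S^{\pm} \otimes \sgn = S^{\mp}$ when $n$ is odd. Putting these together one obtains
\[
\widetilde{\sigma}_\lambda \otimes \epsilon_D \;=\; \sigma_{(\emptyset,\lambda)} \otimes S^{(\pm)} \;=\; \bigl(\sigma_{(\emptyset,\lambda)} \otimes \sgn\bigr) \otimes \bigl(S^{(\pm)} \otimes \sgn\bigr) \;=\; \sigma_{(\lambda^t,\emptyset)} \otimes \bigl(S^{(\pm)} \otimes \sgn\bigr).
\]

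Both cases of the theorem then follow. For $n$ even, $\widetilde{\sigma}_\lambda \otimes \epsilon_D = \widetilde{\sigma}_{\lambda^t}$, and by injectivity of the $P(n)$-parameterisation of $\Irrg(\widetilde{W(C_n)})$ recorded in the preceding theorem this coincides with $\widetilde{\sigma}_\lambda$ iff $\lambda = \lambda^t$, giving the splitting $\Tsig_\lambda' \oplus \Tsig_\lambda''$ in the self-dual case and the identification $\widetilde{\sigma}_\lambda|_{\Tw} \cong \widetilde{\sigma}_{\lambda^t}|_{\Tw}$ otherwise. For $n$ odd the spinor factor flips from $S^{\pm}$ to $S^{\mp}$, so $\widetilde{\sigma}_\lambda \otimes \epsilon_D$ never coincides with $\widetilde{\sigma}_\lambda$ regardless of $\lambda$; hence every $\widetilde{\sigma}_\lambda$ restricts irreducibly, and the Clifford identity $\rho|_{\Tw} \cong (\rho \otimes \epsilon_D)|_{\Tw}$ together with $\sgn|_{W(D_n)} = \sgn_{D_n}$ yields $\widetilde{\sigma}_\lambda \sim \widetilde{\sigma}_{\lambda^t}$ inside $\Tw$.

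To conclude, I would check that the above assignment induces a bijection $\Irrg(\Tw)/{\sim} \leftrightarrow P(n)/{\sim}$: surjectivity is the standard fact that every $\Tw$-irreducible appears in the restriction of some $\widetilde{W(C_n)}$-irreducible (Frobenius reciprocity applied to induction), while injectivity is a direct matching of the $\sim$-classes on each side, easily cross-checked by summing dimensions against $|\Tw| = 2|W(D_n)|$. The main obstacle is the bookkeeping of the two competing sign-type characters $\sgn$ and $\epsilon_D$ together with the parity-dependent action of $\sgn$ on the spinor factor; once that parity distinction has been cleanly separated, the rest of the argument is a routine application of Clifford theory.
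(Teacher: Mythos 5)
This theorem is quoted from Read \cite{ReadProjectiveReps} and the paper supplies no proof of its own, but your reconstruction is exactly the standard route (and the one Read's Chapters 7--8 follow): Clifford theory for the index-$2$ inclusion $\Tw\subset\widetilde{W(C_n)}$, with the decision of whether $\widetilde{\sigma}_\lambda$ restricts irreducibly governed by whether $\widetilde{\sigma}_\lambda\otimes\epsilon_D\cong\widetilde{\sigma}_\lambda$, and your computation $\widetilde{\sigma}_\lambda\otimes\epsilon_D=\sigma_{(\lambda^t,\emptyset)}\otimes(S^{(\pm)}\otimes\sgn)$ together with the parity-dependent behaviour of $\sgn$ on the spinor factor correctly yields both cases. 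The one step I would not label routine is your final bijectivity check: when $n$ is even and $\lambda=\lambda^t$, one must verify that the two constituents $\Tsig'_\lambda,\Tsig''_\lambda$ are interchanged (rather than each fixed) by tensoring with $\sgn_{D_n}$, so that they form a single $\sim$-class matching the singleton class of $\lambda$ in $P(n)/\sim$; a dimension count cannot distinguish these two possibilities, so this requires either the explicit character values in Read or the observation that $\sgn_{D_n}$ is realised by conjugation by a suitable odd element of $\widetilde{W(C_n)}$, which permutes the two constituents nontrivially.
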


Let $\O$ be the unique minimal solvable orbit. Let $\lambda$ be any partition of $n$ corresponding to a genuine representation associated with $\mu$ in the above setting. For a family $F$, denote $O_F$ as the special orbit corresponding to $F$. Whichever the parity, Corollary \ref{enhancedwtype} controls the $W$-constituents of $\Tsig_\lambda \otimes \SF$. By the same discussion of type C, we get an almost identical result for Saxl's conjecture for type D as in last section:

\begin{corollary}
 Let $W$ be the Weyl group of $\mathfrak{so}(2n)$, and $\O$ be the
 minimal solvable orbit.
Then $$\Big(\bigoplus_{\sigma\in F: \ \O \ge \O_F \ge d(\O)} \sigma \Big)^{\otimes 2} $$ contains all of the irreducible characters of $W$ occurring in $\mathbb{S}_{[\lambda]} V$, where $\lambda=(2^r,1^s)$ for $ 0\leq 2r+s\leq n$, and $V$ is the reflection representation of W.
\end{corollary}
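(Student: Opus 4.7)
My plan is to mirror the proof of Corollary~\ref{exterior_power_tensor_appears_in_type_C} almost verbatim, adjusting only for the parametrisation of $\Irrg(\Tw)$ in type D. First I fix any genuine irreducible $\Tsig \in \Psi^{-1}(\O)$, which exists by surjectivity of $\Psi$; concretely, $\Tsig$ arises as (a constituent of) the restriction to $\Tw$ of a genuine $\widetilde{W(C_n)}$-representation $\Tsig_\lambda$ attached to $\O$, with $\lambda \in P(n)$, possibly splitting into associate pairs when $n$ is even and $\lambda = \lambda^t$. Corollary~\ref{enhancedwtype} then forces every $W$-constituent of $\Tsig \otimes \SF$ into $\bigoplus_{\sigma \in F:\, \O \ge \O_F \ge d(\O)} \sigma$, so every $W$-constituent of $(\Tsig \otimes \SF)^{\otimes 2}$ appears in the squared sum on the right-hand side of the claim.

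The next step is to apply Remark~\ref{SFotimesSF} to rewrite
\[
(\Tsig \otimes \SF)^{\otimes 2} \;=\; (\Tsig \otimes \Tsig) \otimes (\SF \otimes \SF) \;=\; a_V \,(\Tsig \otimes \Tsig) \otimes \bigwedge\nolimits^\bullet V.
\]
By the same reality argument as in Proposition~\ref{saxlA-1}, the character of $\Tsig$ is real: all $W$-constituents of $\Tsig \otimes \SF$ are real-valued (since $W = W(D_n)$ has only integer-valued characters), so $\Tsig \otimes \SF$ has real character, and since $\SF$ is self-dual this passes down to $\Tsig$. Hence $\Tsig \cong \Tsig^{*}$ and $\Tsig \otimes \Tsig$ contains the trivial character, so $a_V \bigwedge^\bullet V$ appears as a subrepresentation of $(\Tsig \otimes \SF)^{\otimes 2}$. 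Combining with the previous paragraph, every $W$-constituent of $\bigwedge^i V \otimes \bigwedge^j V$ for $0 \le i,j \le n$ lies inside the target.

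The final step is the $\GL(V)$-to-$\text{O}(V)$ branching calculation carried out immediately after Corollary~\ref{exterior_power_tensor_appears_in_type_C}: the identity $\bigwedge^i V \otimes \bigwedge^j V = \bigoplus_{\alpha} K_{\alpha,(i,j)}\,\mathbb{S}_{\alpha^{t}} V$ together with the Kostka-positivity constraint $\alpha \ge (i,j)$ isolates exactly the Weyl modules $\mathbb{S}_{(2^r, 1^s)} V$ with $0 \le 2r + s \le n$; the Littlewood--Richardson branching rule then produces only $\text{O}(V)$-irreducibles of the same shape $\mathbb{S}_{[(2^{r'}, 1^{s'})]} V$, which restrict to $W \subset \text{O}(V)$ to give precisely the $W$-types claimed. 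The main obstacle I expect is the bookkeeping around the type D splitting phenomena---both the representation splitting $\sigma_{(\alpha,\alpha)} = \sigma_{(\alpha,\alpha)}' + \sigma_{(\alpha,\alpha)}''$ and the orbit splitting into $I, II$ labels for very-even partitions---but since the minimal solvable orbit $\O = (2k-1, 2k-1, \ldots, 3, 3, 1, 1)$ has only odd parts it is not very even, so no orbit-splitting ambiguity arises, and the reality argument descends from the ambient $\widetilde{W(C_n)}$ without serious obstruction.
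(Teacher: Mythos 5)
Your overall strategy is the paper's: the paper proves this corollary by reducing to the type $C$ discussion, i.e.\ controlling the constituents of $\Tsig_\lambda\otimes\SF$ via Corollary \ref{enhancedwtype}, rewriting the square via Remark \ref{SFotimesSF}, extracting $\bigwedge^\bullet V\otimes\bigwedge^\bullet V$, and then running the $\GL(V)\to\o(V)$ branching. Your last step (Kostka positivity plus the Littlewood--Richardson branching to $\mathbb{S}_{[(2^{r'},1^{s'})]}V$) is exactly the paper's.

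There is, however, one genuine gap, and it sits precisely at the point you wave away as "no serious obstruction." You fix a \emph{single} irreducible $\Tsig\in\Psi^{-1}(\O)$ and need $\Tsig\otimes\Tsig\supseteq\triv$, i.e.\ $\Tsig\cong\Tsig^*$. In the split case of type $D$ ($n$ even and $\lambda=\lambda^t$, which does occur for the relevant orbit, e.g.\ $n=k^2$ with $k$ even and $\lambda=(k^k)$), the restriction of $\Tsig_\lambda$ breaks into two associate pieces $\Tsig_\lambda',\Tsig_\lambda''$, and an individual piece need not be self-dual: the standard pattern for such associate pairs is that their characters agree except on a few classes where they can be complex conjugates of one another (compare $\Min(\Omega_{\Tw})=\{\widetilde\chi_1,\widetilde\chi_2\}$ for the dihedral groups, a conjugate non-self-dual pair). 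Your reality argument does not close this: from $\chi_{\Tsig}\,\chi_{\SF}$ real and $\chi_{\SF}$ real you only get $\chi_{\Tsig}(g)\in\R$ where $\chi_{\SF}(g)\neq 0$, and $\chi_{\SF}$ vanishes exactly on the classes where associate characters are allowed to differ (since $\chi_{\SF}(g)^2=a_V\det(1+g|_V)$). If $(\Tsig')^*\cong\Tsig''$, then $\Tsig'\otimes\Tsig'$ contains neither $\triv$ nor $\sgn$, and your extraction of $a_V\bigwedge^\bullet V$ fails. The repair is what the paper actually does: take the \emph{sum} over $\Psi^{-1}(\O)$, i.e.\ work with $\Tsig_\lambda\otimes\SF=a_V\,\Res^{W(C_n)}_{W(D_n)}\bigl(\sigma_{(\lambda,\emptyset)}\bigr)\otimes\bigwedge^\bullet V$, so that the trivial constituent is extracted from $\Res(\sigma_{(\lambda,\emptyset)})^{\otimes 2}$, where self-duality is automatic because $\sigma_{(\lambda,\emptyset)}$ is an honest (rational-valued) representation of the Weyl group. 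With that substitution the rest of your argument goes through.
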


\begin{remark}
    We don't attempt to phrase the corollary in terms of the partitions, simply because the closure ordering for type D is a little bit involved. One may refer to \cite{collingwood1993nilpotent} for more details. However, we won't encounter this issue when considering the special case where the minimal solvable orbit is self-dual.
\end{remark}

\begin{corollary}
     Let $n=k^2$ and $W$ be the Weyl group of $\mathfrak{so}(2n)$.
    Suppose the nilpotent orbit $\mu=(2k-1,2k-1,2k-3,2k-3,\cdots,1,1)$ corresponds to Lusztig family $F_\mu$. Then $(\bigoplus_{\sigma\in F_\mu} \sigma)^{\otimes 2} $ contains all the irreducible characters of $W$ occurring in $\mathbb{S}_{[\lambda]} V$, where $\lambda=(2^r,1^s)$ for $ 0\leq 2r+s\leq n$ , and $V$ is the reflection representation of W.
\end{corollary}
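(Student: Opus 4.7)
The plan is to deduce this from the preceding corollary by verifying that, under the hypothesis $n=k^2$, the orbit $\mu$ is the unique minimal solvable nilpotent orbit of $\mathfrak{so}(2n)$ and moreover is self-dual, i.e.\ $d(\mu)=\mu$. Granting these, the constraint $\mu\ge\O_F\ge d(\mu)$ appearing in the previous corollary collapses to $\O_F=\mu$, which forces $F=F_\mu$ since the assignment $F\mapsto\O_F$ from Lusztig families to their attached special orbits is a bijection (each family contains a unique special character, and the Springer correspondence associates a unique special orbit to it). The direct sum in the previous corollary therefore reduces to $\bigoplus_{\sigma\in F_\mu}\sigma$, giving the claim.

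First I would confirm that $\mu=(2k-1,2k-1,2k-3,2k-3,\ldots,1,1)$ is the unique minimal solvable orbit: all parts are odd and every distinct part has multiplicity exactly two, so $\mu\in P_1(2n)$ is solvable by the parameterisation table in the preliminaries, and Proposition 5.5 of \cite{CIUBOTARU20221} identifies it as the minimum in the closure ordering. Next I would compute $d(\mu)=(\mu^t)_D$. Reading column lengths gives
\[
\mu^t=\bigl(2k,\,2k-2,\,2k-2,\,2k-4,\,2k-4,\,\ldots,\,2,\,2\bigr),
\]
which lies outside $P_1(2n)$ only because the top part $2k$ has multiplicity one. Running the $D$-collapse iteratively (Definition \ref{CollapseOfPartition}), at each stage the unique even part with odd multiplicity is the current largest value, say $2j$: replacing its sole occurrence by $2j-1$ and bumping the immediately following part up by one produces a new pair $(2j-1,2j-1)$ at the top but leaves behind a fresh odd-multiplicity even part $2(j-1)$ below. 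Iterating from $j=k$ down to $j=1$ (where the final step bumps an implicit zero part up to $1$, introducing the bottom pair $(1,1)$) recovers exactly $\mu$, so by Proposition \ref{SpaltensteinTypeBCD}, $d(\mu)=\mu$.

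With both ingredients in place, the preceding corollary applied to $\O=\mu$ immediately yields the statement. The main obstacle is really just the $D$-collapse bookkeeping, which is elementary once organised inductively. A minor technicality worth flagging is that the type-$D$ Spaltenstein map behaves subtly on very-even partitions (through the $I/II$ labels), but here $\mu^t$ fails $P_1(2n)$ outright, so no such ambiguity arises during the collapse.
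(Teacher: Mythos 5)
Your proposal is correct and follows exactly the route the paper intends: the corollary is stated as an immediate specialisation of the preceding general type~D corollary to the case where the minimal solvable orbit is self-dual, which is precisely your reduction. Your explicit $D$-collapse computation verifying $d(\mu)=\mu$ (and the observation that no very-even $I/II$ ambiguity arises) fills in bookkeeping the paper leaves implicit but is entirely consistent with it.
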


\section{A Characterisation of the Cuspidal Families}

In this section, we focus on the case that the minimal solvable orbit $\O$ is self-dual. Our goal is to describe the Lusztig family corresponding to this orbit (via the Springer correspondence) in terms of the Pin cover of $W$. We saw in Remark \ref{cuspidal} that this family is precisely the cuspidal family, except for type $A$.

As we've seen in Corollary \ref{enhancedwtype}, given a self-dual solvable orbit $\O$, and a $\Tw$-type $\widetilde{\sigma}$ corresponding to $\O$, then any $W$-type appearing in $\widetilde{\sigma} \otimes \SF$ must lie in the family $F$ corresponding to $\O$. It's natural to wonder whether we have a converse:

\begin{theorem}
     \label{getallfamily}
     Let $\mathfrak{g}$ be a simple Lie algebra with Weyl group $W$. Suppose $\mathfrak{g}$ has a solvable nilpotent orbit $\O_F$ corresponding to Lusztig family $F$ with the property that $d(\O_F)=\O_F$. Let $\Psi^{-1}(\O_F)$ be the set of all irreducible genuine representations of $\Tw$ corresponding to $\O_F$. Then, the $W$-constituents of $\bigoplus\limits_{\widetilde{\sigma}\in \Psi^{-1}(\O_F)}  (\widetilde{\sigma}\otimes \SF)$ are precisely the $W$-types in family $F$.
\end{theorem}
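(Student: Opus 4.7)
The containment direction is immediate: any $W$-constituent $\sigma$ of $\bigoplus_{\Tsig\in\Psi^{-1}(\O_F)}(\Tsig\otimes\SF)$ lies in $F$ by Corollary \ref{enhancedwtype}, which places $\sigma$ in some Lusztig family $F'$ with $\O_F\ge\O_{F'}\ge d(\O_F)$; the self-duality $\O_F=d(\O_F)$ collapses this sandwich to $\O_{F'}=\O_F$, i.e.\ $F'=F$.

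The reverse containment is where the content lies: one must exhibit every $\sigma\in F$ as a $W$-constituent. Characters of $F$ of the form $\sigma_{\O_F,\phi}$ with $\phi\in\Irr_0(A(e))$ for some $e\in\O_F$ (i.e.\ those attached directly to $\O_F$) are handled by Theorem \ref{generalisedSpringer}(2): for each such $\phi$ there exists some $\Tsig\in\Psi^{-1}(\O_F)$ with $\Hom_W(\sigma_{\O_F,\phi},\Tsig\otimes S)\neq 0$, and $S$ is a summand of $\SF$. Applying the same argument to $\sigma_{\O_F,\phi}\otimes\sgn$ (which is again in $F$, since $d(\O_F)=\O_F$) and using $\SF\otimes\sgn=\SF$ gives the analogous conclusion, provided $\Psi^{-1}(\O_F)$ is closed under $\sgn$-twist --- which holds at least in classical types, where the fibre has one or two elements that are $\sgn$-dual to one another, as noted in the introduction.

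The main obstacle is the remaining characters $\sigma_{e,\phi}\in F$ whose attached orbit $G\cdot e$ is strictly below $\O_F$: Theorem \ref{generalisedSpringer}(2) only places such $\sigma_{e,\phi}$ inside $\Tsig'\otimes S$ for a $\Tsig'$ living in the wrong fibre $\Psi^{-1}(G\cdot e)$. I would deal with these by a case-by-case analysis, type by type. Type $A$ is vacuous, since the relevant family is then a singleton. For the exceptional types the statement reduces to a finite verification in CHEVIE \cite{GAP3,CHEVIE}, analogous in spirit to the decomposition table used in the proof of Conjecture \ref{gen_Saxl_conj_ver_II}. In types $C_n$ (with $n=k(k+1)$) and $D_n$ (with $n=k^2$), the plan is to combine the symbol-theoretic description of the cuspidal family with the explicit parameterisation of $\Psi^{-1}(\O_F)$ from Theorem \ref{Tw_parameterisation} (and its type $D$ counterpart \cite{ReadProjectiveReps}) in order to match, symbol by symbol, each $\sigma\in F$ with an explicit $\Tsig\in\Psi^{-1}(\O_F)$ whose tensor product with $\SF$ contains it. The hardest part will be this combinatorial bookkeeping, as both the cuspidal family $F$ and the fibre $\Psi^{-1}(\O_F)$ can be large in general, and one must keep track of the $\sgn$-twist symmetry relating pairs of $\Tsig$'s on the one hand and pairs of $\sigma$'s on the other.
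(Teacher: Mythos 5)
Your overall architecture matches the paper's: the containment direction via Corollary \ref{enhancedwtype} plus self-duality is exactly the paper's argument, and the case split (type $A$ trivial, exceptional types by CHEVIE, types $C$ and $D$ by combinatorics) is also the paper's. Your use of Theorem \ref{generalisedSpringer}(2) correctly accounts for the characters Springer-attached to $\O_F$ itself, and you correctly identify that the remaining characters of $F$ --- those attached to orbits strictly below $\O_F$ --- are the real difficulty. (One small simplification: you do not need $\Psi^{-1}(\O_F)$ to be closed under the $\sgn$-twist to get $\sigma_{\O_F,\phi}\otimes\sgn$; since $\SF\otimes\sgn=\SF$, each $\Tsig\otimes\SF$ is already $\sgn$-stable as a $W$-representation.)

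However, for types $C$ and $D$ your proposal stops precisely where the content begins: ``match, symbol by symbol, each $\sigma\in F$ with an explicit $\Tsig$'' is a statement of the goal, not an argument, and you acknowledge the bookkeeping is the hardest part without carrying it out. The paper's route through this is concrete and worth knowing. In type $C$ with $n=k(k+1)$, the fibre $\Psi^{-1}(\O_F)$ is a single $\sgn$-orbit $\{\sigma_{(\lambda,\emptyset)}\otimes S^{(\pm)}\}$ with $\lambda=(k^{k+1})$, so by Remark \ref{SFotimesSF} the whole direct sum collapses to $a_V\,\sigma_{(\lambda,\emptyset)}\otimes\bigwedge^{\bullet}V$; there is no element-by-element matching to do. Lemma \ref{C_tensorwithwedge} then converts the multiplicity of $\sigma_{(\alpha,\beta)}$ in this tensor product into the single Littlewood--Richardson coefficient $N_{\alpha\beta^t}^{\lambda}$, and the symbol description of the family (Lemma \ref{family_C}: the sets $\{\alpha_i+i\}$ and $\{\beta_j+j\}$ partition $\{0,1,\dots,2k\}$) reduces the theorem to the purely combinatorial claim $N_{\alpha\beta^t}^{\lambda}>0$ for every such $(\alpha,\beta)$. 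That positivity is proved by an explicit iterative construction of a strict $\beta^t$-expansion of $\alpha$ reaching $\lambda$, peeling off one row of $\beta^t$ at a time and checking at each step that the partition property of the shifted entries is preserved (Proposition \ref{positiveRL_C}); type $D$ is identical with $\lambda=(k^k)$. Without this reduction to Littlewood--Richardson positivity and the expansion construction, your proposal for the classical types is a plan rather than a proof, so the central step is missing.
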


We immediately get an equivalent formulation of the generalised Saxl conjecture, Ver II: 

\begin{corollary}
    Let $\mathfrak{g}$ be a simple Lie algebra with Weyl group $W$. Suppose $\mathfrak{g}$ has a solvable nilpotent orbit $\O$ such that $d(\O)=\O$. Then the following are equivalent:
    \begin{enumerate}
        \item Conjecture \ref{gen_Saxl_conj_ver_II};
        \item $\bigwedge^{\bullet}(V)\otimes(\bigoplus_{\Tsig\in\Psi^{-1}(\O)} \Tsig)^{\otimes 2} $ contains all irreducible characters of $W$, where $V$ is the reflection representation of $W$.
    \end{enumerate}
\end{corollary}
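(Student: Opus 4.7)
The plan is to show that statements (1) and (2) are really the same assertion seen through the identity $\SF \otimes \SF = a_V \bigwedge^{\bullet}(V)$ from Remark \ref{SFotimesSF}. To set notation, write $T := \bigoplus_{\Tsig\in\Psi^{-1}(\O)}\Tsig$ and $A := \bigoplus_{\sigma\in F}\sigma$, where $F$ is the Lusztig family attached to $\O$. Theorem \ref{getallfamily} says exactly that the set of irreducible $W$-constituents of $T \otimes \SF$ equals the set of constituents of $A$ (multiplicities may differ, but every multiplicity is strictly positive on both sides).

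The first step is to expand $(T \otimes \SF)^{\otimes 2}$ using associativity of the tensor product and Remark \ref{SFotimesSF}:
\[
(T\otimes\SF)^{\otimes 2} \;=\; T^{\otimes 2}\otimes\SF^{\otimes 2} \;=\; a_V\cdot T^{\otimes 2}\otimes\bigwedge\nolimits^{\bullet}(V).
\]
Since $a_V$ is a positive integer, this representation has the same set of $W$-constituents as $T^{\otimes 2}\otimes \bigwedge^{\bullet}(V)$. The second step is the elementary observation that if two $W$-representations $X,Y$ share a common set of irreducible constituents with strictly positive multiplicities, then $X^{\otimes 2}$ and $Y^{\otimes 2}$ also share a common constituent set: by distributivity both are exactly $\{\sigma\in\Irr(W):\langle\sigma,\sigma_i\otimes\sigma_j\rangle>0\text{ for some common constituents }\sigma_i,\sigma_j\}$. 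Applying this to $X=T\otimes\SF$ and $Y=A$ gives that $(T\otimes\SF)^{\otimes 2}$ and $A^{\otimes 2}$ share their set of $W$-constituents.

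Chaining the two identifications, $\bigwedge^{\bullet}(V)\otimes T^{\otimes 2}$ and $A^{\otimes 2}$ have the same irreducible $W$-constituents, so one contains all of $\Irr(W)$ if and only if the other does. Since (1) is the statement that $A^{\otimes 2}$ contains all of $\Irr(W)$ and (2) is the statement that $\bigwedge^{\bullet}(V)\otimes T^{\otimes 2}$ does, this is precisely the desired equivalence. I do not expect any real obstacle here: the whole argument is formal manipulation, with all the nontrivial content already packaged into Theorem \ref{getallfamily} and the Clifford-algebra identity of Remark \ref{SFotimesSF}.
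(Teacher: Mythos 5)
Your proof is correct and follows exactly the route the paper intends (the paper states the corollary follows "immediately" from Theorem \ref{getallfamily} together with Remark \ref{SFotimesSF}): expand $(T\otimes\SF)^{\otimes 2}$ via the Clifford identity, and note that squaring preserves equality of constituent sets. The one explicit addition you make — the observation that two representations with the same irreducible support have squares with the same support — is the right justification for the multiplicity-independence that the paper leaves tacit.
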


Note that Theorem \ref{getallfamily} is clearly true for type $A$. Again for demonstration, we also prove it for the exceptional types by direct calculation.

\begin{proposition}
    Theorem \ref{getallfamily} holds for the exceptional types.
\end{proposition}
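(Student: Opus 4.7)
The plan is to verify the statement case-by-case for $G_2, F_4, E_6, E_7, E_8$ using GAP 3 together with CHEVIE, in the same computational spirit as the preceding proposition. One direction is free: by Corollary \ref{enhancedwtype}, the assumption $d(\O_F) = \O_F$ forces every $W$-constituent of $\widetilde{\sigma} \otimes \SF$ (for $\widetilde{\sigma} \in \Psi^{-1}(\O_F)$) to lie in the family $F$. So it suffices to show the reverse inclusion, namely that every $\sigma \in F$ appears as a $W$-constituent of $\bigoplus_{\widetilde{\sigma} \in \Psi^{-1}(\O_F)} \widetilde{\sigma} \otimes \SF$.

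For each exceptional type I would proceed in four steps: (i) read off $\O_F$ and $F$ from \cite{CIUBOTARU20221} and \cite{carter1993finite}, recalling that $\O_F \in \{G_2(a_1), F_4(a_3), D_4(a_1), A_4+A_1, E_8(a_7)\}$; (ii) enumerate the fibre $\Psi^{-1}(\O_F) \subseteq \Irrg(\Tw)$ explicitly; (iii) compute the character of each $\widetilde{\sigma} \otimes \SF$ using the character table of $\Tw$ and decompose it into $W$-types; (iv) collect the union of constituents from (iii) and check equality with $F$. Steps (iii) and (iv) are routine character arithmetic once the previous pieces are assembled.

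The main obstacle is step (ii): producing $\Psi^{-1}(\O_F)$ explicitly. By Theorem \ref{generalisedSpringer}(1), this fibre is exactly the set of genuine characters $\widetilde{\sigma}$ on which $\Omega_\Tw$ acts by the scalar $\langle h_e, h_e \rangle$ for any $e \in \O_F$, so it can be isolated by computing the Dirac scalars across $\Irrg(\Tw)$ and selecting those matching $\langle h_e, h_e \rangle$. A convenient sanity check along the way is a dimension count: $\sum_{\widetilde{\sigma} \in \Psi^{-1}(\O_F)} \dim(\widetilde{\sigma}) \cdot \dim(\SF)$ must be at least $\sum_{\sigma \in F} \dim(\sigma)$, with any excess explained by multiplicities that the full decomposition will make precise. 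The resulting multiplicity tables, analogous in format to the $E_7$ table displayed earlier, can then be relegated to the appendix, since the bulk of the work is bookkeeping rather than new mathematics.
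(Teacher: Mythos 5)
Your proposal is correct and matches the paper's proof, which likewise observes that the inclusion of constituents into $F$ is already given by Corollary \ref{enhancedwtype} and then verifies the reverse inclusion case-by-case with GAP 3 and CHEVIE, tabulating the fibres $\Psi^{-1}(\O_F)$ and the multiplicities of each $\sigma\in F$ in $\bigoplus_{\widetilde{\sigma}\in\Psi^{-1}(\O_F)}(\widetilde{\sigma}\otimes\SF)$ (the $G_2$ table in the body, the rest in the appendix). Your extra remarks on isolating $\Psi^{-1}(\O_F)$ via the scalar $\langle h_e,h_e\rangle$ from Theorem \ref{generalisedSpringer}(1) are a sound way to carry out that step.
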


\begin{proof}
    We verify the statement case-by-case using GAP 3 \cite{GAP3} and package CHEVIE \cite{CHEVIE}. We present the verification for $G_2$ here, using Morris' notation for projective representations of exceptional groups \cite{Morris_Exceptional}. In this case, $\O_F=G_2(a_1)$, and $\Psi^{-1}(\O_F)=\{2_s,2_{ss},2_{sss}\}$.\\

    For character $\phi$ occurring with multiplicity $m$ in $\bigoplus\limits_{\widetilde{\sigma}\in \Psi^{-1}(\O_F)}  (\widetilde{\sigma}\otimes \SF)$, list the data as: $\phi \; (m)$.

\begin{center}
\begin{tabular}{ c c c c }
$\phi_{1,3}' \;(1)$
& $\phi_{1,3}'' \; (1)$
& $\phi_{2,1} \; (1)$
& $\phi_{2,2} \; (2)$
\end{tabular}
\end{center}
    
    The remaining tables can be found in the appendix.
\end{proof}

We now prove Theorem \ref{getallfamily} for types C and D.

\subsection{Proof for Type C}

Let $n=k(k+1)$, and $\mu=(2k,2k,\cdots,2,2)$ be the self-dual solvable orbit.
Let $\lambda=(k^{k+1})$, so $\Psi^{-1}(\mu)=\{\sigma_{(\lambda,\emptyset)}\otimes S\}, \text{or } \{\sigma_{(\lambda,\emptyset)}\otimes S^{\pm}\}$, as in Theorem \ref{Tw_parameterisation}. Let $\Tsig$ be the sum of elements in $\Psi^{-1}(\mu)$, then we have $\widetilde{\sigma}\otimes \SF= a_V\sigma_{(\lambda,\emptyset)}\otimes \bigwedge^\bullet V$ by Remark \ref{SFotimesSF}. Therefore, to prove the statement it suffices to show that the $W$-types in $\sigma_{(\lambda,\emptyset)}\otimes \bigwedge^\bullet V$ exhaust the family containing $\sigma_{(\lambda,\emptyset)}$. We first recall a few important lemmas that reduce the problem into a combinatorial setting.

\begin{lemma}[Lemma 3.4, \cite{CIUBOTARU20221}]
\label{C_tensorwithwedge}
Let $(\alpha,\beta)\in BP(n)$, and $\gamma\in P(n)$. Then $$a_V\big\langle\sigma_{(\alpha,\beta)}, \sigma_{(\gamma,\emptyset)}\otimes \bigwedge\nolimits^\bullet V\big\rangle_{W(C_n)}=N_{\alpha\beta^t}^\gamma$$ where $N_{\alpha\beta^t}^\gamma$ is the Littlewood-Richardson coefficient.
\end{lemma}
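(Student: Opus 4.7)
The plan is to prove the lemma by decomposing $\sigma_{(\gamma,\emptyset)} \otimes \bigwedge^\bullet V$ explicitly into irreducible $W(C_n)$-representations and reading off the multiplicity of $\sigma_{(\alpha,\beta)}$. The key inputs are the projection formula for induced representations, the $S_n$-branching rule, and the Littlewood--Richardson rule for $W(C_n) = (\Z/2\Z)^n \rtimes S_n$ via Clifford theory.

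First I would identify each exterior power as an irreducible. The $(\Z/2\Z)^n$-weight decomposition of $\bigwedge^k V$ exhibits a single $W(C_n)$-orbit of weight vectors $\{e_I : |I|=k\}$, with stabilizer $W(C_{n-k}) \times W(C_k)$. On the vector $e_{\{1,\dots,k\}}$, the symmetric group $S_k$ acts by its sign character while $(\Z/2\Z)^k$ acts by the product of signs. Consequently
\[
\bigwedge\nolimits^k V \;\cong\; \Ind_{W(C_{n-k})\times W(C_k)}^{W(C_n)} \bigl(\sigma_{((n-k),\emptyset)} \boxtimes \sigma_{(\emptyset,(1^k))}\bigr) \;=\; \sigma_{((n-k),(1^k))},
\]
so that $\bigwedge^\bullet V = \bigoplus_{k=0}^n \sigma_{((n-k),(1^k))}$.

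Applying the projection formula term by term, together with the $S_n$-branching rule $\Res^{W(C_n)}_{W(C_{n-k})\times W(C_k)} \sigma_{(\gamma,\emptyset)} = \bigoplus_{\gamma_1\vdash n-k,\ \gamma_2\vdash k} c^\gamma_{\gamma_1 \gamma_2}\, \sigma_{(\gamma_1,\emptyset)} \boxtimes \sigma_{(\gamma_2,\emptyset)}$ (valid since $\sigma_{(\gamma,\emptyset)}$ is inflated from $S_n$), reduces the problem to computing $\sigma_{(\gamma_2,\emptyset)} \otimes \sigma_{(\emptyset,(1^k))}$ on $W(C_k)$. This is the sgn-dual of $\sigma_{(\gamma_2,\emptyset)}$, namely $\sigma_{(\emptyset,\gamma_2^t)}$, by the identity $\sigma_{(\alpha,\beta)} \otimes \sigma_{(\emptyset,(1^n))} = \sigma_{(\beta^t,\alpha^t)}$. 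Inducing $\sigma_{(\gamma_1,\emptyset)} \boxtimes \sigma_{(\emptyset,\gamma_2^t)}$ back to $W(C_n)$ collapses via the LR rule for the hyperoctahedral group to the single irreducible $\sigma_{(\gamma_1,\gamma_2^t)}$.

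Summing over $k$ and setting $\alpha=\gamma_1$ and $\beta=\gamma_2^t$ yields
\[
\sigma_{(\gamma,\emptyset)} \otimes \bigwedge\nolimits^\bullet V \;=\; \bigoplus_{(\alpha,\beta) \in BP(n)} N^\gamma_{\alpha\beta^t}\, \sigma_{(\alpha,\beta)},
\]
from which the stated identity follows; the factor $a_V$ enters by rewriting $\sigma_{(\gamma,\emptyset)} \otimes \bigwedge^\bullet V$ in terms of $\widetilde{\sigma}_\gamma \otimes \SF$ via $\SF \otimes \SF = a_V\, \bigwedge^\bullet V$ from Remark~\ref{SFotimesSF}, which is the bridge by which this inner product is phrased in spin-theoretic language in the source paper. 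The main technical step is the sgn-dual identity $\sigma_{(\alpha,\beta)} \otimes \sigma_{(\emptyset,(1^n))} = \sigma_{(\beta^t,\alpha^t)}$: verifying this requires tracking carefully how the sign character of $S_n$ and the product-of-signs character of $(\Z/2\Z)^n$ swap and transpose the two halves of a bipartition, but once it is in hand the rest of the argument is a routine application of Frobenius reciprocity and the Pieri rule.
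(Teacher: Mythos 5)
The paper does not actually prove this lemma --- it imports it verbatim from \cite{CIUBOTARU20221} --- so there is no internal proof to compare against; what you have written is a from-scratch argument, and its core is correct. The chain (identification $\bigwedge^k V \cong \Ind_{W(C_{n-k})\times W(C_k)}^{W(C_n)}(\triv\boxtimes\sgn)=\sigma_{((n-k),(1^k))}$, projection formula, $S_n$-branching with Littlewood--Richardson coefficients, the twist $\sigma_{(\gamma_2,\emptyset)}\otimes\sgn_{W(C_k)}=\sigma_{(\emptyset,\gamma_2^t)}$, and re-induction collapsing to the single irreducible $\sigma_{(\gamma_1,\gamma_2^t)}$ by transitivity of induction) is sound and yields
$\sigma_{(\gamma,\emptyset)}\otimes\bigwedge\nolimits^\bullet V=\bigoplus_{(\alpha,\beta)\in BP(n)}N^\gamma_{\alpha\beta^t}\,\sigma_{(\alpha,\beta)}$,
since for fixed $(\alpha,\beta)$ only the summand with $k=|\beta|$ contributes. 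The ``main technical step'' you flag is indeed just the standard fact, used elsewhere in the paper, that tensoring with $\sgn$ acts on bipartitions by $(\alpha,\beta)\mapsto(\beta^t,\alpha^t)$.

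The one genuine loose end is the prefactor $a_V$. Your computation establishes $\langle\sigma_{(\alpha,\beta)},\sigma_{(\gamma,\emptyset)}\otimes\bigwedge\nolimits^\bullet V\rangle_{W(C_n)}=N^\gamma_{\alpha\beta^t}$ with no factor, and your closing sentence invoking $\SF\otimes\SF=a_V\bigwedge^\bullet V$ does not actually insert a factor of $a_V$ on the left of that equality --- it only explains why the spin-theoretic quantity $\langle\sigma_{(\alpha,\beta)},\sigma_{(\gamma,\emptyset)}\otimes\SF\otimes\SF\rangle$ equals $a_V$ times your inner product. Note that the prefactor-free identity is the dimension-consistent one (for $n=1$, $\dim V$ odd so $a_V=2$, yet $\sigma_{((1),\emptyset)}\otimes\bigwedge^\bullet V=\triv\oplus\sgn$ gives multiplicity $1=N^{(1)}_{(1)\emptyset}$, not $1/2$). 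So you should either state and prove the identity without $a_V$ and then observe that the statement as quoted must be read as a multiplicity in $\widetilde{\sigma}_\gamma\otimes\SF$ with $\widetilde{\sigma}_\gamma$ the \emph{sum} of the genuine representations attached to $\gamma$ (where the factor $a_V$ really does appear, and which is all the paper ever uses, since only positivity matters), or explicitly reconcile your normalisation with the one in the displayed formula. As it stands, the last sentence papers over a discrepancy rather than resolving it.
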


\begin{remark}
    The reader may refer to Appendix A.1 of \cite{fulton1991representation} for the rules of calculating the Littlewood-Richardson coefficient, which will be used in the proof of Proposition \ref{positiveRL_C}.
\end{remark}

\begin{lemma}
\label{family_C}
Suppose $(\alpha,\beta)\in BP(n)$, then $\sigma_{(\alpha,\beta)}$ lies in the same family as $\sigma_{(\lambda,\emptyset)}$, where $\lambda=(k^{k+1})$, if and only if the following holds:

\begin{enumerate}
    \item $\alpha$ has $k+1$ components $\alpha_0\leq\alpha_1\leq\cdots\leq\alpha_k$, allowing zeroes.
     \item $\beta$ has $k$ components $\beta_0\leq\beta_1\leq\cdots\leq\beta_{k-1}$, allowing zeroes.
     \item the collection of numbers $\{\Tilde{\alpha}_i=\alpha_i+i,\Tilde{\beta}_j=\beta_j+j:0\leq i\leq k,0\leq j \leq k-1\}$ coincides with the set $\{0,1,\cdots,2k\}$.
\end{enumerate}
\end{lemma}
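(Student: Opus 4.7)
The plan is to invoke the symbol description of Lusztig families in type $C_n$, as laid out in Section 13.2 of \cite{carter1993finite}. Recall that to a bi-partition $(\alpha,\beta)\in BP(n)$, after zero-padding so that $\alpha$ has $m+1$ components $\alpha_0\leq\cdots\leq\alpha_m$ and $\beta$ has $m$ components $\beta_0\leq\cdots\leq\beta_{m-1}$ (for sufficiently large $m$), we associate the symbol with top row $\tilde{\alpha}_i=\alpha_i+i$ and bottom row $\tilde{\beta}_j=\beta_j+j$. Two bi-partitions belong to the same Lusztig family if and only if their symbols have the same multiset of entries, modulo the equivalence of simultaneously prepending a $0$ to both $\alpha$ and $\beta$ (which shifts every $\tilde{\alpha}_i$ and $\tilde{\beta}_j$ up by $1$ and adjoins a $0$ to each row).

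First I would compute the symbol of $(\lambda,\emptyset)$ with $\lambda=(k^{k+1})$. No padding of $\alpha$ is needed; we pad $\beta$ with $k$ zeros. Then $\tilde{\alpha}_i=k+i$ for $0\leq i\leq k$, giving the top row $\{k,k+1,\ldots,2k\}$, and $\tilde{\beta}_j=j$ for $0\leq j\leq k-1$, giving the bottom row $\{0,1,\ldots,k-1\}$. The combined multiset of entries is precisely $\{0,1,2,\ldots,2k\}$, consisting of $2k+1$ distinct values.

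Next I would argue that this multiset characterizes a unique minimal representative in the padding equivalence class. Since $0$ appears with multiplicity exactly $1$ in $\{0,1,\ldots,2k\}$, one cannot strip off a pair of leading zeros from the top and bottom rows — that is, any equivalent symbol obtained by further padding has $0$ occurring with multiplicity at least $2$. Hence the minimal representative in the class has exactly $k+1$ entries on top and $k$ entries on bottom, with entry multiset $\{0,1,\ldots,2k\}$. This establishes conditions (1), (2), (3) as necessary.

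Conversely, if $(\alpha,\beta)$ admits the decomposition described in (1)--(3), then after padding it already realizes the symbol with multiset $\{0,1,\ldots,2k\}$, so it lies in the same equivalence class as $(\lambda,\emptyset)$ and thus in the same Lusztig family. I do not anticipate a genuine obstacle: the only delicate point is verifying that the minimal representative indeed has the stated numerical sizes $(k+1,k)$, which follows immediately from the fact that $0$ appears with multiplicity $1$ in the invariant multiset. Once this is observed, the lemma reduces to unwinding the definition of the symbol.
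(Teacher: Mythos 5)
Your proposal is correct and is essentially the paper's proof: the paper simply cites Proposition 11.4.3 of Carter, which is exactly the symbol description of Lusztig families for $W(C_n)$ that you unwind. Your explicit computation of the symbol of $(\lambda,\emptyset)$ and the observation that the multiplicity-one occurrence of $0$ pins down the minimal representative of size $(k+1,k)$ correctly fills in the details behind that citation.
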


\begin{proof}
    This is a special case of Proposition 11.4.3 of \cite{carter1993finite}.
\end{proof}

Therefore, we only need to prove the following purely combinatorial statement:

\begin{proposition}
    \label{positiveRL_C}
    If $(\alpha,\beta)\in BP(n)$ satisfies the properties listed in Lemma \ref{family_C}, then $N_{\alpha\beta^t}^\lambda>0$, where $\lambda=(k^{k+1})$.
\end{proposition}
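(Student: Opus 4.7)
The plan is to identify, combinatorially, what the symbol condition of Lemma \ref{family_C} forces on the pair $(\alpha,\beta)$, and then appeal to a classical rectangle rule for Littlewood--Richardson coefficients. Concretely, I would establish: (i) $\alpha \subseteq \lambda$; (ii) $\beta^t$ is the \emph{complement} $\alpha^{\vee}$ of $\alpha$ inside the rectangle $\lambda$, defined by $(\alpha^{\vee})_m = k - \alpha_{m-1}$ for $m = 1, \ldots, k+1$ (where $\alpha_\bullet$ is read as in Lemma \ref{family_C}); (iii) $N_{\mu,\mu^{\vee}}^{\lambda} = 1$ for any $\mu \subseteq \lambda$.

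Claim (i) is immediate from condition 3: since $\tilde{\alpha}_k = \alpha_k + k \leq 2k$, we have $\alpha_k \leq k$, so $\alpha$ has $k+1$ parts each at most $k$ and fits inside the $(k+1) \times k$ rectangle $\lambda$.

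The heart of the argument is (ii). I would encode the symbol as a bead sequence of length $2k+1$, placing a bead at position $t$ when $t \in \{\tilde{\alpha}_i\}$ and a gap when $t \in \{\tilde{\beta}_j\}$, producing $k+1$ beads $I_0 < \cdots < I_k$ and $k$ gaps $J_0 < \cdots < J_{k-1}$. Then
\[
\alpha_i = I_i - i = \#\{j : J_j < I_i\}, \qquad \beta_j = J_j - j = \#\{i : I_i < J_j\}
\]
are immediate from counting positions below. Hence for $m = 1, \ldots, k+1$,
\[
(\beta^t)_m = \#\{j : \beta_j \geq m\} = \#\{j : J_j > I_{m-1}\} = k - \alpha_{m-1} = (\alpha^{\vee})_m,
\]
where the middle equality says $\beta_j \geq m$ iff the $j$-th gap lies above the $m$-th bead, and the last equality uses that there are $k$ gaps in total and exactly $\alpha_{m-1}$ of them lie below $I_{m-1}$. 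A routine translation between the non-decreasing sequence indexing of Lemma \ref{family_C} and the usual partition indexing then gives $\beta^t = \alpha^{\vee}$.

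For step (iii) I would invoke the classical rectangle rule: for $\lambda = (r^c)$ and any $\mu \subseteq \lambda$, one has $N_{\mu,\nu}^{\lambda} = 0$ unless $\nu = \mu^{\vee}$, in which case $N_{\mu,\mu^{\vee}}^{\lambda} = 1$. If a self-contained argument is preferred to citation, I would exhibit the unique LR tableau of shape $\lambda/\mu$ with content $\mu^{\vee}$ directly: put in cell $(i,j)$ the entry $i - f(j) + 1$, where $f(j) = \min\{i' : \mu_{i'} < j\}$ is the top row meeting column $j$, and verify by inspection that rows are weakly increasing, columns strictly increasing, and the reverse reading word is a lattice word (each column contributes $1, 2, 3, \ldots$ from top to bottom, so the partial counts automatically satisfy the lattice condition). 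Combining (i)--(iii) yields $N_{\alpha, \beta^t}^{\lambda} = N_{\alpha, \alpha^{\vee}}^{\lambda} = 1 > 0$.

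The main obstacle is the bookkeeping in step (ii): reconciling the non-decreasing indexing $\alpha_0 \leq \cdots \leq \alpha_k$ used in the symbol description with the partition convention needed to define $\alpha^{\vee}$. The bead/gap picture is precisely the device that makes this translation transparent, after which both the rectangle containment and the complement identity fall out by a one-line count.
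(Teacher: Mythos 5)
Your argument is correct, and it takes a genuinely different route from the paper. The paper proves the proposition by directly constructing a strict $\beta^t$-expansion of $\alpha$ into $\lambda$: it peels off the first row of $\beta^t$, adds the corresponding boxes to $\alpha$, checks by a short case analysis that the symbol condition of Lemma \ref{family_C} is preserved at each step, and iterates until $\beta$ is exhausted, at which point $\alpha$ is forced to equal $(k^{k+1})$. You instead prove a structural identity once and for all: condition 3 of Lemma \ref{family_C}, read through the bead/gap encoding, says exactly that $\alpha\subseteq\lambda$ and $\beta^t=\alpha^{\vee}$ is the complement of $\alpha$ in the rectangle $(k^{k+1})$, after which the classical rectangle rule $N_{\mu,\mu^{\vee}}^{(r^c)}=1$ finishes the proof. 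Your bookkeeping in step (ii) checks out ($\alpha_i=I_i-i$ counts gaps below the $i$-th bead, and $(\beta^t)_m=\#\{j:J_j>I_{m-1}\}=k-\alpha_{m-1}$), and the rectangle rule is standard. What your approach buys: it explains \emph{why} the family condition is equivalent to the complement relation in the rectangle, and it yields $N_{\alpha\beta^t}^{\lambda}=1$ immediately — the multiplicity-one statement that the paper only asserts in the remark following its proof and leaves to the reader. What the paper's approach buys: it is self-contained (no appeal to the rectangle rule) and the explicit expansion it builds is the unique LR filling, which is what the subsequent remark refers to. Both arguments are sound.
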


\begin{proof}
In order to show the Littlewood-Richardson coefficient $N_{\alpha\beta^t}^\lambda>0$, it suffices to construct a \textbf{strict} $\beta^t$-expansion from $\alpha$ to obtain $\lambda$. Suppose we start from $\alpha^0=(\alpha_0^0\leq\cdots\leq\alpha_k^0)$ and $\beta^0=(\beta_0^0\leq\cdots\leq\beta_{k-1}^0)$, with $(\alpha^0,\beta^0) \in BP(n)$ satisfying the properties listed in Lemma \ref{family_C}. Then $A^0=\{\alpha_i^0+i
\}$ and $B^0=\{\beta_j^0+j\}$ form a partition of the set $\{0,1,\cdots,2k\}$.

Now let $\alpha^1=(\alpha_0^1\leq\cdots\leq\alpha_k^1)$, where $\alpha_i^1=\alpha_{i+1}^0$ ,where we take convention $\alpha_{k+1}^0=k$. We then let $\beta^1=(\beta_0^1\leq\cdots\leq\beta_{k-1}^1)$, where $\beta_j^1=\text{max}(\beta_j^0-1,0)$ for all $j$. Note these operations corresponds to deleting the first row of the Young diagram of $\beta^t$ and adding new boxes to the Young diagram of $\alpha$ in such a way that we add at most one block at each column. We must check $A^1=\{\alpha_i^1+i
\}$ and $B^1=\{\beta_j^1+j\}$ form a partition of the set $\{0,1,\cdots,2k\}$. Clearly by construction we see all elements in both collections lie in $\{0,1,\cdots,2k\}$, and there are precisely $2k+1$ numbers in total. So it suffices to show there are no repetitions. Clearly there are no repetitions in both collections $A^1$ and $B^1$. Now suppose $\alpha_i^1+i=\beta_j^1+j$, there are several possibilities.
\begin{enumerate}
    \item If $i=k$, then $\beta_j^1+j=2k$. This forces $\beta_{k-1}^1+(k-1)\ge 2k$, which is impossible.
    \item If $\beta_j^0=0$, then $\beta_j^1=0$. It follows that $\alpha_i^1+i=\alpha_{i+1}^0+i=j \implies \alpha_{i+1}^0+(i+1)=j+1$. We thus must have $\beta_{j+1}^0>0$, since otherwise we get $\alpha_{i+1}^0+(i+1)=\beta_{j+1}^0+j+1$, contradiction. 
    However, note that $\beta_l^0+l=l$ for each $l\leq j$, and $\beta_{j+1}^0+j+1\ge j+2$. We know the minimal element in $A^0$ is $\alpha_0^0+0=j+1$. Therefore $\alpha_0^0+0=j+1=\alpha_{i+1}^0+(i+1)$, contradiction.
    \item Otherwise, we have $\alpha_i^1=\alpha_{i+1}^0$ and $\beta_j^1=\beta_j^0-1$, for $i<k$. Therefore $\alpha_{i+1}^0+i=\beta_j^0+j-1$, which is equivalent to $\alpha_{i+1}^0+(i+1)=\beta_j^0+j$, contradiction.
\end{enumerate}

Therefore, $(\alpha^1,\beta^1)$ is another bi-partition satisfying the property 3 listed in Lemma \ref{family_C}. This immediately implies that $(\alpha^1,\beta^1)$ is a bi-partition of $(1+...+2k)-(1+...+k)-(1+...+k-1)=k(k+1)=n$ satisfying the properties in Lemma 6.5 i.e. the number of blocks we deleted from $\beta^0$ is equal to the number of blocks we added to $\alpha^0$. We can thus perform the above step iteratively to get 

\[
(\alpha^0,\beta^0) \xrightarrow{1} (\alpha^1,\beta^1) \xrightarrow{2} \cdots \xrightarrow{m} (\alpha^m,\beta^m).
\] 

where the number on each arrow represents the number we put in each block when we added them. For example, we put a number $x$ in the blocks which we added from $\alpha^{x-1}$ to $\alpha^{x}$.

Eventually we must have $\beta^m=(0^k)$ and thus $\alpha^m=(k^{k+1})$ is by force. It only remains to check this procedure indeed gives a gives a strict expansion of $\alpha=\alpha^0$. But this is obvious since the number of blocks indexed by $x$ in row $(r-1)$ is precisely the number of blocks indexed by $(x+1)$ in row $r$, and the first row only admits number $1$. This concludes the proof.
\end{proof}

\begin{remark}
Although we don't need the following fact, it can be appreciated from the above proof that each character in the family corresponding to $\mu$ appears exactly once in $\sigma_{(\lambda,\emptyset)}\otimes \bigwedge(V)$. Indeed, it's equivalent to show there is only one strict $\beta^t$-expansion from $\alpha$ to get $\lambda$. However, the construction above is the most \say{efficient} way to fill in the boxes but still just satisfies the property of being strict in a \say{minimal} sense. We leave the details to the reader.
\end{remark} 

From the discussion above, we conclude the proof of Theorem \ref{getallfamily} for type C.

\subsection{Proof for Type D}
The proof of Theorem \ref{getallfamily} for type D follows the idea of the proof in type C. Let $n=k^2$ and $\lambda=(k^k)$. As in the case of type C, it suffices to show that the $W$-types in $\sigma_{(\lambda,\emptyset)}\otimes \bigwedge^\bullet V$ exhaust the family containing $(\lambda,\emptyset)$. We have the following lemma:

\begin{lemma}
\label{family_D}

Let $n=k^2$ and $\lambda=(k^k)$. A representation of $W(D_n)$ lies in the same family as $(\lambda,\emptyset)$ must correspond to $(\alpha,\beta)\in BP(n)$ where $\alpha\neq\beta$. Moreover, $\sigma_{(\alpha,\beta)}$ lies in the same family as $\sigma_{(\lambda,\emptyset)}$ if and only if the following holds:

\begin{enumerate}
    \item $\alpha$ has $k$ components $\alpha_0\leq\alpha_1\leq\cdots\leq\alpha_{k-1}$, allowing zeroes.
     \item $\beta$ has $k$ components $\beta_0\leq\beta_1\leq\cdots\leq\beta_{k-1}$, allowing zeroes.
     \item the collection of numbers $\{\Tilde{\alpha}_i=\alpha_i+i,\Tilde{\beta}_j=\beta_j+j:0\leq i\leq k-1,0\leq j \leq k-1\}$ coincides with the set $\{0,1,\cdots,2k-1\}$.
\end{enumerate}
\end{lemma}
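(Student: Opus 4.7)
The plan is to mirror the proof of Lemma \ref{family_C} for type C, replacing the defect-one symbol machinery used there by the defect-zero symbol machinery appropriate for type D. The key input is Lusztig's combinatorial description of families for classical types via symbols (see Chapter 11 of \cite{carter1993finite}): to a bipartition $(\alpha,\beta)\in BP(n)$ with $\alpha,\beta$ padded to $k$ parts each and written in weakly increasing order, we attach the defect-zero symbol whose two rows are $\{\tilde{\alpha}_i=\alpha_i+i\}$ and $\{\tilde{\beta}_j=\beta_j+j\}$, both strictly increasing. Two such bipartitions belong to the same Lusztig family of $W(D_n)$ if and only if the multiset union $\{\tilde{\alpha}_i\}\cup\{\tilde{\beta}_j\}$ agrees, where the two rows are considered interchangeable (reflecting the type D equivalence).

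First I would compute the symbol of the reference bipartition $(\lambda,\emptyset)=((k^k),\emptyset)$. Writing $\alpha=(k,k,\ldots,k)$ in $k$ parts and padding $\beta$ to $(0,0,\ldots,0)$ also with $k$ parts, one gets $\tilde{\alpha}_i=k+i$ for $0\le i\le k-1$ and $\tilde{\beta}_j=j$ for $0\le j\le k-1$, so that the combined multiset is exactly $\{0,1,\ldots,2k-1\}$, each element with multiplicity one. By the symbol characterization of families, another bipartition $(\alpha,\beta)$ lies in the same family as $(\lambda,\emptyset)$ precisely when its associated multiset equals $\{0,1,\ldots,2k-1\}$, which is exactly condition 3 of the lemma (conditions 1 and 2 being merely the padding conventions).

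For the \emph{moreover} clause, I would argue by contradiction: if $\alpha=\beta$, then $\tilde{\alpha}_i=\tilde{\beta}_i$ for every $i$, so every value appearing in the multiset $\{\tilde{\alpha}_i\}\cup\{\tilde{\beta}_j\}$ occurs with even multiplicity. This is incompatible with the multiset being $\{0,1,\ldots,2k-1\}$, all of whose elements are distinct. Hence the family in question contains no bipartitions with $\alpha=\beta$, which in particular means the pathological type D phenomenon of a split representation $\sigma'_{(\alpha,\alpha)},\sigma''_{(\alpha,\alpha)}$ (and a very even symbol with every entry doubled) does not occur in this family. There is no substantial obstacle here — essentially all the work is a translation into the symbol formalism of \cite{carter1993finite}, and the main check is the book-keeping with the increasing-order padding convention. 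Once the lemma is in place, the type D analog of Proposition \ref{positiveRL_C} can be attacked by the same iterative strict $\beta^t$-expansion procedure used in type C, with parameters adjusted to $\lambda=(k^k)$ and both rows of length $k$.
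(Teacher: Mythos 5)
Your proposal is correct and follows the same route as the paper, which simply cites Proposition 11.4.4 of Carter's book (the symbol characterization of families for $W(D_n)$); you have merely unpacked that citation by computing the defect-zero symbol of $((k^k),\emptyset)$ and checking that its entry multiset is $\{0,1,\ldots,2k-1\}$. Your multiplicity argument ruling out $\alpha=\beta$ (degenerate symbols) is also the right way to see the first assertion.
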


\begin{proof}
    This is a special case of Proposition 11.4.4 of \cite{carter1993finite}.
\end{proof}

The same argument as in Proposition \ref{positiveRL_C} gives us a similar combinatorial statement:

\begin{proposition}
    \label{positiveRL_D}
    If $(\alpha,\beta)\in BP(n)$ satisfies the properties listed in Lemma \ref{family_D}, then $N_{\alpha\beta^t}^\lambda>0$, where $\lambda=(k^k)$.
\end{proposition}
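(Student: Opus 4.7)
The plan is to mirror the iterative construction used in Proposition~\ref{positiveRL_C}, adjusted for the type D index ranges. Starting from $(\alpha^0,\beta^0):=(\alpha,\beta)$, at each step $r$ I pass from $(\alpha^r,\beta^r)$ to $(\alpha^{r+1},\beta^{r+1})$ by setting $\alpha^{r+1}_i=\alpha^{r}_{i+1}$ for $0\le i\le k-2$ and $\alpha^{r+1}_{k-1}=k$ (using the convention $\alpha^{r}_{k}=k$), and $\beta^{r+1}_j=\max(\beta^{r}_j-1,\,0)$. Geometrically, this deletes the first row of $(\beta^{r})^{t}$ and distributes its boxes into new positions of $\alpha^{r}$, at most one per column.

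The key invariant is that $(\alpha^{r},\beta^{r})$ continues to satisfy conditions (1)--(3) of Lemma~\ref{family_D}; equivalently, that $A^{r}\sqcup B^{r}:=\{\widetilde{\alpha}_i^{\,r}\}\sqcup\{\widetilde{\beta}_j^{\,r}\}$ equals $\{0,1,\dots,2k-1\}$. Since cardinalities and containment in $\{0,\dots,2k-1\}$ are immediate, I need only rule out collisions $\widetilde{\alpha}_i^{\,r+1}=\widetilde{\beta}_j^{\,r+1}$. Three cases arise, exactly paralleling type C:
\begin{enumerate}
    \item If $i=k-1$, then $\widetilde{\alpha}_{k-1}^{\,r+1}=2k-1$, but $\widetilde{\beta}_{k-1}^{\,r}\le 2k-1$ forces $\beta_{k-1}^{\,r}\le k$, whence $\widetilde{\beta}_j^{\,r+1}\le 2k-2$, a contradiction.
    \item If $\beta_j^{\,r}=0$, monotonicity gives $\beta_l^{\,r}=0$ for all $l\le j$, so $\{0,\dots,j\}\subseteq B^{r}$; the invariant at stage $r$ then forces $\widetilde{\alpha}_0^{\,r}=j+1$, and the equation reduces to $\widetilde{\alpha}_{i+1}^{\,r}=j+1=\widetilde{\alpha}_0^{\,r}$, contradicting strict monotonicity of $\widetilde{\alpha}^{\,r}$.
    \item Otherwise the equation collapses to $\widetilde{\alpha}_{i+1}^{\,r}=\widetilde{\beta}_j^{\,r}$, directly contradicting the invariant at stage $r$.
\end{enumerate}
The boundary subcase $j=k-1$ of (2) corresponds exactly to $\beta^{r}=(0^k)$ and $\alpha^{r}=(k^k)=\lambda$, i.e., the terminal configuration, and so does not arise as a genuine collision.

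Mass preservation $|\alpha^{r+1}|+|\beta^{r+1}|=|\alpha^{r}|+|\beta^{r}|=n$ is automatic from the invariant, so iteration terminates at some $(\alpha^m,\beta^m)=((k^k),(0^k))$. Labelling each box added during step $r\to r+1$ by $r+1$ produces a filling of the skew shape $\lambda/\alpha$ with content $(\beta^{0})^{t}$, and its strictness follows as in type C: any box labelled $r$ in row $\rho$ lies directly above a box labelled $r+1$ in row $\rho+1$, and row $1$ contains only the label $1$. This exhibits a valid strict $\beta^{t}$-expansion of $\alpha$ into $\lambda$, so $N_{\alpha\,\beta^{t}}^{\lambda}>0$.

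The main obstacle is essentially bookkeeping: I must check that the case analysis still goes through after shrinking the $\alpha$-index range from $k+1$ to $k$ entries and the total range from $\{0,\dots,2k\}$ to $\{0,\dots,2k-1\}$. The only genuine subtlety is that the boundary subcase $j=k-1$ of case (2) must be identified with the terminal configuration rather than treated as a collision; apart from this, the argument is a line-by-line translation of the type C proof.
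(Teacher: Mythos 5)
Your proposal is correct and is precisely the argument the paper intends: the paper's own "proof" of Proposition \ref{positiveRL_D} consists of the single assertion that the type C argument of Proposition \ref{positiveRL_C} carries over, and your write-up is a faithful, detailed verification of that adaptation (with the index ranges shifted from $k+1$ to $k$ and from $\{0,\dots,2k\}$ to $\{0,\dots,2k-1\}$). The only cosmetic difference is that your case (2) resolves the collision directly via $j+1\in A^r$ and strict monotonicity of $\widetilde{\alpha}^r$, slightly streamlining the paper's detour through showing $\beta^0_{j+1}>0$.
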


Therefore, we know if $(\alpha,\beta)$ lies in the same family as $(\lambda,\emptyset)$, then $\langle\sigma_{(\alpha,\beta)},\sigma_{(\lambda,\emptyset)}\otimes \bigwedge^{\bullet}V\rangle_{W(C_n)}>0$ by Lemma \ref{C_tensorwithwedge}. Hence $\langle\sigma_{(\alpha,\beta)},\sigma_{(\lambda,\emptyset)}\otimes \bigwedge^\bullet V\rangle_{W(D_n)}>0$ follows. This concludes the proof of Theorem \ref{getallfamily} for type D.

\section{Relations Between the Generalised Saxl Conjecture for Different Types}

In this section, we investigate the implications of the regular Saxl conjecture (type A) on the generalised Saxl conjecture for types C and D. Notably, we also relate these conjectures to several versions of some classical \say{decomposition of tensor product} problems in $S_n$.

\begin{lemma}
Assume Saxl's conjecture for type A. Let $n=k(k+1)$ and $W$ be the Weyl group of $\mathfrak{sp}(2n)$.Then $\sigma_{(\lambda,\lambda)}\otimes\sigma_{(\lambda,\lambda)}$
contains all the irreducible characters of the form $(\nu,\emptyset)$ and $(\emptyset,\nu)$, where $\lambda=(k,k-1,\cdots,1)$.
\end{lemma}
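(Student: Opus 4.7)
The approach is to pass from the hyperoctahedral group $W(C_n) = (\Z/2\Z)^n \rtimes S_n$ to its $S_n$ quotient, exploit Saxl's conjecture on the latter, and then pull back. Write $n=2m$ with $m=k(k+1)/2$. Since $\sigma_{(\nu,\emptyset)}$ is the inflation of the $S_n$-irreducible $\sigma_\nu$ through the quotient $W(C_n)\twoheadrightarrow S_n$, the standard adjunction between inflation and $(\Z/2\Z)^n$-invariants yields
\[
\big\langle \sigma_{(\nu,\emptyset)},\, \sigma_{(\lambda,\lambda)}^{\otimes 2}\big\rangle_{W(C_n)} = \big\langle \sigma_\nu,\, \big(\sigma_{(\lambda,\lambda)}^{\otimes 2}\big)^{(\Z/2\Z)^n}\big\rangle_{S_n},
\]
so the task reduces to understanding the $(\Z/2\Z)^n$-invariants of the tensor square as an $S_n$-module and showing every $\sigma_\nu$ occurs.

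The key computation is to describe this invariant subspace explicitly. Using the factorisation $\sigma_{(\lambda,\lambda)} = \Ind_{W(C_m)\times W(C_m)}^{W(C_n)}\!\big(\sigma_{(\lambda,\emptyset)} \boxtimes \sigma_{(\emptyset,\lambda)}\big)$ together with a Mackey-type restriction to $(\Z/2\Z)^n$, I would show that as a $(\Z/2\Z)^n$-module one has $\sigma_{(\lambda,\lambda)} = \bigoplus_{|T|=m} V_T$, where $V_T$ denotes the isotypic component for the character $\chi_T$ of $(\Z/2\Z)^n$ attached to a subset $T\subseteq\{1,\ldots,n\}$; each $V_T$ has dimension $(\dim\sigma_\lambda)^2$, and the stabilizer $S_m\times S_m$ of $T_0=\{m+1,\ldots,n\}$ acts on $V_{T_0}$ as $\sigma_\lambda\boxtimes\sigma_\lambda$. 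Since $(\Z/2\Z)^n$ acts on $V_T\otimes V_{T'}$ via $\chi_{T\triangle T'}$, only the diagonal terms $T=T'$ contribute to the invariants; reassembling the $S_n$-action by permutation of subsets then gives
\[
\big(\sigma_{(\lambda,\lambda)}^{\otimes 2}\big)^{(\Z/2\Z)^n} = \Ind_{S_m\times S_m}^{S_n}\!\big((\sigma_\lambda\otimes\sigma_\lambda) \boxtimes (\sigma_\lambda\otimes\sigma_\lambda)\big).
\]

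Frobenius reciprocity combined with the Littlewood--Richardson expansion $\sigma_\nu|_{S_m\times S_m}=\bigoplus_{\mu,\mu'\vdash m} c^\nu_{\mu\mu'}\,\sigma_\mu\boxtimes\sigma_{\mu'}$ then produces
\[
\big\langle \sigma_\nu, \big(\sigma_{(\lambda,\lambda)}^{\otimes 2}\big)^{(\Z/2\Z)^n}\big\rangle_{S_n} = \sum_{\mu,\mu'\vdash m} c^\nu_{\mu\mu'}\, \big\langle \sigma_\mu,\sigma_\lambda^{\otimes 2}\big\rangle_{S_m}\,\big\langle\sigma_{\mu'},\sigma_\lambda^{\otimes 2}\big\rangle_{S_m}.
\]
Under Saxl's conjecture for $S_m$, each of the two $S_m$-level brackets is at least $1$, and since $\sigma_\nu|_{S_m\times S_m}$ is a nonzero representation of positive dimension $\dim\sigma_\nu$, at least one $c^\nu_{\mu\mu'}$ must be strictly positive; hence the sum is strictly positive and $\sigma_{(\nu,\emptyset)}$ occurs in $\sigma_{(\lambda,\lambda)}^{\otimes 2}$. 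For $\sigma_{(\emptyset,\nu)}$, I would use that tensoring by $\sgn$ sends $\sigma_{(\alpha,\beta)}$ to $\sigma_{(\beta^t,\alpha^t)}$: since $\lambda=\lambda^t$, the character $\sigma_{(\lambda,\lambda)}$ and therefore its tensor square are $\sgn$-invariant, while $\sigma_{(\nu,\emptyset)}\otimes\sgn = \sigma_{(\emptyset,\nu^t)}$; as $\nu$ varies over $P(n)$ and transposition is a bijection, this recovers every $\sigma_{(\emptyset,\nu)}$. The main technical point is the wreath-product decomposition in the second paragraph; the rest is a routine assembly.
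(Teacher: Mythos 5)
Your proposal is correct and follows essentially the same route as the paper: both rest on the induced-module description $\sigma_{(\lambda,\lambda)}=\Ind_{S_m\times S_m\times(\Z/2\Z)^n}^{W}\big(\sigma_\lambda\boxtimes\sigma_\lambda\boxtimes(\triv)^m\boxtimes(\sgn)^m\big)$ with $m=n/2$, reduce the problem to the module $\Ind_{S_m\times S_m}^{S_n}\big((\sigma_\lambda\otimes\sigma_\lambda)\boxtimes(\sigma_\lambda\otimes\sigma_\lambda)\big)$, apply Saxl's conjecture for $S_m$ twice together with Littlewood--Richardson positivity, and finish with the $\sgn$-twist to recover the characters $(\emptyset,\nu)$. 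The only difference is cosmetic: you compute the $(\Z/2\Z)^n$-invariants of the tensor square exactly via the $\chi_T$-isotypic decomposition, whereas the paper obtains the same module merely as a summand via the projection formula and the identity term of the Mackey restriction.
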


\begin{proof}
As in Section 3.7 of \cite{ciubotaru2012spin}, note that $$\sigma_{(\lambda,\lambda)}=\Ind_{S_{n/2} \times S_{n/2} \times (\Z/2\Z)^n}^{W} (\sigma_\lambda \boxtimes \sigma_\lambda \boxtimes (\triv)^{n/2} \boxtimes (\sgn)^{n/2})$$ where $\sigma_\lambda$ represents the staircase representation of $S_{n/2}$. Therefore $\sigma_{(\lambda,\lambda)}\otimes \sigma_{(\lambda,\lambda)}$ equals:
\[\Ind_{S_{n/2} \times S_{n/2} \times (\Z/2\Z)^n}^{W} \{(\sigma_\lambda \boxtimes \sigma_\lambda \boxtimes (\triv)^{n/2} \boxtimes (\sgn)^{n/2}) \otimes(\Res\,\Ind(\sigma_\lambda \boxtimes \sigma_\lambda \boxtimes (\triv)^{n/2} \boxtimes (\sgn)^{n/2}))\}
\]

In particular, all constituents of $\sigma_\lambda \otimes \sigma_\lambda$ also appear in 
\begin{align*}
&\Ind_{S_{n/2} \times S_{n/2} \times (\Z/2\Z)^n}^{W} \{(\sigma_\lambda \boxtimes \sigma_\lambda \boxtimes (\triv)^{n/2} \boxtimes (\sgn)^{n/2}) \otimes (\sigma_\lambda \boxtimes \sigma_\lambda \boxtimes (\triv)^{n/2} \boxtimes (\sgn)^{n/2})\} \\
&=\Ind_{S_{n/2} \times S_{n/2} \times (\Z/2\Z)^n}^{W} \{(\sigma_\lambda\otimes\sigma_\lambda)\boxtimes(\sigma_\lambda\otimes\sigma_\lambda)\boxtimes (\triv)^n\}
\end{align*}

Assuming Saxl's conjecture for type $A$, and looking into the representations of $S_n$ which are factored through, we get all constituents of $\Ind_{S_{n/2}\times S_{n/2}}\sigma_\alpha \boxtimes \sigma_\beta$, for any $\alpha,\beta \in P(n/2)$. Hence we obtain all representations of $S_n$. Finally, noting that $\sigma_{(\lambda,\lambda)}=\sigma_{(\lambda,\lambda)}\otimes\sgn$, we see the set of constituents appearing in the tensor product must be stable under tensoring with $\sgn$. This concludes the proof.
\end{proof}

\begin{remark}
\label{nosaxlconstituents}
Without assuming Saxl's conjecture, we still can find large numbers of irreducible characters of $W$ of the form $(\nu,\emptyset)$, by combining works of Bessenrodt \cite{BessenrodtSaxlConjecture} and considering appropriate Littlewood-Richardson coefficients.
\end{remark}

By a very similar argument, we can prove the following:

\begin{lemma}
Assume Saxl's conjecture for type A. Let $n=k^2$ and $W$ be the Weyl group of $\mathfrak{so}(2n)$.Then $\sigma_{(\lambda,\xi)}\otimes\sigma_{(\lambda,\xi)}$
contains all the irreducible characters of the form $(\nu,\emptyset)$ and $(\emptyset,\nu)$, where $\lambda=(k,k-1,\cdots,1)$ and $\xi=(k-1,\cdots,1)$.
\end{lemma}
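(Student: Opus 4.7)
The plan is to adapt the previous lemma's argument by first lifting the question to $W(C_n)$ and then restricting back to $W(D_n)$. Since $|\lambda|=k(k+1)/2\neq k(k-1)/2=|\xi|$, we have $\lambda\neq\xi$, so $\sigma_{(\lambda,\xi)}^{W(D_n)}$ is simply the restriction of $\sigma_{(\lambda,\xi)}^{W(C_n)}$, and the irreducibles $\sigma_{(\nu,\emptyset)}^{W(D_n)}$ and $\sigma_{(\emptyset,\nu)}^{W(D_n)}$ coincide. Using $\Res_{W(D_n)}^{W(C_n)}(V\otimes W)=\Res V\otimes\Res W$, together with Frobenius reciprocity and the fact that $\Ind_{W(D_n)}^{W(C_n)}\sigma_{(\nu,\emptyset)}^{W(D_n)}=\sigma_{(\nu,\emptyset)}^{W(C_n)}\oplus\sigma_{(\emptyset,\nu)}^{W(C_n)}$ when $\nu\neq\emptyset$, it suffices to show that $\sigma_{(\nu,\emptyset)}^{W(C_n)}$ appears in $\sigma_{(\lambda,\xi)}^{W(C_n)}\otimes\sigma_{(\lambda,\xi)}^{W(C_n)}$ for every $\nu\vdash n$.

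Following Section 3.7 of \cite{ciubotaru2012spin}, write
$$\sigma_{(\lambda,\xi)}=\Ind_{S_{|\lambda|}\times S_{|\xi|}\times(\Z/2\Z)^n}^{W(C_n)}\bigl(\sigma_\lambda\boxtimes\sigma_\xi\boxtimes\triv^{|\lambda|}\boxtimes\sgn^{|\xi|}\bigr).$$
Expanding $\sigma_{(\lambda,\xi)}^{\otimes 2}$ via the projection formula exactly as in the previous lemma and keeping the identity double-coset contribution yields, as a summand,
$$\Ind_{S_{|\lambda|}\times S_{|\xi|}\times(\Z/2\Z)^n}^{W(C_n)}\bigl((\sigma_\lambda\otimes\sigma_\lambda)\boxtimes(\sigma_\xi\otimes\sigma_\xi)\boxtimes\triv^n\bigr).$$
Since both $\lambda=(k,k-1,\ldots,1)$ and $\xi=(k-1,\ldots,1)$ are staircase partitions of $|\lambda|$ and $|\xi|$ respectively, Saxl's conjecture applies to each. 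Hence for every $\alpha\vdash|\lambda|$ and $\beta\vdash|\xi|$, the module $\sigma_\alpha\boxtimes\sigma_\beta\boxtimes\triv^n$ is a subrepresentation of the inducing data above, so by induction in stages through $S_n\times(\Z/2\Z)^n$ and the Littlewood--Richardson rule, $\sum_\nu c_{\alpha\beta}^{\nu}\,\sigma_{(\nu,\emptyset)}^{W(C_n)}$ is a constituent of $\sigma_{(\lambda,\xi)}^{\otimes 2}$.

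It remains to show that for every $\nu\vdash n$ there exist $\alpha\vdash|\lambda|$ and $\beta\vdash|\xi|$ with $c_{\alpha\beta}^{\nu}>0$. Since $|\xi|<n$, we may remove $|\xi|$ outer corners from $\nu$ one at a time (each step preserving partition shape) to produce a subpartition $\alpha\subseteq\nu$ of size $|\lambda|$. The skew Schur function $s_{\nu/\alpha}=\sum_\beta c_{\alpha\beta}^{\nu}s_\beta$ is a nonzero Schur-positive expression, so some $c_{\alpha\beta}^{\nu}>0$. This produces $\sigma_{(\nu,\emptyset)}^{W(C_n)}$ as a constituent of $\sigma_{(\lambda,\xi)}^{\otimes 2}$ for every $\nu\vdash n$, and restricting to $W(D_n)$ yields every $\sigma_{(\nu,\emptyset)}^{W(D_n)}=\sigma_{(\emptyset,\nu)}^{W(D_n)}$. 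There is no serious obstacle: the proof is a direct adaptation of the type C lemma, the only new inputs being that Saxl's conjecture must be invoked twice (for the two staircases of sizes $k(k+1)/2$ and $k(k-1)/2$) and the elementary combinatorial fact that every partition of $n=k^2$ admits a Littlewood--Richardson decomposition into pieces of these two sizes.
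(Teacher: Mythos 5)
Your proof is correct and follows essentially the same route as the paper, which simply adapts the type $C$ argument (induced-representation description, projection formula, identity double coset, Saxl for the two staircases, Littlewood--Richardson); the paper handles the $(\emptyset,\nu)$ characters via $\sgn$-stability whereas you note they coincide with the $(\nu,\emptyset)$ characters upon restriction to $W(D_n)$, which amounts to the same thing. Your explicit reduction from $W(D_n)$ to $W(C_n)$ via Frobenius reciprocity and the final corner-removal argument for positivity of some $c_{\alpha\beta}^{\nu}$ are both sound.
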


The important observation is that $(\lambda,\lambda)$ (resp.$(\lambda,\xi)$) is the unique special character in the family corresponding to the self-dual solvable orbit. Therefore, the above two lemmas imply:

\begin{corollary}
\label{AimpliesCD}
Assume Saxl's conjecture for type A.
Let $\mathfrak{g}$ be a simple Lie algebra of type C or D with Weyl group $W$. Let $\O_F$ be the unique minimal solvable orbit of $\mathfrak{g}$, and suppose that $\O_F=d(\O_F)$.
Then $(\bigoplus_{\sigma\in F} \sigma)^{\otimes 2} $ contains all irreducible characters of $W$ inflated from $S_n \leq W$.
\end{corollary}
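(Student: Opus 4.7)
The plan is to extract the corollary directly from the two preceding lemmas by a short packaging argument. The paper has already observed that, in both type $C$ and type $D$, the bi-partition $(\lambda,\lambda)$ (resp.\ $(\lambda,\xi)$) appearing in the preceding lemma is the unique special character $\sigma_F$ in the Lusztig family $F$ attached to the self-dual minimal solvable orbit. Hence $\sigma_F$ is one of the summands of $\bigoplus_{\sigma\in F}\sigma$, and expanding
\[
\Big(\bigoplus_{\sigma \in F}\sigma\Big)^{\otimes 2} \;=\; \bigoplus_{\sigma,\sigma'\in F}\sigma\otimes \sigma'
\]
displays $\sigma_F\otimes\sigma_F$ as one of the summands. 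Any irreducible constituent of $\sigma_F\otimes\sigma_F$ is therefore a constituent of the whole tensor square, and the two preceding lemmas (under Saxl for type $A$) guarantee that every $\sigma_{(\nu,\emptyset)}$ and every $\sigma_{(\emptyset,\nu)}$, $\nu\in P(n)$, occurs as such a constituent.

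It remains to match the bipartitions of the form $(\nu,\emptyset)$ with the characters of $W$ inflated from $S_n$. For $W=W(C_n)=(\Z/2\Z)^n\rtimes S_n$, inflation from the quotient $S_n=W/(\Z/2\Z)^n$ picks out exactly those irreducibles on which the normal subgroup $(\Z/2\Z)^n$ acts trivially; by the induction-from-Young-subgroup construction recalled in Section~11.4 of \cite{carter1993finite}, these are precisely the $\sigma_{(\nu,\emptyset)}$, corresponding to the case where the second partition is empty. The analogous identification in type $D$ is obtained by restricting from type $C$ and invoking Proposition~11.4.4 of \cite{carter1993finite}: since $(\nu,\emptyset)\ne(\emptyset,\nu)$ whenever $\nu\ne\emptyset$, the restriction $\sigma_{(\nu,\emptyset)}|_{W(D_n)}$ is irreducible and coincides with the inflation of $\sigma_\nu$ along $W(D_n)\twoheadrightarrow W(D_n)/(\Z/2\Z)^{n-1}\cong S_n$. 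Combining these two steps yields the corollary.

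I do not expect any genuine obstacle: the hard input is Saxl for type $A$ together with the two preceding lemmas, and what remains is bookkeeping, namely (i) recognising $\sigma_F$ as a summand of $\bigoplus_{\sigma\in F}\sigma$, which is the paper's own observation right before the statement, and (ii) identifying inflated characters with bipartitions of the form $(\nu,\emptyset)$, which is immediate from the standard parameterisation of $\Irr(W)$. The only mildly delicate point is the type $D$ case, where one must keep track of the behaviour of $\sigma_{(\nu,\emptyset)}$ under restriction from $W(C_n)$ to $W(D_n)$, but this is already handled by the parameterisation recalled earlier in the paper.
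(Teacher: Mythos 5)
Your proposal is correct and follows essentially the same route as the paper: the paper's proof is exactly the observation that $\sigma_{(\lambda,\lambda)}$ (resp.\ $\sigma_{(\lambda,\xi)}$) is the unique special character of $F$, so that the two preceding lemmas apply to a summand of the tensor square. Your additional bookkeeping — identifying the inflated characters with the bipartitions $(\nu,\emptyset)$ in type $C$ and with their irreducible restrictions in type $D$ — is accurate and merely makes explicit what the paper leaves implicit.
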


On the other hand, we can explicitly calculate the irreducible characters of $W$ inflated from $S_n \leq W$ that occurs in $(\bigoplus_{\sigma\in F} \sigma)^{\otimes 2} $.

\begin{lemma}
\label{2rowyoung}
Let $W$ be Weyl group of type C or D, and $V$ is its reflection representation. Then all irreducible characters of $W$ occurring in $\bigwedge(V)\otimes\bigwedge(V)$
that inflate from $S_n \leq W$ are precisely those corresponding to Young diagrams of at most two rows.
\end{lemma}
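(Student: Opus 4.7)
The plan is a fixed-subspace analysis. Let $W \twoheadrightarrow S_n$ denote the natural surjection, whose kernel $K$ equals $(\Z/2\Z)^n$ in type C and the even-weight subgroup $(\Z/2\Z)^{n-1}$ in type D. A character of $W$ is inflated from $S_n$ precisely when $K$ acts trivially on it, so inflated characters are naturally parametrised by $\alpha\in P(n)$. By the adjunction between inflation along $W\twoheadrightarrow S_n$ and taking $K$-invariants, the multiplicity of an inflated $\sigma_\alpha$ in any $W$-representation $\rho$ equals the multiplicity of $\sigma_\alpha$ in the $K$-fixed subspace $\rho^K$, viewed as an $S_n$-module.

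Specialising to $\rho=\bigwedge(V)\otimes\bigwedge(V)\cong\bigwedge(V\oplus V)$, I would fix a basis $\{e_l,f_l\}_{l=1}^{n}$ of $V\oplus V$ on which each $\epsilon_l\in(\Z/2\Z)^n$ simultaneously sign-flips $e_l$ and $f_l$. A basis monomial $e_I\wedge f_J$ is acted on by $\epsilon_l$ via $(-1)^{[l\in I]+[l\in J]}$, so it is $(\Z/2\Z)^n$-fixed iff $I=J$. In type C the fixed subspace thus has basis $\{e_I\wedge f_I:I\subseteq\{1,\dots,n\}\}$, on which $S_n$ acts by permuting the subsets $I$ with no accompanying signs (the reordering signs from the $e$'s and the $f$'s cancel). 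Hence $\rho^K\cong\bigoplus_{k=0}^{n}\Ind_{S_k\times S_{n-k}}^{S_n}(\mathbf{1})$, and by Young's rule each summand decomposes as $\bigoplus_{i=0}^{\min(k,n-k)}\sigma_{(n-i,i)}$, so that every 2-row partition appears with positive multiplicity and no partition of length at least 3 does. This proves the lemma in type C.

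For type D the same reduction applies with $K=(\Z/2\Z)^{n-1}$, but the $K$-fixed subspace gains an extra ``Case 2'' summand spanned by monomials $e_I\wedge f_{\{1,\dots,n\}\setminus I}$. A parallel $S_n$-calculation identifies this piece as $\bigoplus_k\Ind_{S_k\times S_{n-k}}^{S_n}(\sgn\boxtimes\sgn)$, which by Young's rule contributes exactly the 2-column shapes $(2^i,1^{n-2i})$. The main obstacle I anticipate is reconciling this extra 2-column contribution with the ``at most two rows'' conclusion of the lemma: invoking Proposition 11.4.4 one has $\sigma_{(\alpha,\emptyset)}|_{W(D_n)}=\sigma_{(\emptyset,\alpha)}|_{W(D_n)}$, and combined with the sign-twist relation $\sigma_{(\alpha,\emptyset)}\otimes\sgn_{W(C_n)}=\sigma_{(\emptyset,\alpha^t)}$ this should show that on restriction to $W(D_n)$ the 2-row and 2-column shapes label the same set of inflated characters under the natural type-D parametrisation, completing the proof.
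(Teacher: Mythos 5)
Your type~C argument is correct and complete, and it takes a genuinely different route from the paper's: the paper deduces the lemma in one line from Lemma~\ref{C_tensorwithwedge}, since the multiplicity of $\sigma_{(\alpha,\emptyset)}$ in $\bigwedge^{\bullet}V\otimes\bigwedge^{\bullet}V$ equals $\sum_i N^{\alpha}_{(n-i),(i)}$ up to the factor $a_V$, and Pieri's rule makes this nonzero exactly for two-row $\alpha$. You instead compute the $K$-fixed subspace of $\bigwedge(V\oplus V)$ directly and apply Young's rule; this is more elementary and self-contained, and the sign-cancellation and invariance analysis are both right.

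The type~D half has a genuine gap, and in fact your own computation is telling you something real. The extra summand $\bigoplus_k\Ind_{S_k\times S_{n-k}}^{S_n}(\sgn\boxtimes\sgn)$ contributes the two-column shapes $(2^i,1^{n-2i})$ to $\rho^K$, and these \emph{cannot} be identified with two-row shapes: inflation along $W(D_n)\twoheadrightarrow S_n$ sends non-isomorphic irreducibles to non-isomorphic irreducibles, so $\text{Infl}\,\sigma_{(n-i,i)}$ and $\text{Infl}\,\sigma_{(2^i,1^{n-2i})}$ are distinct $W(D_n)$-characters whenever the partitions differ. The two relations you invoke only combine to give $\text{Infl}\,\sigma_\alpha\otimes\sgn_{D_n}=\text{Infl}\,\sigma_{\alpha^t}$, and since $\sgn_{D_n}=\text{Infl}\,\sgn_{S_n}$ is itself a nontrivial inflated character, this identifies nothing. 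Concretely, for $D_4$ the character $\text{Infl}\,\sigma_{(2,1,1)}$ (three rows) does occur in $\bigwedge(V)\otimes\bigwedge(V)$, consistent with $\langle\sigma_{(\emptyset,(2,1,1))},\bigwedge^{\bullet}V^{\otimes2}\rangle_{W(C_4)}=\langle\sigma_{((3,1),\emptyset)},\bigwedge^{\bullet}V^{\otimes2}\rangle_{W(C_4)}>0$. The honest conclusion of your calculation is that in type~D the inflated constituents are those labelled by partitions with at most two rows \emph{or} at most two columns; the paper's one-line derivation from Lemma~\ref{C_tensorwithwedge} really only covers type~C as written. Note that this does not affect the downstream application in Proposition~\ref{SntypesofCD}: there $\lambda=(k^k)$ is self-conjugate, so $\sigma_\lambda^{\otimes2}\otimes(\sigma_\mu\otimes\sgn)=\sigma_\lambda\otimes\sigma_{\lambda^t}\otimes\sigma_\mu=\sigma_\lambda^{\otimes2}\otimes\sigma_\mu$, and the two-column shapes contribute no new constituents.
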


\begin{proof}
This is a direct consequence of Lemma \ref{C_tensorwithwedge}.
\end{proof}

\begin{proposition}
\label{SntypesofCD}
Let $\mathfrak{g}$ be a simple Lie algebra of type C or D with Weyl group $W$. Let $\O_F$ be the unique minimal solvable orbit of $\mathfrak{g}$, and suppose that $\O_F=d(\O_F)$.
Then the $W$-types in $(\bigoplus_{\sigma\in F} \sigma)^{\otimes 2}$ inflated from $S_n \leq W$ are precisely those occurring in the following tensor product:

\[\sigma_\lambda\otimes\sigma_\lambda\otimes \Big(\bigoplus_{0\leq j \leq n/2}\sigma_{n-j, j}\Big)\]

where $\lambda=(k^{k+1})$ if $\mathfrak{g}=\mathfrak{sp}(2n),n=k(k+1)$; $\lambda=(k^k)$ if $\mathfrak{g}=\mathfrak{so}(2n),n=k^2$.
\end{proposition}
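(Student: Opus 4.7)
The plan is to invoke Theorem \ref{getallfamily} to trade the family sum for a concrete spinor-based representation, and then extract the inflated constituents using Lemma \ref{2rowyoung}.

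First, Theorem \ref{getallfamily} tells us that the set of $W$-constituents of $\bigoplus_{\Tsig\in\Psi^{-1}(\O_F)}(\Tsig\otimes \SF)$ is exactly the family $F$, so the set of $W$-constituents of $(\bigoplus_{\Tsig}(\Tsig\otimes \SF))^{\otimes 2}$ coincides with that of $(\bigoplus_{\sigma\in F}\sigma)^{\otimes 2}$. Using the explicit description $\Tsig=\sigma_{(\lambda,\emptyset)}\otimes S^{(\pm)}$ from Theorem \ref{Tw_parameterisation} together with $\SF\otimes\SF=a_V\bigwedge^\bullet V$ from Remark \ref{SFotimesSF}, the $W$-module $(\bigoplus_{\Tsig}(\Tsig\otimes\SF))^{\otimes 2}$ has the same set of $W$-constituents as $\sigma_{(\lambda,\emptyset)}^{\otimes 2}\otimes(\bigwedge^\bullet V)^{\otimes 2}$.

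The key observation for extracting inflated constituents is as follows: if $A_1$ is inflated from $S_n$ and $A_2$ is an arbitrary $W$-module, then the inflated $W$-constituents of $A_1\otimes A_2$ coincide with the inflated constituents of $A_1\otimes A_2^{\mathrm{infl}}$, where $A_2^{\mathrm{infl}}$ denotes the sum of the inflated constituents of $A_2$. The reason is that $(\Z/2\Z)^n$ acts trivially on $A_1$, so its action on $A_1\otimes A_2$ is inherited from $A_2$, and any inflated constituent forces trivial $(\Z/2\Z)^n$-action on the relevant isotypic component of $A_2$. By Lemma \ref{2rowyoung}, the inflated part of $(\bigwedge^\bullet V)^{\otimes 2}$ is precisely $\bigoplus_{0\leq j\leq n/2}\sigma_{((n-j,j),\emptyset)}$. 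Since the tensor product of inflated $W$-types is the inflation of the corresponding $S_n$-tensor product, the inflated constituents of $\sigma_{(\lambda,\emptyset)}^{\otimes 2}\otimes\bigoplus_{j}\sigma_{((n-j,j),\emptyset)}$ correspond bijectively to the $S_n$-constituents of $\sigma_\lambda^{\otimes 2}\otimes\bigoplus_{0\leq j\leq n/2}\sigma_{(n-j,j)}$, giving the proposition.

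The main obstacle is the bookkeeping in type D: the set $\Psi^{-1}(\O_F)$ may contain one or two elements depending on the parity of $n$ and whether the underlying partition is self-conjugate, and the $\Tw(D_n)$-representations arise from restriction of $\Tw(C_n)$-representations in a way that interacts with the $\pm$-splittings of $\sigma_{(\alpha,\alpha)}$. One must verify that in each such case the reduction in the first paragraph still yields the same set of $W$-constituents as $\sigma_{(\lambda,\emptyset)}^{\otimes 2}\otimes(\bigwedge^\bullet V)^{\otimes 2}$. Lemma \ref{2rowyoung} is already stated uniformly for types C and D, so once the spinor-side identification is settled the inflation argument applies verbatim in both types.
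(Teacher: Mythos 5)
Your argument is correct and is essentially the paper's own proof: the paper likewise combines Theorem \ref{getallfamily}, Lemma \ref{2rowyoung}, and the observation that tensoring an inflated character with a non-inflated one produces no inflated constituents. The type-D bookkeeping you flag is already settled in the paper's Section 5.2, where the identification of the constituents of $\bigoplus_{\Tsig\in\Psi^{-1}(\O_F)}\Tsig\otimes\SF$ with those of $\sigma_{(\lambda,\emptyset)}\otimes\bigwedge^\bullet V$ is carried out for both parities.
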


\begin{proof}
We observe the following fact. Suppose $\sigma$ is an irreducible character of $W$ inflated from $S_n \leq W$, while $\tau$ is not. Then none of the constituents of $\sigma\otimes \tau$ can factor through $S_n$. With this observation, the proposition follows immediately by combining Lemma \ref{2rowyoung} and Theorem \ref{getallfamily}.
\end{proof}

In particular, combining Corollary \ref{AimpliesCD} and Proposition \ref{SntypesofCD}, we get the following nice implication for the representation theory of symmetric groups.

\begin{corollary}
\label{tensorproddecompose}
   The following statements hold:
    \begin{enumerate}
        \item Let $n=k(k+1)$ and $\lambda=(k^{k+1})\in P(n)$. Assume either the actual Saxl conjecture or the generalised Saxl conjecture (Ver II) for type C. Then $\sigma_\lambda\otimes\sigma_\lambda\otimes (\bigoplus_{0\leq j \leq n/2}\sigma_{n-j, j})$ contains all irreducible representations of $S_n$.
        \item Let $n=k^2$ and $\lambda=(k^k)\in P(n)$. Assume either the actual Saxl Conjecture or the generalised Saxl conjecture (Ver II) for type D. Then $\sigma_\lambda\otimes\sigma_\lambda\otimes (\bigoplus_{0\leq j \leq n/2}\sigma_{n-j, j})$ contains all irreducible representations of $S_n$.
    \end{enumerate}
\end{corollary}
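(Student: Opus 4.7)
The plan is to combine Corollary \ref{AimpliesCD} with Proposition \ref{SntypesofCD}, and then to translate the resulting statement about $W$-types into a statement about $\Irr(S_n)$. The whole argument follows once the right bookkeeping is in place. First I would recall the standard bijection between $\Irr(S_n)$ and the set of irreducible characters of $W$ that are inflated from $S_n$ through the quotient $W \twoheadrightarrow S_n$ (using $W = (\mathbb{Z}/2\mathbb{Z})^n \rtimes S_n$ for type C, and the analogous quotient for type D). Under this correspondence, proving that a tensor product of $S_n$-representations contains every $\nu \in \Irr(S_n)$ is equivalent to proving that the inflation of this tensor product to $W$ contains every $W$-type inflated from $S_n$.

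Next I would handle each hypothesis separately. Assuming the generalised Saxl conjecture, Ver. II, for the relevant type, the tensor $\bigl(\bigoplus_{\sigma\in F}\sigma\bigr)^{\otimes 2}$ already contains every irreducible character of $W$, and so in particular it contains every inflated $W$-type. By Proposition \ref{SntypesofCD}, the inflated $W$-types appearing in $\bigl(\bigoplus_{\sigma\in F}\sigma\bigr)^{\otimes 2}$ coincide with the inflated $W$-types appearing in $\sigma_\lambda \otimes \sigma_\lambda \otimes \bigl(\bigoplus_{0 \leq j \leq n/2} \sigma_{n-j,j}\bigr)$; through the inflation bijection, the latter then contains every irreducible character of $S_n$. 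Assuming instead Saxl's conjecture for type A, Corollary \ref{AimpliesCD} is precisely the statement that $\bigl(\bigoplus_{\sigma\in F}\sigma\bigr)^{\otimes 2}$ contains every inflated $W$-type, and the same application of Proposition \ref{SntypesofCD} followed by the inflation bijection closes the argument.

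The only point that needs a brief check is that the partitions $\lambda = (k^{k+1})$ (type C) and $\lambda = (k^k)$ (type D) stated in the corollary are indeed the partitions of $n$ indexing the genuine $\Tw$-type $\sigma_{(\lambda,\emptyset)} \otimes \SF$ attached to the self-dual minimal solvable orbit, so that the $\sigma_\lambda$ appearing on the symmetric-group side via Proposition \ref{SntypesofCD} matches the $\sigma_\lambda$ of the corollary. This is exactly what was set up at the beginning of the proofs of Theorem \ref{getallfamily} in the two preceding subsections, so no additional work is required. The essence of the corollary is therefore that Proposition \ref{SntypesofCD} already reduces the generalised Saxl problem in types C and D (restricted to inflated $W$-types) to a symmetric-group tensor-decomposition problem, and that Corollary \ref{AimpliesCD} shows this reduced problem also follows from ordinary Saxl in type A; there is no genuine obstacle, only the translation between the two viewpoints.
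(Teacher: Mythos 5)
Your proposal is correct and follows exactly the route the paper intends: the corollary is stated in the paper as an immediate consequence of combining Corollary \ref{AimpliesCD} with Proposition \ref{SntypesofCD}, and your write-up simply makes explicit the inflation bookkeeping and the two-case treatment of the hypotheses that the paper leaves implicit. No gaps.
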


\begin{remark}
    Without assuming any version of the generalised Saxl conjecture, one can already identify many constituents of $\sigma_\lambda\otimes\sigma_\lambda\otimes (\bigoplus_{0\leq j \leq n/2}\sigma_{n-j, j})$ in both cases, namely those that can be identified by the method sketched in Remark \ref{nosaxlconstituents}. We also refer the reader to Section 5 of \cite{BessenrodtSaxlConjecture}, where Bessenrodt proposes a similar conjecture regarding the square tensor of a square partition of symmetric groups.
\end{remark}

\section{Generalised Saxl Conjecture for Non-Crystallographic Coxeter Groups}

\subsection{Formulation}

The notion of \say{Lusztig family} is in fact defined over all finite Coxeter groups, not only the crystallographic ones. In the case of non-crystallographic Coxeter groups, $\Irr(W)$ is partitioned into families using Lusztig's \say{a-function}, see Definition 6.5.7 of \cite{Geck_Pfeiffer_Characters}. This gives us hope to state a version of the generalised Saxl conjecture for all Coxeter groups. The difficulty here is that we lack the notion of nilpotent orbit, which stems from the associated Lie algebra structure of the Weyl group. However, we can take a slightly indirect approach as following.\\

Let $W$ be any finite Coxeter group, and $\Tw$ be its Pin cover as before. Recall that in Theorem \ref{generalisedSpringer} we introduced a central element $\Omega_\Tw$ of $\C[\Tw]$. Let $$\Spec(\Omega_\Tw) := \{\Tsig(\Omega_\Tw): \Tsig\in \Irrg(\Tw)\}$$ and note $\Spec(\Omega_\Tw)\subseteq \R_{>0}$ by Theorem 2.3 (1). This set has an ordering as in Remark \ref{lengthofh}. We also set $$\Min(\Omega_\Tw):=\{\Tsig\in \Irrg(\Tw): \Tsig(\Omega_\Tw) \text{ takes minimal value in } \Spec(\Omega_\Tw)\}$$ We in fact have:

\begin{lemma}
Suppose $W$ is a Weyl group associated with Lie algebra $\mathfrak{g}$. Let $\O$ be the minimal solvable orbit in $\mathfrak{g}$. We have $\Min(\Omega_\Tw)=\Psi^{-1}(\O)$.
\end{lemma}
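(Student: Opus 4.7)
The key observation is that, by Theorem~\ref{generalisedSpringer}(1), the scalar $\Tsig(\Omega_\Tw)=\langle h_e,h_e\rangle$ depends only on the orbit $\Psi(\Tsig)=G\cdot e$, not on the chosen $\Tsig$ itself. Consequently $\Min(\Omega_\Tw)$ is forced to be a union of complete $\Psi$-fibres, and the lemma reduces to the purely orbit-theoretic statement that $\O$ is the unique solvable orbit minimising the function $\O'\mapsto \langle h_{\O'},h_{\O'}\rangle$.

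The first containment $\Psi^{-1}(\O)\subseteq \Min(\Omega_\Tw)$ follows immediately from this reduction. Since $\O$ is the minimum of the closure ordering on $G\backslash\n_{\sol}$, every solvable orbit $\O'$ satisfies $\O\leq \O'$, so Remark~\ref{lengthofh} yields $\langle h_\O,h_\O\rangle\leq \langle h_{\O'},h_{\O'}\rangle$, exhibiting any $\Tsig\in \Psi^{-1}(\O)$ as an element of $\Min(\Omega_\Tw)$.

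The reverse inclusion is where the substance lies. Suppose $\Tsig'\in \Min(\Omega_\Tw)$ with $\Psi(\Tsig')=\O'$; by the previous paragraph $\O\leq \O'$, and the minimality of $\Tsig'(\Omega_\Tw)$ forces $\langle h_{\O'},h_{\O'}\rangle=\langle h_\O,h_\O\rangle$. One must then conclude $\O'=\O$. The main obstacle is that Remark~\ref{lengthofh} gives only a weak inequality, so collapsing the ``$\leq$'' to an equality does not immediately collapse $\O\leq\O'$ to $\O=\O'$. I would address this in two ways. First, I would invoke the strict form of Corollary~3.3 of \cite{gunnells2002characterization}: that argument expresses $\langle h_{\O'},h_{\O'}\rangle-\langle h_\O,h_\O\rangle$ as a sum of non-negative contributions along a chain of degenerations from $\O'$ to $\O$, which vanish simultaneously only when the chain is constant, upgrading Remark~\ref{lengthofh} to strict monotonicity along any strict chain in the closure ordering.

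Failing a clean uniform citation, I would finish by a type-by-type verification using the explicit list of minimal solvable orbits from Proposition~5.5 of \cite{CIUBOTARU20221}. In the classical types, $\langle h,h\rangle$ is an explicit quadratic in the Jordan block sizes of the parametrising partition, so comparing the value at the listed minimal orbit with the values at the solvable orbits that cover it in the closure ordering is a short combinatorial check; in the exceptional types it is a finite calculation on the tables of nilpotent orbits. This final step is the part I expect to require the most care, but in either approach the strict monotonicity needed is narrowly targeted (between $\O$ and its immediate successors among solvable orbits), so a full general strict-monotonicity theorem is not strictly necessary.
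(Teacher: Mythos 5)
Your argument is essentially the paper's: the paper's proof is a one-line appeal to the uniqueness of the minimal solvable orbit, Theorem \ref{generalisedSpringer}(1), and Remark \ref{lengthofh}. The strictness issue you isolate for the reverse inclusion is a real gap in the weak statement of Remark \ref{lengthofh}, but it is closed exactly as you propose by the strict form of the Gunnells--Sommers result (equality of $\langle h,h\rangle$ for comparable orbits forces equality of Dynkin elements, hence of orbits), so no type-by-type check is needed.
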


\begin{proof}
This follows immediately from the fact that minimal solvable orbit is unique, Theorem \ref{generalisedSpringer}, and Remark \ref{lengthofh}.
\end{proof}

\begin{definition}
    \label{good}
    Let $W$ be a Coxeter group. We say a family $F$ of $\Irr(W)$ is \textbf{good} if it contains a character that appears as a $W$-constituent of $\Tsig \otimes \SF$, for some $\Tsig \in \Min(\Omega_\Tw)$.
\end{definition}

\begin{remark}
For $W$ a Weyl group of classical type, the decomposition of $\Tsig\otimes\SF$ can be calculated via explicit combinatorial methods. See \cite{ciubotaru2012spin} for more details.
\end{remark}

One can see that this is a natural definition in the sense of \cite{chan2013spin}, where Chan establishes a version of Theorem \ref{generalisedSpringer} for the non-crystallographic groups.

\begin{lemma}
\label{controlgoodfamily}
Suppose $W$ is a Weyl group associated with Lie algebra $\mathfrak{g}$. Let $\O$ be the minimal solvable orbit in $\mathfrak{g}$. If $F$ is a good family, then $\O \ge \O_F \ge d(\O)$.
\end{lemma}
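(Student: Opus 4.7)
The plan is to combine the immediately preceding lemma with Corollary \ref{enhancedwtype} — the statement is essentially a repackaging of those two results.

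First I would unpack the hypothesis. By assumption, $F$ is good, so there exists some $\sigma \in F$ and some $\Tsig \in \Min(\Omega_\Tw)$ with $\langle \sigma, \Tsig \otimes \SF \rangle_W \neq 0$. Next I would invoke the lemma immediately preceding Definition \ref{good}, which identifies $\Min(\Omega_\Tw) = \Psi^{-1}(\O)$ when $W$ is the Weyl group of a Lie algebra $\mathfrak{g}$ with minimal solvable orbit $\O$. Hence $\Psi(\Tsig) = \O$; in particular $\Tsig$ is attached (via $\Psi$) to the solvable nilpotent orbit $\O$.

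With $\Tsig$ attached to $\O$, $\sigma \in F$, and $\langle \sigma, \Tsig \otimes \SF\rangle_W \neq 0$, the hypotheses of Corollary \ref{enhancedwtype} are exactly met. Applying that corollary directly yields
\[
\O \ \ge\ \O_F \ \ge\ d(\O),
\]
which is the desired conclusion.

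There is no genuine obstacle here: the work has already been done in setting up the $\Min(\Omega_\Tw) = \Psi^{-1}(\O)$ identification and in proving Corollary \ref{enhancedwtype}. The only point worth flagging is that Definition \ref{good} is stated for an arbitrary Coxeter group, while Corollary \ref{enhancedwtype} (and the ordering on orbits) only makes sense in the Weyl group setting — but the lemma itself is phrased under exactly this Weyl-group hypothesis, so the matching of frameworks is automatic and no additional argument is needed.
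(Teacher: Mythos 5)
Your argument is correct and matches the paper's proof, which simply states that the lemma is a rephrasing of Corollary \ref{enhancedwtype}; you have just spelled out the intermediate step via the identification $\Min(\Omega_{\Tw})=\Psi^{-1}(\O)$, which is exactly the intended route.
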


\begin{proof}
This is just rephrasing Corollary \ref{enhancedwtype}.
\end{proof}

We're now ready to restate the generalised Saxl conjecture in this new setting:

\begin{conjecture}[Generalised Saxl Conjecture for Coxeter Groups]
\label{Saxl_3}
Let $W$ be any finite Coxeter group. Then $\left( \ \bigoplus\limits_{\sigma\in F: F \text{ is good}} \sigma\right)^{\otimes 2} $ contains all irreducible characters of $W$.
\end{conjecture}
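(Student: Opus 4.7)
The plan is to verify Conjecture \ref{Saxl_3} case-by-case for the non-crystallographic finite Coxeter groups, which by the standard classification consist of $H_3$, $H_4$, and the dihedral groups $I_2(m)$ for $m=5$ or $m\geq 7$. In each case the procedure is the same and mirrors the computations already carried out earlier in the paper for the exceptional Weyl groups: first enumerate $\Irr_{\gen}(\Tw)$ and the action of $\Omega_\Tw$ to pin down $\Min(\Omega_\Tw)$; then decompose $\Tsig \otimes \SF$ for each $\Tsig \in \Min(\Omega_\Tw)$ into $W$-irreducibles and use the Lusztig-family partition of $\Irr(W)$ to read off the good families $F$; and finally verify by direct calculation that $\bigl(\bigoplus_{\sigma \in F,\, F \text{ good}} \sigma\bigr)^{\otimes 2}$ meets every irreducible character of $W$.

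For $I_2(m)$ I would aim for a uniform argument, since the input data is small enough to handle by hand. The Pin cover $\Tw$ here is a dicyclic group of order $4m$, whose genuine irreducibles are all two-dimensional and can be indexed by an integer parameter; the value $\Tsig(\Omega_\Tw)$ is a simple trigonometric expression in that parameter, and $\Min(\Omega_\Tw)$ consists of the genuine irreducibles of smallest such value. The spinor module $\SF$ (or $\SF = S^++S^-$) is itself two-dimensional, and the tensor $\Tsig \otimes \SF$ can be decomposed explicitly using the standard identity $\rho_j \otimes \rho_k = \rho_{j+k}+\rho_{|j-k|}$ for the two-dimensional characters $\rho_j$ of $W$. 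From this one reads off which Lusztig families of $I_2(m)$ are good, and then the identity above iterated on $(\bigoplus \rho_j)^{\otimes 2}$ covers every two-dimensional $\rho_j$ together with the linear characters, closing the argument.

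For $H_3$ and $H_4$, I would do the verification by direct machine computation in GAP3 together with the CHEVIE package, exactly as was done for the exceptional Weyl groups in the earlier section. Concretely: construct $\Tw$ and its character table, pick out the genuine characters, evaluate $\Omega_\Tw$, tabulate $\Tsig \otimes \SF$ for each $\Tsig \in \Min(\Omega_\Tw)$, match the resulting $W$-constituents against Lusztig's known partition of $\Irr(H_3)$ and $\Irr(H_4)$ (from Geck--Pfeiffer \cite{Geck_Pfeiffer_Characters}), and then compute the square tensor of the sum of characters in the resulting good families, checking that every row of $\Irr(W)$ appears with positive multiplicity. The resulting decomposition tables would be presented in the same style as the $E_7, F_4$ tables already in the paper, and most naturally relegated to the appendix.

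The main obstacle I expect is the dihedral case $I_2(m)$ for general $m$. The $H_3$ and $H_4$ cases, while combinatorially heavy, are finite and mechanical. In contrast, $I_2(m)$ is an infinite family, so the argument must be genuinely uniform in $m$, and in particular one must correctly identify $\Min(\Omega_\Tw)$ and the good families as $m$ varies, paying attention to the parity of $m$ (which changes both the structure of $\Tw$ and the number of linear characters of $W$, so that the good families may not look the same across all residues). A further subtlety is that Lusztig's definition of families in the non-crystallographic setting uses the $a$-function rather than the Springer correspondence, so when matching the $W$-constituents of $\Tsig \otimes \SF$ to families one must reference the $a$-function directly rather than the nilpotent-orbit picture used earlier.
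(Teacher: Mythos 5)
Your proposal matches the paper's treatment: the statement is a conjecture and is proved only for the non-crystallographic groups, by exactly the route you describe --- a uniform hand computation for the dihedral groups using the tensor identity for the two-dimensional characters after showing the single good family is $F_0$ (everything except $\triv$ and $\sgn$), and GAP3/CHEVIE verification for $H_3$ and $H_4$ after pinning down the unique good family via $\Min(\Omega_{\Tw})$ and the explicit character tables of the Pin covers from \cite{chan2013spin} and \cite{read1974linear}. The only cosmetic difference is that for $H_3$ and $H_4$ the paper identifies the good families by hand calculations with the character tables of $\widetilde{A_5}$ and of $\widetilde{W(H_4)}$ rather than leaving that step to the machine.
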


By Lemma \ref{controlgoodfamily}, we immediately see that Conjecture \ref{Saxl_3} implies Conjecture \ref{gen_Saxl_conj_ver_II}. One can also easily check that all progress made towards Conjecture \ref{gen_Saxl_conj_ver_II} also holds for Conjecture \ref{Saxl_3}. Moreover, when $W$ is crystallographic and the associated Lie algebra has a self-dual solvable orbit, we see that Conjecture \ref{gen_Saxl_conj_ver_II} is equivalent to Conjecture \ref{Saxl_3}.

\begin{example}
Even for symmetric groups, one can already see Conjecture \ref{Saxl_3} is strictly stronger than Conjecture \ref{gen_Saxl_conj_ver_I}. Consider the symmetric group $S_9$ with minimal solvable orbit $\lambda=(432)$. Then $(333)$ corresponds to an orbit between $(432)$ and $(432)^t=(3321)$. However, one can verify that $\{(333)\}$ is not a good family, for example by using Proposition 4.3 of \cite{CIUBOTARU20221}.
\end{example}

We will present a proof for Conjecture \ref{Saxl_3} in the case of non-crystallographic groups.

\subsection{Proof for Dihedral Groups}

Let $W = D_{2n}$. Recall the families of dihedral groups are given by $\{\triv\},\{\sgn\},F_0$, where $F_0$ contains all irreducible representations of $D_{2n}$ not equal to $\triv$ or $\sgn$ \cite{Geck_Pfeiffer_Characters}. For calculation purposes, we adopt the character tables for $\widetilde{D_{2n}}$ in \cite{chan2013spin}. Let $\alpha_1$ and $\alpha_2$ be a choice of simple roots for $D_{2n}$. Note that the action of $z$ as $1$ or $-1$ distinguishes the non-genuine and genuine characters respectively below.

\renewcommand{\arraystretch}{1.3}
\begin{table}[ht]
    \caption{Character Table for $\widetilde{D_{2n}}$  ($n$ odd)}
\begin{center}
    \begin{tabular}{c|c c c c c c c} 
    \hline
    Characters & 1 & $z$ & $f_{\alpha_1}$ & $zf_{\alpha_1}$ & $(f_{\alpha_1}f_{\alpha_2})^k$ & $z(f_{\alpha_1}f_{\alpha_2})^k$ \\
    & & & & & $(k=1,2,\cdots,\frac{n-1}{2})$ & $(k=1,\cdots,\frac{n-1}{2})$ \\
    \hline
    $\triv$ & 1 & 1 & 1 & 1 & 1 & 1 \\
    $\sgn$ & 1 & 1 & $-1$ & $-1$ & 1 & 1 \\
    $\phi_i(i=1,\cdots,\frac{n-1}{2})$ & 2 & 2 & 0 & 0 & 2cos($\frac{2ik\pi}{n}$) & 2cos($\frac{2ik\pi}{n}$)\\
    $\widetilde{\chi}_1$ & $1$ & $-1$ & $-\sqrt{-1}$ & $\sqrt{-1}$ & $(-1)^k$ & $(-1)^{k+1}$ \\
    $\widetilde{\chi}_2$ & $1$ & $-1$ & $\sqrt{-1}$ & $-\sqrt{-1}$ & $(-1)^k$ & $(-1)^{k+1}$ \\
    $\widetilde{\rho_i}(i=1,\cdots,\frac{n-1}{2})$ & $2$ & $-2$ & $0$ & $0$ & $2(-1)^k\text{cos}\frac{2ik\pi}{n}$ & $2(-1)^{k+1}\text{cos}\frac{2ik\pi}{n}$ \\
    \hline
    \end{tabular}
\end{center}
\end{table}
\renewcommand{\arraystretch}{1}

\renewcommand{\arraystretch}{1.3}
\begin{table}[ht]
\caption{Character Table for $\widetilde{D_{2n}}$  ($n$ even)}
\begin{center}
    \begin{tabular}{c|c c c c c c c c} 
    \hline
    Characters & 1 & $z$ & $f_{\alpha_1}$ & $f_{\alpha_2}$ & $(f_{\alpha_1}f_{\alpha_2})^k$ & $z(f_{\alpha_1}f_{\alpha_2})^k$ & $(f_{\alpha_1}f_{\alpha_2})^{n/2}$\\
    & & & & & $(k=1,2,\cdots,\frac{n-2}{2})$ & $(k=1,\cdots,\frac{n-2}{2})$ & \\
    \hline
    $\triv$ & 1 & 1 & 1 & 1 & 1 & 1 & 1\\
    $\sgn$ & 1 & 1 & $-1$ & $-1$ & 1 & 1 & $1$\\
    $\sigma_{\alpha_1}$ & $1$ & $1$ & $-1$ & $1$ & $(-1)^k$ & $(-1)^k$ & $(-1)^{n/2}$ \\
    $\sigma_{\alpha_2}$ & $1$ & $1$ & $1$ & $-1$ & $(-1)^k$ & $(-1)^k$ & $(-1)^{n/2}$ \\
    $\phi_i(i=1,\cdots,\frac{n-2}{2})$ & 2 & 2 & 0 & 0 & 2cos($\frac{2ik\pi}{n}$) & 2cos($\frac{2ik\pi}{n}$) & $2(-1)^i$ \\
    $\widetilde{\rho_i}(i=1,\cdots,\frac{n}{2})$ & $2$ & $-2$ & $0$ & $0$ & $2\text{cos}\frac{(2i-1)k\pi}{n}$ & $-2\text{cos}\frac{(2i-1)k\pi}{n}$ & 0\\
    \hline
    \end{tabular}
\end{center}
\end{table}
\renewcommand{\arraystretch}{1}

\begin{remark}[Remark 2.2 of \cite{chan2013spin}]
The spinor modules  in each case are $\widetilde{\rho}_{\frac{n-1}{2}}$ (when $n$ is odd), and  $\widetilde{\rho}_{\frac{n}{2}}$ (when $n$ is even).
\end{remark}

According to the calculation in \cite{chan2013spin}, we see that 
$\Min(\Omega_{\Tw})=\{\widetilde{\chi}_1,\widetilde{\chi}_2\}$, when $n$ is odd;  $\Min(\Omega_{\Tw})=\{\widetilde{\rho}_1\}$, when $n$ is even.

\begin{remark}
From the explicit description of the set $\Min(\Omega_\Tw)$ in the case of dihedral groups, we see that there is absolutely no hope to generalise Theorem \ref{getallfamily} to non-crystallographic groups in our setting.
\end{remark}

\begin{proposition}
    The dihedral group $D_{2n}$ has only one good family $F_0$.
\end{proposition}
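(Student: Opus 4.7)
The plan is to compute, for each parity of $n$, the full decomposition of $\Tsig \otimes \SF$ for every $\Tsig \in \Min(\Omega_\Tw)$, and to verify directly from the character tables above that every irreducible $W$-constituent of these products lies in $F_0$. Since a family is good precisely when it meets one of these tensor products, this will simultaneously show that $F_0$ is good (any constituent suffices) and that neither $\{\triv\}$ nor $\{\sgn\}$ is good.

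First suppose $n$ is odd, so $\Min(\Omega_\Tw) = \{\widetilde{\chi}_1, \widetilde{\chi}_2\}$ and $\SF = \widetilde{\rho}_{(n-1)/2}$. Each $\widetilde{\chi}_i$ is one-dimensional, hence $\widetilde{\chi}_i \otimes \widetilde{\rho}_{(n-1)/2}$ is a two-dimensional non-genuine character of $\Tw$. Multiplying character values class by class, the product vanishes on the reflection classes (since $\widetilde{\rho}_{(n-1)/2}$ does), while on each rotation class $(f_{\alpha_1} f_{\alpha_2})^k$ (and its $z$-twist) the sign factors $(-1)^k$ from $\widetilde{\chi}_i$ and $\widetilde{\rho}_{(n-1)/2}$ cancel, leaving the value $2\cos(\tfrac{(n-1)k\pi}{n})$. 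This matches the character of $\phi_{(n-1)/2}$ exactly, so $\widetilde{\chi}_i \otimes \widetilde{\rho}_{(n-1)/2} = \phi_{(n-1)/2} \in F_0$.

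Next suppose $n$ is even, so $\Min(\Omega_\Tw) = \{\widetilde{\rho}_1\}$ and $\SF = \widetilde{\rho}_{n/2}$. The product $\widetilde{\rho}_1 \otimes \widetilde{\rho}_{n/2}$ is four-dimensional, and its character vanishes on the reflection classes and on $(f_{\alpha_1}f_{\alpha_2})^{n/2}$. On the remaining rotation classes, the product-to-sum identity $2\cos A\cos B = \cos(A-B) + \cos(A+B)$ rewrites the product of the two rotation values as $2\cos(\tfrac{(n-2)k\pi}{n}) + 2(-1)^k$. Comparing with the non-genuine portion of the character table identifies the decomposition as $\widetilde{\rho}_1 \otimes \widetilde{\rho}_{n/2} = \phi_{(n-2)/2} \oplus \sigma_{\alpha_1} \oplus \sigma_{\alpha_2}$, all three constituents of which lie in $F_0$.

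In both parities no constituent equals $\triv$ or $\sgn$, while at least one constituent sits in $F_0$, so $F_0$ is the unique good family. The only delicate point in the argument is bookkeeping the central $z$-action, since on genuine characters the $z$-twisted rotation classes pick up extra signs relative to the untwisted ones; once this is tracked consistently, the rest is elementary trigonometry and a dimension check.
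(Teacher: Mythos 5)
Your proof is correct, and your character computations check out against the tables (for $n$ odd, $\widetilde{\chi}_i\otimes\widetilde{\rho}_{(n-1)/2}=\phi_{(n-1)/2}$; for $n$ even, $\widetilde{\rho}_1\otimes\widetilde{\rho}_{n/2}=\phi_{(n-2)/2}\oplus\sigma_{\alpha_1}\oplus\sigma_{\alpha_2}$, with the value on $(f_{\alpha_1}f_{\alpha_2})^{n/2}$ matching because $2(-1)^{(n-2)/2}+2(-1)^{n/2}=0$). However, you take a genuinely different route from the paper. The paper never decomposes $\Tsig\otimes\SF$: it argues abstractly that if $\triv$ or $\sgn$ were a constituent of $\Tsig\otimes S$, then $\langle\Tsig, S\rangle\neq 0$ or $\langle\Tsig, S\otimes\sgn\rangle\neq 0$, forcing $\Tsig=S$ (since $S=S\otimes\sgn$ here), and hence $S\in\Min(\Omega_\Tw)$ --- contradicting the identification of $\Min(\Omega_\Tw)$ from the tables. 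That argument is shorter, needs only to know which module is the spinor module and that it is not minimal, and reuses a trick already deployed earlier in the paper; but it only rules out $\{\triv\}$ and $\{\sgn\}$ and still needs the (easy) observation that some constituent exists, so that $F_0$ is good. Your explicit decomposition buys strictly more information --- it identifies exactly which members of $F_0$ occur, which is the kind of data that Remark 6.8 alludes to when noting that Theorem \ref{getallfamily} fails to generalise to dihedral groups. One small caveat applies equally to both arguments: everything implicitly assumes $n\geq 3$ (for $n=2$ your index $\phi_{(n-2)/2}$ degenerates to $\triv\oplus\sgn$ and the spinor module itself lies in $\Min(\Omega_\Tw)$), but $D_4$ is not an irreducible Coxeter group, so this is harmless.
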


\begin{proof}
Let $S$ be the spinor module for $D_{2n}$. Note $S=S\otimes \sgn$ and $\SF=S$. We only need to show that for any $\Tsig\in\Min(\Omega_\Tw)$, neither $\triv$ nor $\sgn$ can be a constituent of $\Tsig \otimes S$. Suppose not, then either $\triv$ or $\sgn$ is a constituent of $\Tsig \otimes S$, but this forces $\Tsig=S$ by a similar argument as previously. Therefore $S\in\Min(\Omega_\Tw)$, contradiction.
\end{proof}

\begin{proposition}
    Conjecture \ref{Saxl_3} holds for dihedral groups.
\end{proposition}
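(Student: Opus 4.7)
The plan is to combine the preceding proposition, which identifies $F_0$ as the unique good family of $D_{2n}$, with an explicit Clebsch--Gordan analysis. It suffices to show that every irreducible character of $D_{2n}$ appears as a constituent of
\[
\Big(\bigoplus_{\sigma\in F_0}\sigma\Big)^{\otimes 2} = \bigoplus_{\sigma,\sigma'\in F_0}\sigma\otimes\sigma'.
\]
The key computational input is the tensor product rule for dihedral groups: realising each two-dimensional irreducible $\phi_k$ as $\Ind_{C_n}^{D_{2n}}\chi_k$, where $\chi_k$ sends a fixed generator of the rotation subgroup $C_n$ to $e^{2\pi i k/n}$, a short character computation yields
\[
\phi_k\otimes\phi_l = \phi_{k+l}+\phi_{k-l},
\]
subject to the identifications $\phi_{-m}=\phi_m$, $\phi_{m+n}=\phi_m$, $\phi_0=\triv+\sgn$, and (when $n$ is even) $\phi_{n/2}=\sigma_{\alpha_1}+\sigma_{\alpha_2}$.

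Granted this rule, the identity $\phi_1\otimes\phi_1 = \phi_2+\triv+\sgn$ immediately exhibits $\triv$ and $\sgn$ as constituents of the tensor square. An induction on $i$, using $\phi_1\otimes\phi_{i-1} = \phi_i+\phi_{i-2}$, then recovers every $\phi_i$ with $1\le i\le \lfloor(n-1)/2\rfloor$ from the tensor square, with $\phi_1$ itself appearing via $\phi_1\otimes\phi_2 = \phi_1+\phi_3$. When $n$ is even, the remaining one-dimensional characters $\sigma_{\alpha_1}$ and $\sigma_{\alpha_2}$ emerge from the $\phi_{n/2}$ summand of $\phi_1\otimes\phi_{n/2-1} = \phi_{n/2}+\phi_{n/2-2}$ for $n\ge 6$, and from $\phi_1\otimes\phi_1$ directly in the case $n=4$.

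The argument is essentially bookkeeping, and the only real subtlety is careful tracking of the boundary identifications of $\phi_k$ into one-dimensional pieces at $k=0$ and, when $n$ is even, $k=n/2$. The small cases $n=3,4$ must be verified separately because $F_0$ contains very few characters there: for $n=3$, one has $F_0=\{\phi_1\}$ with $\phi_1\otimes\phi_1 = \phi_1+\triv+\sgn$ (using $\phi_2=\phi_1$ under the index identification modulo $3$), covering every irreducible of $S_3\cong D_6$. For $n=4$, $\phi_1\otimes\phi_1 = \triv+\sgn+\sigma_{\alpha_1}+\sigma_{\alpha_2}$ accounts for everything outside $F_0$, and $\sigma_{\alpha_1}\otimes\phi_1=\phi_1$ completes the picture.
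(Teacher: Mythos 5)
Your argument is correct and is essentially the same as the paper's: both rely on the convention $\phi_0=\triv\oplus\sgn$, $\phi_{n/2}=\sigma_{\alpha_1}\oplus\sigma_{\alpha_2}$, the rule $\phi_i\otimes\phi_j=\phi_{i+j}\oplus\phi_{i-j}$, and the same specific products ($\phi_1\otimes\phi_1$, $\phi_1\otimes\phi_{i-1}$, $\phi_1\otimes\phi_2$, $\phi_1\otimes\phi_{(n-2)/2}$) to exhaust $\Irr(D_{2n})$. Your explicit treatment of the small cases $n=3,4$ is a welcome extra check but not a different method.
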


\begin{proof}
For convenience, we take the convention that $\phi_{i}=\phi_{-i}=\phi_{i+n}$, $\phi_0=\triv\bigoplus\sgn$, $ \phi_{n/2}=\sigma_{\alpha_1}\bigoplus\sigma_{\alpha_2}$, where the last convention is only for $n$ even. Under these conventions, we always have $\phi_i\otimes\phi_j=\phi_{i+j}\bigoplus\phi_{i-j}$ regardless of the parity of $n$. In particular, $\phi_1\otimes\phi_1$ contains $\triv$ and $\sgn$, $\phi_{i-1}\otimes\phi_1$ contains $\phi_i$ for $i>1$, $\phi_2\otimes\phi_1$ contains $\phi_1$, and $\phi_1\otimes\phi_{\frac{n-2}{2}}$ contains $\sigma_{\alpha_1},\sigma_{\alpha_2}$ when $n$ is even. This concludes the proof.
\end{proof}

\subsection{Proof for \texorpdfstring{$H_3 + H_4$}{}}

In this subsection, we follow Geck and Pfeiffer's notation for the characters of $H_3$ and $H_4$ \cite{Geck_Pfeiffer_Characters}. Let $\phi_{x,y}$ be the irreducible representation of dimension $x$ with $b$-invariant $y$. The $b$-invariant is defined in Section 5.2.2 and can be computed by Algorithm 5.3.5, both of \cite{Geck_Pfeiffer_Characters}.

First, we look at $H_{3}$. Recall the Weyl group for $H_3$ is isomorphic to $\Z/2\Z\times A_5$, where $A_5$ is the alternating group on 5 letters. Let $\alpha_i$  $(i=1,2,3)$ be the simple roots, and $w_0$ the non-trivial element in $\Z/2\Z$ (this is actually the longest element in $W(H_3)$). Therefore, to describe the characters of $W(H_3)$ and $\widetilde{W(H_3)}$, it's sufficient to describe the characters of $A_5$ and $\widetilde{A_5}$ respectively. 

More precisely, given a representation $\sigma\in\Irr(A_5)$ (resp. $\Tsig\in\Irrg(\widetilde{A_5})$), we can obtain two representations $\sigma^+,\sigma^-\in\Irr(W(H_3))$ (resp.  $\Tsig^+,\Tsig^-\in\Irrg(\widetilde{W(H_3)})$. We thus present the character tables for $A_5$ and $\widetilde{A_5}$ adapted from \cite{read1974linear}. Set $\tau=(1+\sqrt{5})/2$ and $\Bar{\tau}=(1-\sqrt{5})/2$.

\renewcommand{\arraystretch}{1.3}
\begin{table}[ht]
    \caption{Character Table for $A_5$}
\begin{center}
    \begin{tabular}{c|c c c c c c } 
    \hline
    Characters & 1 & $s_{\alpha_1}s_{\alpha_2}$ & $(s_{\alpha_1}s_{\alpha_2})^2$ & $s_{\alpha_2}s_{\alpha_3}$ & $s_{\alpha_1}s_{\alpha_3}$  \\
    \hline
    $\phi_1$ & 1 & 1 & 1 & 1 & 1
    \\
    $\phi_4$ & 4 & $-1$ & $-1$ & $1$ & 0 \\
    $\phi_3$ & 3 & $\tau$ & $\Bar{\tau}$ & 0 & $-1$ \\
    $\Bar{\phi_3}$ & 3 & $\Bar{\tau}$ & $\tau$ & 0 & $-1$\\
    $\phi_5$ & 5 & 0 & 0 & $-1$ & 1 \\
    \hline
    \end{tabular}
\end{center}
\end{table}
\renewcommand{\arraystretch}{1.3}
\begin{table}[ht]
    \caption{Characters of the Genuine Representations of $\widetilde{A_5}$}
\begin{center}
    \begin{tabular}{c|c c c c c c } 
    \hline
    Characters & $\pm1$ & $\pm f_{\alpha_1}f_{\alpha_2}$ & $\pm(f_{\alpha_1}f_{\alpha_2})^2$ & $\pm f_{\alpha_2}f_{\alpha_3}$ & $\pm f_{\alpha_1}f_{\alpha_3}$  \\
    \hline
    $\widetilde{\chi_2}$ & $\pm 2$ & $\pm\Bar{\tau}$ & $\mp\tau$ & $\pm1$ & 0\\
    $\widetilde{\Bar{\chi_2}}$ & $\pm 2$ & $\pm\tau$ & $\mp\Bar{\tau}$ & $\pm1$ & 0 \\
    $\widetilde{\chi_6}$ & $\pm 6$ & $\mp1$ & $\pm1$ & $0$ & 0\\
    $\widetilde{\chi_4}$ & $\pm 4$ & $\pm1$ & $\mp1$ & $\mp1$ & 0\\
    \hline
    \end{tabular}
\end{center}
\end{table}
\renewcommand{\arraystretch}{1}

Again, from the calculations in \cite{chan2013spin}, we see that the spinor modules correspond to $\widetilde{\Bar{\chi}_2}^{\pm}$, and $\Min(\Omega_\Tw)=\{\widetilde{\chi_2}^{\pm}\}$.

\begin{lemma}
    The group $W(H_3)$ has only one good family $\{\phi_{4,3},\phi_{4,4}\}$.
\end{lemma}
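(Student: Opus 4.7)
The plan is to compute the $W(H_3)$-constituents of $\widetilde{\sigma}\otimes\SF$ for each $\widetilde{\sigma}\in\Min(\Omega_{\Tw})=\{\widetilde{\chi}_2^{\pm}\}$ using the given character tables, then match the outcome to the Lusztig family classification for $H_3$. The isomorphism $W(H_3)\cong\Z/2\Z\times A_5$ (where the $\Z/2\Z$ factor is generated by $w_0=-1$, which is central) reduces the computation to an $A_5$-level calculation followed by a bookkeeping step for the $\pm$ signs on the $w_0$ factor.

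First I would compute $\widetilde{\chi}_2\otimes\widetilde{\Bar{\chi}_2}$ as an $\widetilde{A}_5$-representation class function by multiplying character values class-by-class. Using $\tau\bar\tau=-1$, the resulting character takes values $(4,-1,-1,1,0)$ on the listed conjugacy classes. Because both factors are genuine, $z$ acts trivially on the product, and comparing against the $A_5$ character table identifies it as $\phi_4$. Next, since $\SF=S^+\oplus S^-=\widetilde{\Bar{\chi}_2}^+\oplus\widetilde{\Bar{\chi}_2}^-$, and the superscripts $\pm$ record the action of $w_0$, the same calculation on the full group $W(H_3)$ gives
\[
\widetilde{\chi}_2^{\,\pm}\otimes\SF \;=\; \phi_4^+ \oplus \phi_4^-.
\]
In Geck--Pfeiffer's notation, these two $4$-dimensional irreducible characters of $W(H_3)$ are precisely $\phi_{4,3}$ and $\phi_{4,4}$, distinguished by their $b$-invariants (which differ by the tensor product with $\sgn=\phi_{1,15}$, corresponding to the sign on $w_0$).

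Finally, I would invoke the classification of Lusztig families for $H_3$ from Chapter 6 of \cite{Geck_Pfeiffer_Characters}, in which $\{\phi_{4,3},\phi_{4,4}\}$ appears as a single family. Hence this family is good, and because the only constituents of $\widetilde{\sigma}\otimes\SF$ (for $\widetilde{\sigma}\in\Min(\Omega_{\Tw})$) already lie inside it, no other family is good, which proves the claim. The only mildly subtle step is pinning down the correspondence between my notation $\phi_4^{\pm}$ and the labels $\phi_{4,3},\phi_{4,4}$; this is resolved by computing the $b$-invariants via Algorithm 5.3.5 of \cite{Geck_Pfeiffer_Characters}, but either way both characters fall in the same family, so the identification is not needed for the conclusion.
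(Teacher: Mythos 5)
Your proposal is correct and follows essentially the same route as the paper: both compute the tensor product of the elements of $\Min(\Omega_{\Tw})=\{\widetilde{\chi}_2^{\pm}\}$ with the spinor modules $\widetilde{\Bar{\chi}_2}^{\pm}$, identify the constituents as $\phi_4^{\pm}=\phi_{4,3},\phi_{4,4}$, and note that these form a single Lusztig family. Your version merely spells out the class-by-class character multiplication (via $\tau\bar{\tau}=-1$) that the paper leaves implicit.
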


\begin{proof}
We calculate that $(\widetilde{\chi_2}^++\widetilde{\chi_2}^-)\otimes(\widetilde{\Bar{\chi_2}}^{+}+\widetilde{\Bar{\chi_2}}^{-})=2(\phi_4^++\phi_4^-)$. Note that in the notation of \cite{Geck_Pfeiffer_Characters}, $\phi_4^{\pm}$ are precisely the characters $\phi_{4,3},\phi_{4,4}$ and they form a family.
\end{proof}

\begin{proposition}
    Conjecture \ref{Saxl_3} holds for $H_3$. Namely, $(\phi_{4,3}+\phi_{4,4})^{\otimes2}$ contains all irreducible characters of $W(H_3)$, each with multiplicity 2.
\end{proposition}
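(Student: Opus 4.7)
The plan is to exploit the product decomposition $W(H_3)\cong \Z/2\Z\times A_5$ already recalled in the excerpt, which reduces the entire computation to the alternating group $A_5$. Under this decomposition, every irreducible character of $W(H_3)$ is of the form $\sigma^\pm$ for a unique $\sigma\in\Irr(A_5)$, where the sign records the action of the generator $w_0$ of the $\Z/2\Z$-factor. In particular, $\phi_{4,3}$ and $\phi_{4,4}$ are the two lifts $\phi_4^+$ and $\phi_4^-$ of the unique $4$-dimensional irreducible $\phi_4\in\Irr(A_5)$; this is consistent with the preceding computation $(\widetilde{\chi_2}^++\widetilde{\chi_2}^-)\otimes(\widetilde{\bar\chi_2}^++\widetilde{\bar\chi_2}^-)=2(\phi_4^++\phi_4^-)$ appearing in the lemma.

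First I would record the elementary tensor product rule $\sigma^{\epsilon_1}\otimes\tau^{\epsilon_2}=(\sigma\otimes\tau)^{\epsilon_1\epsilon_2}$, which is immediate because $w_0$ is central and acts as the scalar $\epsilon_i$ on $\sigma^{\epsilon_i}$. Expanding the square of $\phi_{4,3}+\phi_{4,4}=\phi_4^++\phi_4^-$ then yields
\[
(\phi_4^++\phi_4^-)^{\otimes 2}=2\,(\phi_4\otimes\phi_4)^{+}+2\,(\phi_4\otimes\phi_4)^{-},
\]
so the claim reduces to showing that $\phi_4\otimes\phi_4$ contains each $\sigma\in\Irr(A_5)$ with multiplicity exactly one.

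The second step is a direct character-theoretic computation in $A_5$ using the table supplied in the excerpt. The five conjugacy classes have sizes $1,12,12,20,15$, summing to $|A_5|=60$, and the values of $\chi_{\phi_4}^2$ on these classes are $16,1,1,1,0$. I would then form the inner product $\langle\chi_{\phi_4}^2,\chi_\psi\rangle_{A_5}$ against each $\psi\in\{\phi_1,\phi_3,\bar\phi_3,\phi_4,\phi_5\}$; the only non-routine simplification is the identity $\tau+\bar\tau=1$, which enters when evaluating $\langle\chi_{\phi_4}^2,\chi_{\phi_3}\rangle$ and its conjugate. A useful sanity check is the dimension identity $16=1+3+3+4+5$, which forces all multiplicities to equal $1$ once positivity is confirmed.

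Combining the two steps gives
\[
(\phi_{4,3}+\phi_{4,4})^{\otimes 2}=2\sum_{\sigma\in\Irr(A_5)}(\sigma^++\sigma^-),
\]
i.e.\ every irreducible character of $W(H_3)$ appears with multiplicity exactly $2$. I do not anticipate any genuine obstacle: the bookkeeping of the central $w_0$-sign under tensor products is the only mildly subtle point, and the numerical verification in $A_5$ occupies just five inner-product calculations. The same template (reduce to a simple normal subgroup via a product decomposition, then compute in the character table) should also guide the treatment of $H_4$.
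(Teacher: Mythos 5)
Your proof is correct, but it takes a genuinely different route from the paper: the paper disposes of this proposition by direct machine verification in GAP~3/CHEVIE, exactly as it does for the exceptional Weyl groups and for $H_4$, whereas you give a short hand computation. Your reduction via $W(H_3)\cong\Z/2\Z\times A_5$ is sound — the identification $\{\phi_{4,3},\phi_{4,4}\}=\{\phi_4^{+},\phi_4^{-}\}$ is exactly what the preceding lemma in the paper records, the sign rule $\sigma^{\epsilon_1}\otimes\tau^{\epsilon_2}=(\sigma\otimes\tau)^{\epsilon_1\epsilon_2}$ is immediate from centrality of $w_0$, and the five inner products check out (with class sizes $1,12,12,20,15$ and $\tau+\bar\tau=1$), giving $\phi_4\otimes\phi_4=\phi_1+\phi_3+\bar\phi_3+\phi_4+\phi_5$, each with multiplicity one as the dimension count $16=1+3+3+4+5$ already forces. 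What your approach buys is a transparent, computer-free explanation of \emph{why} every multiplicity is exactly $2$; what the paper's approach buys is uniformity with the other cases. One caveat on your closing remark: the same template does not transfer to $H_4$, since $W(H_4)$ is not a direct product of $\Z/2\Z$ with a simple group (it is a more involved extension built from two copies of $\widetilde{A_5}$), which is presumably why the paper falls back on CHEVIE there as well.
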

\begin{proof}
This can be verified directly with the assistance of GAP 3 \cite{GAP3} and CHEVIE \cite{CHEVIE}.
\end{proof}

Next, we deal with $H_4$. Since the characters for $W(H_4)$ and its double cover are quite lengthy, we refer the reader to Table II(i)(ii)(iii) of \cite{read1974linear} for more details. In addition, for consistency of notions, we shall denote the 20 genuine representations as $\widetilde{\chi}_i$ instead of $\chi_i$ in \cite{read1974linear}, for $35\leq i\leq 54$.

Again by \cite{chan2013spin}, we see that the unique spinor module is $\widetilde{\chi}_{36}$ and $\Min(\Omega_\Tw)=\{\widetilde{\chi}_{35}\}$.

\begin{lemma}
 The group $W(H_4)$ has only one good family with $a$-value $6$.
\end{lemma}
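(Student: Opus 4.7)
The plan is to reduce the question to a finite character-theoretic calculation. Since $\Min(\Omega_\Tw) = \{\widetilde{\chi}_{35}\}$ consists of a single element, and $\dim V = 4$ is even (so there is a unique spinor module $\SF = \widetilde{\chi}_{36}$), Definition \ref{good} specialises to: a family $F \subseteq \Irr(W(H_4))$ is good if and only if $F$ contains a $W(H_4)$-constituent of $\widetilde{\chi}_{35} \otimes \widetilde{\chi}_{36}$. Because a tensor product of two genuine $\Tw$-representations is non-genuine ($z$ acts by $(-1)(-1)=1$) and thus descends to an honest representation of $W(H_4)$, the task reduces to decomposing this single class function into $W(H_4)$-irreducibles.

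First I would read off the character values of $\widetilde{\chi}_{35}$ and $\widetilde{\chi}_{36}$ from Table II of \cite{read1974linear}, multiply them pointwise across the conjugacy classes of $\widetilde{W(H_4)}$, and project onto $W(H_4)$ by restricting to those classes on which $z$ acts as $+1$. Taking inner products against each irreducible character of $W(H_4)$ (also tabulated in \cite{read1974linear}) then yields the multiplicity of each $W$-constituent in $\widetilde{\chi}_{35} \otimes \widetilde{\chi}_{36}$.

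Second, I would consult the classification of Lusztig families for $H_4$ from Section 6.5 of \cite{Geck_Pfeiffer_Characters}, where families are determined by Lusztig's $a$-function. There are two statements to verify: every constituent obtained in step one lies in a single Lusztig family, and that family has $a$-value $6$. Both can be read off directly from the Geck--Pfeiffer tables once the constituents are known.

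The main obstacle is that the argument is purely computational rather than conceptual: for a non-crystallographic Coxeter group there is no general theorem guaranteeing that the $W$-constituents of $\Tsig \otimes \SF$ for $\Tsig \in \Min(\Omega_\Tw)$ land in a single Lusztig family — indeed the remark preceding this lemma explicitly warns that Theorem \ref{getallfamily} does not extend to the non-crystallographic setting. Nevertheless, since $|W(H_4)| = 14400$ and there are only $34$ irreducible characters, the verification is comfortably within reach of GAP 3 \cite{GAP3} together with CHEVIE \cite{CHEVIE}, mirroring the $H_3$ verification and the exceptional-type computations carried out earlier in the paper.
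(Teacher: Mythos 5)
Your proposal is correct and follows essentially the same route as the paper: decompose $\widetilde{\chi}_{35}\otimes\widetilde{\chi}_{36}$ into $W(H_4)$-irreducibles using Read's character tables and then check against Geck--Pfeiffer that all constituents lie in the single family of $a$-value $6$. The paper simply records the outcome of that computation, namely $\widetilde{\chi}_{35}\otimes\widetilde{\chi}_{36}=\chi_{19}\oplus\chi_{29}$ with both constituents in that family.
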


\begin{proof}
This is another straightforward calculation. It's easy to verify $\widetilde{\chi}_{35}\otimes \widetilde{\chi}_{36}=\chi_{19}\bigoplus\chi_{29}$, by Table II(i)(ii) of \cite{read1974linear}. But both $\chi_{19}$ and $\chi_{29}$ lies in the family with $a$-value $6$, see Appendix C of \cite{Geck_Pfeiffer_Characters}.
\end{proof}
\begin{proposition}
    Conjecture \ref{Saxl_3} holds for $H_4$. Namely, if $F$ is the family of $a$-value 6. Then $(\bigoplus_{\sigma \in F}\sigma)^{\otimes 2}$ contains all irreducible characters of $W(H_4)$.
\end{proposition}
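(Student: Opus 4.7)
The plan is to imitate the strategy already used for $H_3$: since the statement is about a single finite Coxeter group and involves no structural claim beyond the inclusion of constituents, a direct character-theoretic verification suffices. Concretely, I would first identify the family $F$ of $a$-value $6$ explicitly, using Appendix C of \cite{Geck_Pfeiffer_Characters}; this gives a finite list of irreducible characters $\sigma_1,\dots,\sigma_r \in \Irr(W(H_4))$.

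Next, I would compute the character of $\chi_F := \bigoplus_{i=1}^r \sigma_i$ from the full character table of $W(H_4)$ in \cite{read1974linear}, form $\chi_F^{\otimes 2}$ by pointwise squaring on each conjugacy class, and then decompose it into irreducibles by taking the inner products $\langle \chi_F^{\otimes 2}, \chi\rangle_{W(H_4)}$ for every $\chi\in\Irr(W(H_4))$. The claim of the proposition is precisely that each of these 34 inner products is strictly positive. This calculation is most efficiently carried out in GAP 3 with the package CHEVIE, exactly as in the preceding $H_3$ case and in the exceptional-type verifications of Conjecture \ref{gen_Saxl_conj_ver_II}. The output of this computation would naturally be displayed as a table analogous to the one for $E_7$, listing each $\chi\in\Irr(W(H_4))$ together with its multiplicity in $\chi_F^{\otimes 2}$; for brevity this full table would be relegated to the appendix, with only the positivity conclusion stated in the main text.

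The main obstacle is essentially bookkeeping rather than conceptual: $W(H_4)$ has 34 conjugacy classes and 34 irreducible characters, and the family $F$ of $a$-value $6$ is not a singleton, so the tensor product $\chi_F^{\otimes 2}$ has cross terms $\sigma_i\otimes\sigma_j$ for $i\ne j$ that must all be accounted for. Because everything is finite and the character table is available, there is no genuine obstruction; the only thing to be careful about is matching the indexing of $F$ used in \cite{Geck_Pfeiffer_Characters} with the indexing $\chi_i$ used in \cite{read1974linear}. Once this dictionary is fixed, the proof reduces to a finite computation whose output is the desired table.
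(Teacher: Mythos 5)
Your proposal is exactly the paper's proof: identify the sixteen characters in the family of $a$-value $6$, form the square of their sum, and verify by explicit computation in GAP~3 with CHEVIE that all $34$ irreducible characters of $W(H_4)$ occur with positive multiplicity, recording the result in a table. No further comment is needed.
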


\begin{proof}
We directly verify this statement using GAP 3 \cite{GAP3} and CHEVIE \cite{CHEVIE}. $W(H_4)$ has 34 characters, and the family with $a$-value 6 consists of: $$\{
\phi_{24,6},
\phi_{24,7},
\phi_{40,8},
\phi_{48,9},
\phi_{18,10},
\phi_{30,10}',
\phi_{30,10}'',
\phi_{24,11}, 
\phi_{16,11}, 
\phi_{6,12}, 
\phi_{8,12},
\phi_{10,12},
\phi_{24,12}, 
\phi_{16,13},
\phi_{8,13},
\phi_{6,20}
\}$$

We summarize the constituents and multiplicities in $(\bigoplus_{\sigma \in F}\sigma)^{\otimes 2}$ in the following table:\renewcommand{\arraystretch}{1.2}
\begin{center}
\begin{tabular}{ l l l l l l l }
$\phi_{1,0}\ (16)$
& $\phi_{1,60}\ (16)  $
& $\phi_{4,1}\ (40)  $
& $\phi_{4,31}\ (40)  $
& $\phi_{4,7}\ (40)  $
& $\phi_{4,37}\ (40)  $
& $\phi_{6,12}\ (66)  $
\\
$\phi_{6,20}\ (66)  $ &
$\phi_{8,12}\ (83)  $
& $\phi_{8,13}\ (74)  $
& $\phi_{9,2}\ (78)  $
& $\phi_{9,22}\ (78)  $
& $\phi_{9,6}\ (78)  $
& $\phi_{9,26}\ (78)  $
\\
$\phi_{10,12}\ (104)  $
& $\phi_{16,11}\ (136)  $
&
$\phi_{16,13}\ (136)  $
& $\phi_{16,3}\ (122)  $
& $\phi_{16,21}\ (122)  $
& $\phi_{16,6}\ (126)  $
& $\phi_{16,18}\ (126)  $
\\
$\phi_{18,10}\ (150)  $ 
& $\phi_{24,11}\ (194)  $
& $\phi_{24,7}\ (194)  $
&
$\phi_{24,12}\ (193)  $
& $\phi_{24,6}\ (193)  $
& $\phi_{25,4}\ (186) $
&
$ \phi_{25,16}\ (186) $
\\
$ \phi_{30,10}'\ (236)  $
& $ \phi_{30,10}''\ (236)  $
& $ \phi_{36,5}\ (250)  $
& $ \phi_{36,15}\ (250)  $
&
$ \phi_{40,8}\ (303)  $
& $ \phi_{48,9}\ (344)  $ \\
\end{tabular}
\end{center}
\ \\ This concludes the proof.
\end{proof}

We thus obtain the generalised Saxl conjecture for non-crystallographic groups: 
\begin{theorem}
Let $W$ be any non-crystallographic group. Then $\left( \ \bigoplus\limits_{\sigma\in F: F \text{ is good}} \sigma\right)^{\otimes 2}$ contains all irreducible characters of $W$.
\end{theorem}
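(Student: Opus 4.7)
The plan is to leverage the classification of finite irreducible Coxeter groups together with the explicit case work already carried out in the two preceding subsections. The irreducible finite Coxeter groups that are non-crystallographic are exactly the dihedral groups $I_2(m) = D_{2m}$ for $m = 5$ or $m \ge 7$, together with $H_3$ and $H_4$. A general finite non-crystallographic Coxeter group decomposes as a direct product of irreducible Coxeter groups, at least one factor of which is non-crystallographic, while each factor is either of the above type or a Weyl group. So the first step is to reduce to the irreducible case; once that is done the theorem is immediate from the propositions already proved for $D_{2n}$, $H_3$, and $H_4$.

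For the reduction, suppose $W = W_1 \times W_2$. The Pin cover $\Tw$ is a quotient of $\widetilde{W_1} \times \widetilde{W_2}$ by a central $\Z/2\Z$, and $\Irrg(\Tw)$ consists of outer tensor products $\Tsig_1 \boxtimes \Tsig_2$ in which exactly one of $\Tsig_1, \Tsig_2$ is genuine (the other non-genuine). The Casimir $\Omega_\Tw$ decomposes compatibly, so $\Tsig(\Omega_\Tw) = \Tsig_1(\Omega_{\widetilde{W_1}}) + \Tsig_2(\Omega_{\widetilde{W_2}})$, and the spinor module of $W$ factors as the outer tensor of the spinor modules of the two factors. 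It follows that $\Min(\Omega_\Tw)$ sits inside $\Min(\Omega_{\widetilde{W_1}}) \boxtimes \Min(\Omega_{\widetilde{W_2}})$, and that Lusztig families of $W$ are products of families of $W_1$ and $W_2$. Consequently $F = F_1 \times F_2$ is good for $W$ exactly when both $F_1$ is good for $W_1$ and $F_2$ is good for $W_2$. Tensoring the Conjecture \ref{Saxl_3} sums for the two factors then produces all of $\Irr(W_1) \boxtimes \Irr(W_2) = \Irr(W)$, completing the reduction.

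Once restricted to the irreducible case, I would simply appeal to the three propositions already proved in subsections 7.2 and 7.3: the conjecture holds for every dihedral group, for $H_3$, and for $H_4$. Irreducible Weyl-group factors, which may appear alongside non-crystallographic factors in a product, are handled by the same reduction because each Weyl group also has at least one good family (namely any family containing a $W$-constituent of $\Tsig \otimes \SF$ for some $\Tsig \in \Min(\Omega_\Tw)$), and the proof of Conjecture \ref{Saxl_3} for these factors is not required in the statement restricted to non-crystallographic $W$.

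The main technical point, and hence the one I would write up most carefully, is the compatibility of good families with direct products. Everything else is a bookkeeping exercise or an appeal to the preceding subsections. There are no estimates or long calculations to do; the essential content has already been verified, either by the explicit character-table arguments for $D_{2n}$ or by the \textsf{GAP 3} / \textsf{CHEVIE} computations for $H_3$ and $H_4$.
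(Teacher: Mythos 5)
Your core argument is the same as the paper's: the theorem is proved by combining the three propositions of Sections 7.2 and 7.3 (dihedral groups, $H_3$, $H_4$), which is exactly what the paper does, and for the irreducible non-crystallographic Coxeter groups --- which is what the statement is about --- your proposal is complete.

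The extra reduction to direct products is both unnecessary in context and flawed as written. First, the description of $\Irrg(\Tw)$ for $W=W_1\times W_2$ is wrong: inside $\Pin(V_1\oplus V_2)$ the two covers share the \emph{same} central element $z$, so a genuine representation of $\Tw$ must restrict to genuine representations of \emph{both} $\widetilde{W_1}$ and $\widetilde{W_2}$ (and the relevant product is a graded tensor product, since odd elements of the two Clifford factors anticommute), not "exactly one genuine, one non-genuine." Second, and more seriously, the reduction requires Conjecture \ref{Saxl_3} to hold for every irreducible factor; if a Weyl group factor of classical type appears, that instance of the conjecture is open (it essentially contains Saxl's conjecture), so your claim that the crystallographic factors are "handled by the same reduction" does not go through. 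If you want the theorem for reducible groups you must either restrict to products of non-crystallographic irreducible factors or assume the conjecture for the Weyl group factors.
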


As a final remark, we demonstrate the connection between the cuspidal family and the notion of good families:

\begin{proposition}
Let $W$ be a finite Coxeter group. Suppose $W$ has a cuspidal family $F$, then $F$ is the unique good family for $W$.
\end{proposition}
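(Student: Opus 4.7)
The plan is to split the proof into the crystallographic and non-crystallographic cases, reducing the former to Theorem \ref{getallfamily}.

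For the crystallographic case, suppose $W$ is the Weyl group of a simple Lie algebra $\mathfrak{g}$ admitting a cuspidal family $F$. By Remark \ref{cuspidal}, $F$ equals the Lusztig family attached to the unique self-dual solvable orbit $\O$ of $\mathfrak{g}$; inspecting the enumeration following Conjecture \ref{gen_Saxl_conj_ver_II} shows that whenever such an $\O$ exists it is the unique minimal solvable orbit. The lemma preceding Definition \ref{good} then yields $\Min(\Omega_{\Tw}) = \Psi^{-1}(\O)$, so Theorem \ref{getallfamily} applies and identifies the $W$-constituents of $\bigoplus_{\Tsig \in \Psi^{-1}(\O)} (\Tsig \otimes \SF)$ as exactly the characters of $F$. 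Since $F$ is nonempty, this already forces $F$ to be good. For uniqueness, let $F'$ be any good family; by definition there exist $\Tsig \in \Min(\Omega_{\Tw})$ and $\sigma \in F'$ with $\sigma$ a $W$-constituent of $\Tsig \otimes \SF$. Theorem \ref{getallfamily} then forces $\sigma \in F$, and hence $F' = F$ since Lusztig families partition $\Irr(W)$.

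For the non-crystallographic case, $W$ is one of $D_{2n}$, $H_3$, or $H_4$, and the preceding three subsections have already established that each of these groups admits exactly one good family. The only remaining task is to match this good family against the Lusztig cuspidal family: for $D_{2n}$ the cuspidal family is $F_0$, for $H_3$ it is $\{\phi_{4,3},\phi_{4,4}\}$, and for $H_4$ it is the family of $a$-value $6$, all of which coincide with the good families identified in the proofs above. These identifications can be read off from Lusztig's classification as tabulated in \cite{Geck_Pfeiffer_Characters}.

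The main obstacle is more bookkeeping than substance. In the crystallographic case, the only non-trivial ingredient is recognising (via the case-by-case list after Conjecture \ref{gen_Saxl_conj_ver_II}) that the self-dual solvable orbit always coincides with the minimal one, after which Theorem \ref{getallfamily} does all the heavy lifting; in the non-crystallographic case, the substantive work has already been done in the uniqueness-of-good-family computations, and what remains is the cross-reference with Geck-Pfeiffer's tables to confirm cuspidality.
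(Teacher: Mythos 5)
Your crystallographic argument is sound and is essentially the paper's: the paper gets uniqueness directly from Remark \ref{cuspidal} plus Lemma \ref{controlgoodfamily} (if $F'$ is good then $\O \ge \O_{F'} \ge d(\O)=\O$ forces $\O_{F'}=\O$, hence $F'=F$), whereas you route the same facts through the stronger Theorem \ref{getallfamily}; both work, and your version has the small advantage of making the existence of a good family explicit rather than implicit. (Both you and the paper gloss over type $A$, which Remark \ref{cuspidal} excludes; there the hypothesis is vacuous for $n\ge 2$, so nothing is lost.)

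The genuine problem is your treatment of $H_3$ and $H_4$. You assert that their cuspidal families are $\{\phi_{4,3},\phi_{4,4}\}$ and the family of $a$-value $6$ respectively, and that this "can be read off" from \cite{Geck_Pfeiffer_Characters}. This contradicts the paper's own computation: reading the $\Tilde{j}$-induction data in Tables D.1 and D.2 of \cite{Geck_Pfeiffer_Characters}, one finds that $H_3$ and $H_4$ have \emph{no} cuspidal families at all --- every family, including the unique good one, arises by truncated induction from a proper parabolic subgroup. The correct way to dispose of these two cases is therefore to observe that the hypothesis of the proposition is never satisfied, so the statement holds vacuously --- not to match a (nonexistent) cuspidal family against the good family. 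Your error does not make the proposition false, but as written the proof rests on an unjustified factual claim that the relevant tables refute. The dihedral case is fine: the unique cuspidal family is $F_0$ by Lemma 8.5 of \cite{bellamy2016cuspidal}, and it coincides with the unique good family.
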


\begin{proof}
If $W$ is crystallographic, this follows directly from Remark \ref{cuspidal} and Lemma \ref{controlgoodfamily}. If $W=D_{2n}$ is dihedral, then there is a unique cuspidal family $F_0$ by Lemma 8.5 of \cite{bellamy2016cuspidal}. Finally, $\Tilde{j}$-induction for $H_3,H_4$ can be made explicit through tables D.1 and D.2 of \cite{Geck_Pfeiffer_Characters}, and we see that there are no cuspidal families for $H_3$ and $H_4$.
\end{proof}

\newpage
\appendix

\section{Appendix}

\subsection{Generalised Saxl Conjecture: Tables for Exceptional Types}

In this appendix for Conjecture \ref{gen_Saxl_conj_ver_II}, we show the calculations done by GAP 3 \cite{GAP3} and package CHEVIE \cite{CHEVIE} to prove it for the exceptional types. We use Carter's notation \cite{carter1993finite} for the Weyl groups, and Geck and Pfieffer's notation \cite{Geck_Pfeiffer_Characters} for $H_{3}$ and $H_{4}$.

\begin{proposition}[Generalised Saxl Conjecture for Exceptional Groups]
Let $\mathfrak{g}$ be an exceptional Lie algebra with Weyl group $W$. Let $\O_F$ be the unique solvable orbit of self-dual.
    
Then $(\bigoplus_{\sigma\in F} \sigma)^{\otimes 2}$ contains all irreducible characters of $W$.
\end{proposition}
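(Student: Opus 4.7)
The plan is a case-by-case verification for the five exceptional Weyl groups $W(G_2), W(F_4), W(E_6), W(E_7), W(E_8)$, leveraging the computer algebra system GAP 3 together with the CHEVIE package. The input for each case is already fixed by the earlier discussion: by Proposition 5.5 of \cite{CIUBOTARU20221}, the self-dual minimal solvable orbits are $G_2(a_1), F_4(a_3), D_4(a_1), A_4+A_1, E_8(a_7)$ respectively, and each determines via the Springer correspondence a special character $\sigma_{F}$ whose Lusztig family $F$ can be looked up in Carter's tables \cite{carter1993finite}. Thus the conjecture reduces, for each type, to checking that $(\bigoplus_{\sigma\in F}\sigma)^{\otimes 2}$ decomposes with strictly positive multiplicity on every irreducible character.

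The concrete steps I would perform are as follows. First, for each exceptional type, load the character table of $W$ and its list of Lusztig families via CHEVIE's built-in functions, identify the family $F$ attached to the prescribed orbit, and build the virtual character $\chi_F := \sum_{\sigma\in F}\chi_\sigma$. Second, compute the class-function square $\chi_F^2$ by pointwise multiplication on conjugacy classes, and project onto the basis of irreducible characters using the inner product. Third, read off the list of multiplicities and confirm that none is zero. For presentation purposes, one then tabulates the pair $(\phi, m_\phi)$ of each irreducible $\phi$ with its multiplicity $m_\phi = \langle \chi_F^2, \chi_\phi\rangle$, as is done in the body of the paper for $E_7$.

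The main obstacle is purely one of scale and bookkeeping rather than conceptual difficulty. For $G_2$, $F_4$, and $E_6$ the families and character tables are small enough that the calculation is essentially instantaneous and can be displayed in full. The $E_7$ case is the cleanest, since $F = \{\phi_{512,11},\phi_{512,12}\}$ has only two elements and $W(E_7)$ has $60$ irreducible characters, so the output table is compact. The serious computational load is $E_8$: here $W(E_8)$ has $112$ irreducible characters and the cuspidal family attached to $E_8(a_7)$ is quite large (it contains $17$ characters, as one reads from Lusztig's classification), so the virtual character $\chi_F$ has relatively high dimension and the squared character may have multiplicities of considerable size. Still, this is a finite computation well within CHEVIE's capacity, and the only risk is numerical or bookkeeping error; this can be mitigated by cross-checking the total degree $\chi_F^2(1) = (\sum_{\sigma\in F}\dim\sigma)^2$ against the sum $\sum_\phi m_\phi \dim\phi$, and by verifying $\langle \chi_F^2, 1\rangle = |F|$ using the fact that each $\sigma \in F$ is self-dual (which follows from the integrality of $W$-characters).

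Finally, I would package the output of all five computations as tables in the appendix, as the paper does, with one row per irreducible character. The proof then consists simply in pointing at each table and noting that every multiplicity column contains only positive integers, completing the case analysis and hence the proposition.
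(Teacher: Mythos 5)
Your proposal is correct and follows exactly the paper's approach: a case-by-case computation in GAP~3/CHEVIE of the multiplicities of all irreducible characters in $(\bigoplus_{\sigma\in F}\sigma)^{\otimes 2}$ for each exceptional type, with the results tabulated (the paper displays $E_7$ in the text and the rest in the appendix). Your suggested sanity checks, e.g.\ that $\langle\chi_F^{\otimes 2},\triv\rangle=|F|$, are consistent with the paper's tables and are a reasonable, if inessential, addition.
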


For character $\phi$ occurring with multiplicity $m$ in $(\bigoplus_{\sigma\in F} \sigma)^{\otimes 2}$ we list the data as: $\phi \; (m)$.\\

\par\noindent\rule{\textwidth}{0.2pt}\\

\renewcommand{\arraystretch}{1.2}

$G_2$ (6 characters), $\O_F=G_2(a_1)$, $F = \{ 
\phi_{1,3}', \;
\phi_{1,3}'', \;
\phi_{2,1}, \;
\phi_{2,2}
\}$

\begin{center}
\begin{tabular}{ c c c c c c }
$\phi_{1,0} \;(4)$
& $\phi_{1,6} \; (4)$
& $\phi_{1,3}' \; (2)$
& $\phi_{1,3}'' \; (2)$
& $\phi_{2,1} \; (6)$
& $\phi_{2,2} \; (6)$
\end{tabular}
\end{center}

\par\noindent\rule{\textwidth}{0.2pt}\\

$F_4$ (25 characters), $\O_F=F_4(a_3)$,

$F = \{
\phi_{12,4},\
\phi_{16,5}, \
\phi_{6,6}', \
\phi_{6,6}'', \
\phi_{9,6}', \
\phi_{9,6}'', \
\phi_{4,7}', \
\phi_{4,7}'', \ 
\phi_{4,8},  \
\phi_{1,12}', \ 
\phi_{1,12}''
\}$

\begin{center}
\begin{tabular}{ l l l l l l l l }
$\phi_{1,0}\ (11)$
& $\phi_{1,12}''\ (5)  $
& $\phi_{1,12}'\ (5)  $
& $\phi_{1,24}\ (11)  $
& $\phi_{2,4}''\ (9)  $
& $\phi_{2,16}'\ (9)  $
& $\phi_{2,4}'\ (9)  $
& $\phi_{2,16}''\ (9)  $
\\
$\phi_{4,8}\ (25)  $
& $\phi_{9,2}\ (46)  $
& $\phi_{9,6}''\ (40)  $
& $\phi_{9,6}'\ (40)  $
& $\phi_{9,10}\ (46)  $
& $\phi_{6,6}'\ (35)  $
& $\phi_{6,6}''\ (39)  $
& $\phi_{12,4}\ (57)  $
\\
$\phi_{4,1}\ (24)  $
& $\phi_{4,7}''\ (16)  $
& $\phi_{4,7}'\ (16)  $
& $\phi_{4,13}\ (24)  $
& $\phi_{8,3}''\ (28)  $
& $\phi_{8,9}'\ (28)  $
& $\phi_{8,3}'\ (28)  $
& $\phi_{8,9}''\ (28)  $
\\
$\phi_{16,5}\ (68)  $

\end{tabular}
\end{center}

\par\noindent\rule{\textwidth}{0.2pt}\\

$E_6$ (25 characters), $\O_F=D_4(a_1)$, $F = \{
\phi_{80,7},\;
\phi_{60,8},\;
\phi_{90,8},\;
\phi_{10,9},\;
\phi_{20,10}
\}$

\begin{center}
\begin{tabular}{ l l l l l l l }
$\phi_{1,0}\ (5)  $
& $\phi_{1,36}\ (5)  $
& $\phi_{10,9}\ (16)  $
& $\phi_{6,1}\ (12)  $
& $\phi_{6,25}\ (12) $
& $\phi_{20,10}\ (34)  $
& $\phi_{15,5}\ (27)  $
\\
$\phi_{15,17}\ (27)  $
& $\phi_{15,4}\ (21)  $
& $\phi_{15,16}\ (21)  $
& $\phi_{20,2}\ (28)  $
& $\phi_{20,20}\ (28)  $
& $\phi_{24,6}\ (36)  $
& $\phi_{24,12}\ (36)  $
\\
$\phi_{30,3}\ (38)  $
& $\phi_{30,15}\ (38)  $
& $\phi_{60,8}\ (81)  $
& $\phi_{80,7}\ (101)  $
& $\phi_{90,8}\ (119)  $
& $\phi_{60,5}\ (77)  $
& $\phi_{60,11}\ (77)  $
\\
$\phi_{64,4}\ (83)  $
& $\phi_{64,13}\ (83)  $
& $\phi_{81,6}\ (102)  $
& $\phi_{81,10}\ (102)  $

\end{tabular}
\end{center}

\newpage

\par\noindent\rule{\textwidth}{0.2pt}\\

\renewcommand{\arraystretch}{1.1}

$E_7$ (60 characters), $\O_F=A_4+A_1$, $F = \{
\phi_{512,11},\;
\phi_{512,12}
\}$

\begin{center}
\begin{tabular}{ l l l l l l }
$\phi_{1,0}\ (2)  $
& $\phi_{1,63}\ (2)  $
& $\phi_{7,46}\ (4)  $
& $\phi_{7,1}\ (4)  $
& $\phi_{15,28}\ (6)  $
& $\phi_{15,7}\ (6)  $
\\
$\phi_{21,6}\ (10)  $
& $\phi_{21,33}\ (10)  $
& $\phi_{21,36}\ (10)  $
& $\phi_{21,3}\ (10)  $
& $\phi_{27,2}\ (12)  $
& $\phi_{27,37}\ (12)  $
\\
$\phi_{35,22}\ (14)  $
& $\phi_{35,13}\ (14)  $
& $\phi_{35,4}\ (14)  $
& $\phi_{35,31}\ (14)  $
& $\phi_{56,30}\ (24)  $
& $\phi_{56,3}\ (24)  $
\\
$\phi_{70,18}\ (26)  $
& $\phi_{70,9}\ (26)  $
& $\phi_{84,12}\ (30)  $
& $\phi_{84,15}\ (30)  $
& $\phi_{105,26}\ (40)  $
& $\phi_{105,5}\ (40)  $
\\
$\phi_{105,6}\ (40)  $
& $\phi_{105,21}\ (40)  $
& $\phi_{105,12}\ (40)  $
& $\phi_{105,15}\ (40)  $
& $\phi_{120,4}\ (46)  $
& $\phi_{120,25}\ (46)  $
\\
$\phi_{168,6}\ (62)  $
& $\phi_{168,21}\ (62)  $
& $\phi_{189,10}\ (70)  $
& $\phi_{189,17}\ (70)  $
& $\phi_{189,22}\ (70)  $
& $\phi_{189,5}\ (70)  $
\\
$\phi_{189,20}\ (70)  $
& $\phi_{189,7}\ (70)  $
& $\phi_{210,6}\ (80)  $
& $\phi_{210,21}\ (80)  $
& $\phi_{210,10}\ (72)  $
& $\phi_{210,13}\ (72)  $
\\
$\phi_{216,16}\ (76)  $
& $\phi_{216,9}\ (76)  $
& $\phi_{280,18}\ (106)  $
& $\phi_{280,9}\ (106)  $
& $\phi_{280,8}\ (98)  $
& $\phi_{280,17}\ (98)  $
\\
$\phi_{315,16}\ (112)  $
& $\phi_{315,7}\ (112)  $
& $\phi_{336,14}\ (122)  $
& $\phi_{336,11}\ (122)  $
& $\phi_{378,14}\ (134)  $
& $\phi_{378,9}\ (134)  $
\\
$\phi_{405,8}\ (146)  $
& $\phi_{405,15}\ (146)  $
& $\phi_{420,10}\ (152)  $
& $\phi_{420,13}\ (152)  $
& $\phi_{512,12}\ (182) $
& $\phi_{512,11}\ (182)  $
\end{tabular}
\end{center}

\par\noindent\rule{\textwidth}{0.2pt}\\

$E_8$ (112 characters), $\O_F=E_8(a_7)$,

$F = \{
\phi_{4480,16},\;
\phi_{7168,17},\;
\phi_{3150,18},\;
\phi_{4200,18},\;
\phi_{4536,18},\;
\phi_{5670,18},\;
\phi_{1344,19},\;
\phi_{2016,19},\;
\phi_{5600,19},\;\\
\phi_{2688,20},\;
\phi_{420,20},\;
\phi_{1134,20},\;
\phi_{1400,20},\;
\phi_{1680,22},\;
\phi_{168,24},\;
\phi_{448,25},\;
\phi_{70,32}
\}$

\begin{center}
\begin{tabular}{ l l l l l l }
$\phi_{1,0}\ (17)  $
& $\phi_{1,120}\ (17)  $
& $\phi_{28,8}\ (154)  $
& $\phi_{28,68}\ (154)  $
& $\phi_{35,2}\ (161)  $
\\
$\phi_{35,74}\ (161)  $
& $\phi_{70,32}\ (328)  $
& $\phi_{50,8}\ (196)  $
& $\phi_{50,56}\ (196)  $
& $\phi_{84,4}\ (318)  $
\\
$\phi_{84,64}\ (318)  $
& $\phi_{168,24}\ (635)  $
& $\phi_{175,12}\ (587)  $
& $\phi_{175,36}\ (587)  $
& $\phi_{210,4}\ (747)  $
\\
$\phi_{210,52}\ (747)  $
& $\phi_{420,20}\ (1480)  $
& $\phi_{300,8}\ (1135)  $
& $\phi_{300,44}\ (1135)  $
& $\phi_{350,14}\ (1298)  $
\\
$\phi_{350,38}\ (1298)  $
& $\phi_{525,12}\ (1807)  $
& $\phi_{525,36}\ (1807)  $
& $\phi_{567,6}\ (1986)  $
& $\phi_{567,46}\ (1986)  $
\\
$\phi_{1134,20}\ (3974)  $
& $\phi_{700,16}\ (2301)  $
& $\phi_{700,28}\ (2301)  $
& $\phi_{700,6}\ (2390)  $
& $\phi_{700,42}\ (2390)  $
\\
$\phi_{1400,20}\ (4777)  $
& $\phi_{840,14}\ (2821)  $
& $\phi_{840,26}\ (2821)  $
& $\phi_{1680,22}\ (5642)  $
& $\phi_{972,12}\ (3282)  $
\\
$\phi_{972,32}\ (3282)  $
& $\phi_{1050,10}\ (3487)  $
& $\phi_{1050,34}\ (3487)  $
& $\phi_{2100,20}\ (6970)  $
& $\phi_{1344,8}\ (4517)  $
\\
$\phi_{1344,38}\ (4517)  $
& $\phi_{2688,20}\ (9009)  $
& $\phi_{1400,8}\ (4769)  $
& $\phi_{1400,32}\ (4769)  $
& $\phi_{1575,10}\ (5254)  $
\\
$\phi_{1575,34}\ (5254)  $
& $\phi_{3150,18}\ (10464)  $
& $\phi_{2100,16}\ (6940)  $
& $\phi_{2100,28}\ (6940)  $
& $\phi_{4200,18}\ (13833)  $
\\
$\phi_{2240,10}\ (7353)  $
& $\phi_{2240,28}\ (7353)  $
& $\phi_{4480,16}\ (14695)  $
& $\phi_{2268,10}\ (7514)  $
& $\phi_{2268,30}\ (7514)  $
\\
$\phi_{4536,18}\ (15009)  $
& $\phi_{2835,14}\ (9270)  $
& $\phi_{2835,22}\ (9270)  $
& $\phi_{5670,18}\ (18566)  $
& $\phi_{3200,16}\ (10482)  $
\\
$\phi_{3200,22}\ (10482)  $
& $\phi_{4096,12}\ (13688)  $
& $\phi_{4096,26}\ (13688)  $
& $\phi_{4200,12}\ (13836)  $
& $\phi_{4200,24}\ (13836)  $
\\
$\phi_{6075,14}\ (19966)  $
& $\phi_{6075,22}\ (19966)  $
& $\phi_{8,1}\ (52)  $
& $\phi_{8,91}\ (52)  $
& $\phi_{56,19}\ (236)  $
\\
$\phi_{56,49}\ (236)  $
& $\phi_{112,3}\ (360)  $
& $\phi_{112,63}\ (360)  $
& $\phi_{160,7}\ (562)  $
& $\phi_{160,55}\ (562)  $
\\
$\phi_{448,25}\ (1436)  $
& $\phi_{400,7}\ (1174)  $
& $\phi_{400,43}\ (1174)  $
& $\phi_{448,9}\ (1260)  $
& $\phi_{448,39}\ (1260)  $
\\
$\phi_{560,5}\ (1642)  $
& $\phi_{560,47}\ (1642)  $
& $\phi_{1344,19}\ (3912)  $
& $\phi_{840,13}\ (2540)  $
& $\phi_{840,31}\ (2540)  $
\\
$\phi_{1008,9}\ (2894)  $
& $\phi_{1008,39}\ (2894)  $
& $\phi_{2016,19}\ (5698)  $
& $\phi_{1296,13}\ (3772)  $
& $\phi_{1296,33}\ (3772)  $
\\
$\phi_{1400,11}\ (3886)  $
& $\phi_{1400,29}\ (3886)  $
& $\phi_{1400,7}\ (4098)  $
& $\phi_{1400,37}\ (4098)  $
& $\phi_{2400,17}\ (6748)  $
\\
$\phi_{2400,23}\ (6748)  $
& $\phi_{2800,13}\ (8010)  $
& $\phi_{2800,25}\ (8010)  $
& $\phi_{5600,19}\ (15946)  $
& $\phi_{3240,9}\ (9184)  $
\\
$\phi_{3240,31}\ (9184)  $
& $\phi_{3360,13}\ (9524)  $
& $\phi_{3360,25}\ (9524)  $
& $\phi_{7168,17}\ (20036)  $
& $\phi_{4096,11}\ (11496)  $
\\
$\phi_{4096,27}\ (11496)  $
& $\phi_{4200,15}\ (11676)  $
& $\phi_{4200,21}\ (11676)  $
& $\phi_{4536,13}\ (12728)  $
& $\phi_{4536,23}\ (12728)  $
\\
$\phi_{5600,15}\ (15710)  $
& $\phi_{5600,21}\ (15710)  $
\end{tabular}
\end{center}

\subsection{Characterisation of Cuspidal Families: Tables for Exceptional Types}

In this appendix for Theorem \ref{getallfamily}, we show the calculations done by GAP 3 \cite{GAP3} and package CHEVIE \cite{CHEVIE} to prove it for the exceptional types. We use Carter's notation \cite{carter1993finite} for actual representations and Morris' notation \cite{Morris_Exceptional} for projective representations.

\begin{proposition}
Let $\mathfrak{g}$ be an exceptional Lie algebra with Weyl group $W$. Suppose $\mathfrak{g}$ has a solvable nilpotent orbit $\O_F$ corresponding to Lusztig family $F$ with the property that $d(\O_F)=\O_F$. Let $\Psi^{-1}(\O_F)$ be the set of all irreducible genuine representations of $\Tw$ corresponding to $\O_F$. Then, the $W$-constituents of $\bigoplus\limits_{\widetilde{\sigma}\in \Psi^{-1}(\O_F)}  (\widetilde{\sigma}\otimes \SF)$ are precisely the $W$-types in family $F$.
\end{proposition}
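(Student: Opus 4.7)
The statement is an equality of two sets of irreducible $W$-characters, so the proof splits into two inclusions. The inclusion ``constituents of the tensor product lie in $F$'' is immediate from Corollary \ref{enhancedwtype}. Indeed, any $\widetilde{\sigma}\in \Psi^{-1}(\O_F)$ satisfies $\Psi(\widetilde{\sigma})=\O_F$, and because $\O_F=d(\O_F)$ the chain of inequalities $\O_F\ge \O_{F'}\ge d(\O_F)$ supplied by Corollary \ref{enhancedwtype} collapses to $\O_{F'}=\O_F$, forcing $F'=F$. Hence every $W$-constituent of $\bigoplus_{\widetilde{\sigma}\in\Psi^{-1}(\O_F)}(\widetilde{\sigma}\otimes\SF)$ already belongs to $F$.

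The reverse inclusion is the substantive content, and the plan is an exhaustive case-by-case verification for $G_2, F_4, E_6, E_7, E_8$. For each type I would proceed in three steps. First, read off the unique self-dual minimal solvable orbit $\O_F$ from Proposition 5.5 of \cite{CIUBOTARU20221} (namely $G_2(a_1), F_4(a_3), D_4(a_1), A_4+A_1, E_8(a_7)$) and extract the Lusztig family $F$ in Carter's labelling from the tables in \cite{carter1993finite}. Second, determine the fibre $\Psi^{-1}(\O_F)$: using Theorem \ref{generalisedSpringer}(1) together with the monotonicity recorded in Remark \ref{lengthofh}, the fibre is identified with the set of genuine $\widetilde{W}$-irreducibles whose $\Omega_{\widetilde{W}}$-eigenvalue equals $\langle h_e,h_e\rangle$ for $e\in\O_F$, and this data is tabulated (in Morris' notation \cite{Morris_Exceptional}) for every exceptional type. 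Third, evaluate the class function $\bigoplus_{\widetilde{\sigma}\in\Psi^{-1}(\O_F)}(\widetilde{\sigma}\otimes\SF)$ in GAP 3 with the CHEVIE package, multiplying the known spinor character against each of the listed $\widetilde{\sigma}$ and taking inner products against the character table of $W$ to read off multiplicities. Comparing the resulting multiset of $W$-constituents against the explicit list of characters in $F$ yields the equality.

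The $G_2$ computation can be carried out by hand since $\Psi^{-1}(G_2(a_1))=\{2_s,2_{ss},2_{sss}\}$ and $F$ contains only four characters; this case is presented in the body to illustrate the method. The main obstacle is not conceptual but computational scale: for $E_8$ the family attached to $E_8(a_7)$ contains $17$ characters and $\Psi^{-1}(\O_F)$ is correspondingly large, so the verification produces bulky tables. These tables, together with the analogous output for $F_4, E_6, E_7$, are recorded in the appendix and serve as the explicit check completing the proof.
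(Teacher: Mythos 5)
Your proposal is correct and matches the paper's approach: the containment of all constituents in $F$ is exactly the observation the paper makes via Corollary \ref{enhancedwtype} before stating Theorem \ref{getallfamily}, and the reverse inclusion is handled, as in the paper, by an explicit case-by-case GAP 3/CHEVIE computation with the fibres $\Psi^{-1}(\O_F)$ and resulting multiplicity tables recorded for each exceptional type.
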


For character $\phi$ occurring with multiplicity $m$ in $\bigoplus\limits_{\widetilde{\sigma}\in \Psi^{-1}(\O_F)}  (\widetilde{\sigma}\otimes \SF)$, we list the data as: $\phi \; (m)$. We omit the characters that don't appear in the decomposition.\\

\par\noindent\rule{\textwidth}{0.2pt}\\

\renewcommand{\arraystretch}{1.2}

$G_2$, $\O_F=G_2(a_1)$, $\Psi^{-1}(\O_F)=\{2_s,2_{ss},2_{sss}\}$

\begin{center}
\begin{tabular}{ c c c c }
$\phi_{1,3}' \;(1)$
& $\phi_{1,3}'' \; (1)$
& $\phi_{2,1} \; (1)$
& $\phi_{2,2} \; (2)$
\end{tabular}
\end{center}

\par\noindent\rule{\textwidth}{0.2pt}\\

\renewcommand{\arraystretch}{1.2}

$F_4$, $\O_F=F_4(a_3)$, $\Psi^{-1}(\O_F)=\{8_{ss},12_{ss},8_s,4_{ss}\}$ 

\begin{center}
\begin{tabular}{ c c c c c c}
$\phi_{12,4} \;(1)$
& $\phi_{16,5} \; (3)$
& $\phi_{6,6}' \; (3)$
& $\phi_{6,6}'' \; (1)$
& $\phi_{9,6}' \; (1)$
& $\phi_{9,6}'' \; (1)$ \\
$\phi_{4,7}' \; (2)$
& $\phi_{4,7}'' \; (2)$
& $\phi_{4,8} \; (2)$
& $\phi_{1,12}' \; (1)$
& $\phi_{1,12}'' \; (1)$
\end{tabular}
\end{center}

\par\noindent\rule{\textwidth}{0.2pt}\\

\renewcommand{\arraystretch}{1.2}

$E_6$, $\O_F=D_4(a_1)$, $\Psi^{-1}(\O_F)=\{40_{ss},20_s\}$

\begin{center}
\begin{tabular}{ c c c c c}
$\phi_{80,7} \;(1)$
& $\phi_{60,8} \; (3)$
& $\phi_{90,8} \; (2)$
& $\phi_{10,9} \; (2)$
& $\phi_{20,10} \; (1)$
\end{tabular}
\end{center}

\par\noindent\rule{\textwidth}{0.2pt}\\

\renewcommand{\arraystretch}{1.1}

$E_7$, $\O_F=A_4+A_1$, $\Psi^{-1}(\O_F)=\{60_s,60_{ss}\}$ 

\begin{center}
\begin{tabular}{ c c }
$\phi_{512,11} \;(2)$
& $\phi_{512,12} \; (2)$
\end{tabular}
\end{center}

\par\noindent\rule{\textwidth}{0.2pt}\\

\renewcommand{\arraystretch}{1.1}

$E_8$, $\O_F=E_8(a_7)$, $\Psi^{-1}(\O_F)=\{224_s,1344_s,1120_s,2016_{ss},2016_{sss},896_s\}$ 

\begin{center}
\begin{tabular}{ c c c c c c}
$\phi_{70,32} \;(1)$
& $\phi_{168,24} \; (3)$
& $\phi_{420,20} \; (5)$
& $\phi_{1134,20} \; (4)$
& $\phi_{1400,20} \; (1)$
& $\phi_{1680,22} \; (1)$ \\
$\phi_{2688,20} \; (4)$
& $\phi_{3150,18} \; (4)$
& $\phi_{4200,18} \; (3)$
& $\phi_{4460,16} \; (1)$
& $\phi_{4536,18} \; (1)$
& $\phi_{5670,18} \; (1)$ \\
$\phi_{448,25} \;(2)$
& $\phi_{1344,19} \; (6)$
& $\phi_{2016,19} \; (4)$
& $\phi_{5600,19} \; (4)$
& $\phi_{7168,17} \; (3)$
\end{tabular}
\end{center}

\newpage

\printbibliography

\end{document}